\documentclass[11pt]{article}

\PassOptionsToPackage{sort&compress}{natbib}
\usepackage{mystyle}
\usepackage{mathtools}
\usepackage{tabularx}
\usepackage{booktabs}
\usepackage{placeins}
\usepackage{comment}
\usepackage{authblk}

\usepackage[top=2.54cm,left=3cm,right=3cm,bottom=2.54cm]{geometry}

\title{Finite-sample nonparametric mean tests: \\
  Leave-one-out duality and asymptotic optimality}
\author[1]{Yifan Zhu}
\author[1,2]{John C. Duchi}
\affil{Departments of $^1$Electrical Engineering and $^2$Statistics, Stanford University\\\texttt{\{zhuyifan,jduchi\}@stanford.edu}}
\date{September 3, 2026}

\usepackage{macros}

\begin{document}

\maketitle

\begin{abstract}
  We study finite-sample valid tests of the one-sided mean hypothesis
  \(H_0:\mu\leq 1\) against \(H_1:\mu>1\) for nonnegative random variables.
  To do so, we develop a leave-one-out dual certificate framework,
  where certain
  pointwise inequalities imply \(p\)-value validity under the conditional
  mean null \(\Eb[X_i\mid \bm{X}_{-i}]\leq 1\),
  and which also gives conditions that allow combining dual
  certificates for \(p\)-values to show that their pointwise
  minimum is also a valid \(p\)-value.
  The framework proves finite-sample validity of
  \citeauthor{WangZh03}'s nonparametric
  likelihood-ratio statistic \(\nplr\), yields a new
  \(p\)-value $\pBinPlus$ extending the Clopper-Pearson binomial
  test to general nonnegative random variables,
  and shows that the pointwise minimum
  \(\min\{\nplr,\pBinPlus\}\) is itself a valid and more powerful
  \(p\)-value.
  We establish sharp optimality results for such testing problems in two
  regimes: both \(\nplr\) and \(\pBinPlus\) attain a
  universal detectability for the null $H_0$ without moment or tail
  assumptions, and \(\pBinPlus\) attains a nonparametric power
  lower bound under \(n^{-1/2}\)-local alternatives to $H_0$.
  Efficient algorithms and numerical experiments demonstrate substantial
  finite-sample power gains over existing valid methods.
\end{abstract}

\section{Introduction}

The mean is among the most basic functionals of a probability distribution, but
its distribution-free inference is impossible without some restriction on the
underlying distribution.
Indeed, the classical results of \citet{BahadurSa56} imply that, over the
class of all distributions with a finite mean, any uniformly
level-\(\alpha\) test of a one-sided hypothesis about the mean has power at
most \(\alpha\) at every alternative.
The non-robustness of the mean makes this almost expected: a small
probability mass placed far in the unfavorable direction significantly
changes the mean with negligible effect on any finite-sample statistic.

Imposing one-sided bounds on the support of the distribution allows
circumventing this impossibility.
For lower confidence bounds, this corresponds to a known lower endpoint,
which by shifting we may take to be zero.
Accordingly, let \(X_1,\ldots,X_n\) be i.i.d.\ observations from an unknown
distribution on \(\Rb_+\), with mean \(\mu=\Eb X_i\), and consider testing
\begin{align}
  \label{eq:intro-hypothesis}
  H_0:\mu\le 1
  \qquad\text{versus}\qquad
  H_1:\mu>1.
\end{align}
By inversion, tests of~\eqref{eq:intro-hypothesis} yield lower confidence
bounds for the mean.

Such data arise naturally in applications involving insurance losses, medical
expenditures, service and repair times, transaction values, revenue per
customer, and computational or energy costs.
For example, one may wish to certify that mean revenue exceeds a launch
threshold, or that mean expenditure does not exceed a prescribed budget.
In these settings, a credible upper bound on the observations, their variance,
or a higher moment may be unavailable, particularly when the distribution is
highly skewed or heavy-tailed.

Despite the apparent simplicity of \eqref{eq:intro-hypothesis}, available
tools for generating finite-sample valid \(p\)-values are limited.
Standard nonparametric tests~\citep{LehmannRo05}, such as permutation
and rank tests, do not directly apply, and normal and Studentized
approximations and bootstrap procedures provide
(at most) asymptotic validity.
Concentration-based methods commonly require known bounded support, variance
control, or stronger moment or tail assumptions
\citep{Hoeffding63,Bentkus04,MaurerPo09,Bernstein24,RomanoWo00,AusternMa22},
and consequently do not directly address observations known only to
be nonnegative.

\subsection{A leave-one-out dual framework}

We introduce a systematic method for generating and proving finite-sample
valid \(p\)-values.
Recall that a statistic \(T:\Rb_+^n\to[0,1]\) is a \(p\)-value under a null
class \(\mathcal P_0\) if for all \(\alpha\in(0,1)\),
\begin{equation*}
  \sup_{P\in\mathcal P_0} P\{T(\bm{X})\le\alpha\} \le \alpha.
\end{equation*}
Letting $\mc{P}(\mc{X})$ denote the class of probability measures on $\mc{X}$,
we typically consider the leave-one-out null
\begin{align}\label{eq:loo-null}
  \nullCond^n
  :=
  \left\{
    P \in \mathcal{P}(\Rb_+^n):
  \Eb_P[X_i\mid \bm{X}_{-i}]\le 1\ \text{a.s. for }i = 1, \ldots, n
  \right\}
\end{align}
for the test~\eqref{eq:intro-hypothesis},
where $\bm{X}_{-i}$ indicates $\bm{X}$ with its $i$th entry removed,
and
which includes the i.i.d.\ null as a special case.
Then $T$ is a $p$-value for the null~\eqref{eq:loo-null}
if and only if the value of the optimization problem
\begin{equation}
  \label{eqn:optimization-problem}
  \begin{array}{rl} \maximize & \Pb_{\Xb \sim P}
    \left\{T(\Xb) \le \alpha\right\} \\
    \subjectto & P \in \nullCond^n
  \end{array}
\end{equation}
is at most $\alpha$.

The Lagrangian dual of the optimization
problem~\eqref{eqn:optimization-problem} always upper bounds its optimal
value, so we proceed (asserting no strong duality) to develop this.
Introducing nonnegative Lagrange
multipliers $\lambda_\alpha^{(i)} : \Rb_+^{n-1} \to \Rb_+$,
we have the Lagrangian
\begin{equation*}
  \mathcal{L}(P, \lambda_\alpha^{(1)},\ldots,\lambda_\alpha^{(n)})
  :=
  \int_{\Rb_+^n}
  \left[
    \one\{T(\bm{x})\le \alpha\}
    -
    \sum_{i=1}^n
    \lambda_\alpha^{(i)}(\bm{x}_{-i})(x_i-1)
  \right]
  \,dP(\bm{x}).
\end{equation*}
For feasible \(P \in \nullCond^n\) and nonnegative
$\lambda_{\alpha}^{(i)}$, an immediate weak duality calculation gives
\begin{equation*}
  \Pb_P\{T(\bm{X})\le\alpha\}\le \mathcal{L}(P,\lambda_\alpha^{(1)},\ldots,\lambda_\alpha^{(n)}).
\end{equation*}
If the integrand $\one\{T(\bx) \le \alpha\} -
\sum_{i=1}^n \lambda_{\alpha}^{(i)}(\bx_{-i})(x_i - 1) \le \alpha$
for all $\bx \in \Rb_+^n$,
then evidently $\mc{L}(P, \lambda^{(1)}_\alpha, \ldots,
\lambda^{(n)}_\alpha) \le \alpha$ for all probability measures $P$.
Thus, to show that \( \Pb\{T(\bm{X})\le\alpha\}\le \alpha \) for all
\(P\in\nullCond^n\), it suffices to find nonnegative
\(\lambda_\alpha^{(i)}:\Rb_+^{n-1}\to\Rb_+\) such that
\begin{align}
  \tag{\textsc{cert}}
  \alpha +
  \sum_{i=1}^n \lambda_\alpha^{(i)}(\bm{x}_{-i})(x_i-1)
  \ge
  \one\{T(\bm{x})\le \alpha\}
  \qquad\text{for all }\bm{x}\in\Rb_+^n.
  \label{eq:intro-dual-certificate}
\end{align}
This reduces the optimization problem~\eqref{eqn:optimization-problem},
over all feasible distributions, to finding
functions that satisfy pointwise inequalities.
We give a rigorous derivation in \Cref{lem:dual-certificate}.

This dual viewpoint gives a way to certify that the minimum of
certain pairs of \( p \)-values remains a $p$-value.
In \Cref{thm:general-switch} in the sequel, we show how under certain
conditions, the dual certificates~\eqref{eq:intro-dual-certificate}
of two \(p\)-values $S$ and $T$ can combine
to demonstrate that $\min\{S, T\}$ is a valid $p$-value.

\subsection{Main results}

We apply this framework to prove the finite-sample validity of the one-sided
nonparametric likelihood-ratio statistic that \citet{WangZh03} study for the
mean of a nonnegative distribution.
Let $L(Q; \bx) = \prod_{i = 1}^n Q(\{x_i\})$ be the likelihood
$Q$ assigns to an independent sample $\bx$.
Then building on empirical likelihood~\citep{ThomasGr75,Owen88,Owen90},
\citet[Thm.~4.1]{WangZh03} derive the explicit representation
\begin{equation}
  \begin{split}
    \nplr(\bm{x}) & :=
    \frac{\sup\left\{L(Q;\bm{x}): Q\in
      \mc{P}(\Rb_+),\ \int z dQ(z)\le 1 \right\} }{
      \sup\left\{ L(Q;\bm{x}): Q\in\mc{P}(\Rb_+) \right\} } \\
    & \, = \min_{t \in [0, 1]} \prod_{i = 1}^n
    \left(1 + t (x_i - 1)\right)^{-1}
  \end{split}
  \label{eqn:nplr}
\end{equation}
for the nonparametric likelihood ratio,
and they conjecture that $\nplr$ is a finite-sample valid \(p\)-value
for i.i.d.\ data.
(Here, the numerator optimizes over all
distributions on \(\Rb_+\) with mean at most one, while the denominator
optimizes over all distributions on \(\Rb_+\).)
Throughout the paper, we frequently use the product
\begin{align*}
  \prodPoly_{\bm{x}}(t)
  :=
  \prod_{i=1}^n\{1-t+tx_i\}.
\end{align*}
The notation reflects its e-value interpretation: if \(X_1,\ldots,X_n\) are
independent e-values, then, for each
fixed \(t\in[0,1]\), \(\prodPoly_{\bm{X}}(t)\) is itself an e-value under the
global null.
The maximization over \(t\) in \(\nplr\) is data-adaptive, however,
and \(\max_{t\in[0,1]}\prodPoly_{\bm{X}}(t)\) is not an e-value.
In work concurrent to and independent of the
current paper, \citet{MingShWa26} verify the finite-sample validity of \(\nplr\)
using a different approach from the duality framework we develop.

In Section~\ref{sec:binplus},
we introduce a generalization $\pBinPlus$ of \citeauthor{ClopperPe34}'s
exact binomial $p$-value~[\citeyear{ClopperPe34}],
though we defer the precise definition
to Proposition~\ref{prop:binplus-ratio-monotonicity}.
A leave-one-out dual certificate~\eqref{eq:intro-dual-certificate} once
again proves its finite-sample validity.
The combination principle shows that the pointwise minimum
\(\min\{\nplr,\pBinPlus\}\) is itself a valid \(p\)-value.

Our second set of contributions, in Section~\ref{sec:asymptotics},
revolves around the asymptotic optimality of
these statistics as the sample size \(n\to\infty\).
For this purpose, instead of the leave-one-out null~\eqref{eq:loo-null},
we instead compare against tests valid under the
i.i.d.\ null
\begin{align}
  \label{eq:iid-null}
  \nullIid^n
  :=
  \left\{
  P^n :
  P \in \mc{P}(\Rb_+), ~ \Eb_P[X] \le 1
  \right\}.
\end{align}
This gives a stronger benchmark: because procedures need only be valid over
null~\eqref{eq:iid-null} rather than the larger class~\eqref{eq:loo-null},
there are more admissible competitors.
We establish sharp optimality results in two regimes
for $\Xb \sim Q_n^n$ for an unknown sequence of distribution $Q_n$.
First, without moment or tail assumptions, in
Section~\ref{sec:asymptotics-hellinger} we show how the squared Hellinger
distance $H^2_n$ of $Q_n$ to the null class $\nullIid$ determines
testability of the composite null~\eqref{eq:intro-hypothesis}.
Informally, Theorem~\ref{thm:np-hellinger-scale} shows that if $H_n^2 \ll 1/n$
as $n \to \infty$, then no level $\alpha$ test attains asymptotic power more
than $\alpha$, while if $H_n^2 \gg 1/n$, then there exist tests attaining
asymptotic power $1$.
Corollary~\ref{corollary:statistics-attain-hellinger} shows that the
statistics \(\nplr\), \(\pBinPlus\), and their pointwise minimum
\(\min\{\nplr,\pBinPlus\}\) attain this.
To investigate finer optimality properties, in
Section~\ref{sec:asymptotics-regular} we consider nonparametric local
alternatives approaching $\nullIid$ at a $1/\sqrt{n}$ rate~\citep[see,
  e.g.,][Chapter~25]{VanDerVaart98} rather than the gross Hellinger-distance
bounds.
In this case, more-or-less standard results establish that the optimal
asymptotic (nonparametric) power coincides with that in a Gaussian
sample-mean experiment.
These results highlight the necessity of the new statistic \(\pBinPlus\):
it, and hence \(\min\{\nplr,\pBinPlus\}\), attain this optimal power, while
\(\nplr\) does not.

Finally, we develop efficient algorithms for computing the
proposed statistics and evaluate their finite-sample power.
Because our optimality results use the i.i.d.\ null as the benchmark, the
experiments likewise compare against methods calibrated for i.i.d.\ sampling.
Across a range of distributions, $\pBinPlus$ and
\(\min\{\nplr,\pBinPlus\}\) substantially improve the power over existing
methods, including CDF-band methods
\citep{Anderson69,Massart90}, betting based methods \citep{Waudby-SmithRa24},
and the empirical Bernstein method \citet{Waudby-SmithRa24} discuss.

\subsection{Related work}

Distribution-free lower confidence bounds for nonnegative means have a
long history; see, for example,
\citet{Kaplan87,WangZh03,Gaffke05}.
The work most directly connected to ours is \citet{WangZh03}, which derives
the explicit representation~\eqref{eqn:nplr} of the nonparametric
likelihood-ratio statistic \(\nplr\), and \citet{Gaffke05}, who studied its
finite-sample validity.
The latter establishes special cases and asymptotic results but leaves
validity for arbitrary sample size unresolved.
Our proof resolves this question under the leave-one-out null
\eqref{eq:loo-null}.
Concurrent independent work by \citet*{MingShWa26} proves validity under the
leave-one-out null through a different representation.

The broader finite-sample literature on nonparametric mean inference is much
larger, but many standard tools rely on known bounded support or variance
control
\citep{Hoeffding63,Bentkus04,MaurerPo09,Bernstein24,RomanoWo00,AusternMa22}.
These results therefore do not directly address the
present setting, where we assume only nonnegative observations and allow
unbounded support.

A major exception is the betting and e-value approach
\citep{Waudby-SmithRa24}, which gives valid one-sided tests under
nonnegativity.
This method is valid under the sequential null
\(\Eb[X_i\mid \bm{X}_{1:i-1}]\le1\).
Recalling the classes~\eqref{eq:loo-null} and~\eqref{eq:iid-null},
we can also introduce the independent
and sequential classes
$\mc{P}_{\textup{ind}}^n = \{P_1 \times \cdots \times P_n
: \Eb_{P_i}[X] \le 1, P_i \in \mc{P}(\Rb_+)\}$
and $\mc{P}_{\textup{seq}}^n
= \{P \in \mc{P}(\Rb_+^n) : \Eb_P[X_i \mid \Xb_{1:i-1}] \le 1\}$.
Then the inclusions
\begin{align*}
  \nullIid^n \subset \mc{P}_{\textup{ind}}^n
  \subset \nullCond^n \subset
  \mc{P}_{\textup{seq}}^n
\end{align*}
show that \citeauthor{Waudby-SmithRa24}'s results apply to the larger
sequential model, while our validity results apply to the class
$\nullCond^n$, and the optimality guarantees to $\nullIid^n$.
This distinction is important, because the
multipliers~\eqref{eq:intro-dual-certificate} may use observations on both
sides of \(X_i\), in distinction from predictable betting strategies,
whose multipliers $\lambda^{(i)}$ are
$\Xb_{1:i-1}$-measurable.

Several related works study notions of optimality complementary to ours.
For bounded means, \citet{RomanoWo00} and \citet{AusternMa22} study
asymptotic efficiency through the $\sqrt{n}$-scaled width of confidence
intervals at fixed confidence level, while \citet{ShekharRa23} analyze
betting-based confidence sets using first-order asymptotic width and
nonasymptotic width lower bounds.
\citet{DeepBaJu25} characterize limiting confidence-interval
width in the joint large-sample, high-confidence regime
$N_\alpha\to\infty$ and $\alpha\downarrow0$, indexed by
$N_\alpha/\log(1/\alpha)$.
\citet{AgrawalRa25} study power-one sequential tests, with optimality measured
by expected stopping time in small-error and small-separation regimes.
In contrast, we keep the significance level fixed and study rejection power at
deterministic sample sizes along sequences of alternatives.
In Appendix~\ref{sec:existing-statistics}, we provide somewhat more
discussion of existing statistics.

\subsection{Roadmap}

The paper proceeds as follows.
\Cref{sec:dual-certificates} formalizes the leave-one-out dual certificate
framework.
\Cref{sec:nplr} applies the framework to prove the validity of the
nonparametric likelihood ratio \(\nplr\).
\Cref{sec:binplus} constructs the generalized binomial \(p\)-value
\(\pBinPlus\) and proves its validity.
\Cref{sec:dual-switching} develops a dual-switching principle and applies it
to certify \(\min\{\nplr,\pBinPlus\}\).
In \Cref{sec:asymptotics}, we establish the sharp local asymptotic power results
--- \Cref{sec:asymptotics-hellinger} handles the fully nonparametric regime, and
\Cref{sec:asymptotics-regular} handles the regular local regime.
We present experiment results in \Cref{sec:experiments}.
The appendix contains additional technical details, including proofs and
algorithms.

\section{Leave-one-out dual certificates}
\label{sec:dual-certificates}

We first formalize the leave-one-out duality framework the
introduction introduces, providing an integrability result
to guarantee finiteness.
We will always assume functions are Borel measurable, and let
\([n]:=\{1,\ldots,n\}\).

\begin{lemma}[Leave-one-out dual certificate]
  \label{lem:dual-certificate}
  Let \(\alpha\in(0,1)\) and \(T:\Rb_+^n\to[0,1]\) and
  $\lambda^{(i)}_\alpha : \Rb_+^{n-1} \to \Rb_+$, $i \in [n]$,
  be nonnegative and measurable.
  Assume $T$ and $\lambda^{(i)}_\alpha$
  satisfy~\eqref{eq:intro-dual-certificate}, i.e.,
  \begin{align}
    \dual^{\lambda}_{\alpha}(\bm{x}) := 
    \alpha+
    \sum_{i=1}^n \lambda_\alpha^{(i)}(\bm{x}_{-i})(x_i-1)
    \ge
    \one\{T(\bm{x})\le \alpha\}
    \qquad\text{for }\bm{x}\in\Rb_+^n.
    \label{eq:dual-certificate}
  \end{align}
  Then for all \(P\in\nullCond^n\),
  $\dual^\lambda_\alpha(\Xb)$ is integrable,
  $\Eb_P[\dual^{\lambda}_\alpha(\Xb)] \le \alpha$,
  \begin{align*}
    \Eb_P[\lambda_\alpha^{(i)}(\bm{X}_{-i})]\le \alpha  \text{ for } i\in[n], 
    ~~~ \mbox{and} ~~~
    \Pb_P\{T(\bm{X})\le\alpha\}\le\alpha.
  \end{align*}
\end{lemma}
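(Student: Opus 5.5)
The plan is to prove the expectation bound $\Eb_P[\dual^\lambda_\alpha(\Xb)]\le\alpha$ first and read off the other conclusions from it. By~\eqref{eq:dual-certificate}, $\dual^\lambda_\alpha\ge\one\{T\le\alpha\}\ge 0$ pointwise. Once $\dual^\lambda_\alpha(\Xb)$ is integrable with mean at most $\alpha$, monotonicity of expectation gives
\begin{align*}
  \Pb_P\{T(\Xb)\le\alpha\}\le \Eb_P[\dual^\lambda_\alpha(\Xb)]\le\alpha.
\end{align*}
To compute the mean, split $\dual^\lambda_\alpha=A-B$ with
\begin{align*}
  A:=\alpha+\sum_{i}\lambda^{(i)}_\alpha(\bm{x}_{-i})\,x_i,
  \qquad
  B:=\sum_i\lambda^{(i)}_\alpha(\bm{x}_{-i}).
\end{align*}
Both are nonnegative, and $A\ge B$ pointwise. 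For nonnegative variables the tower property holds in $[0,\infty]$ without any integrability assumption. Since $\Eb_P[X_i\mid\Xb_{-i}]\le 1$ and $\lambda^{(i)}_\alpha(\Xb_{-i})$ is $\Xb_{-i}$-measurable, this gives
\begin{align*}
  \Eb_P[\lambda^{(i)}_\alpha(\Xb_{-i})X_i]=\Eb_P\big[\lambda^{(i)}_\alpha(\Xb_{-i})\,\Eb_P[X_i\mid\Xb_{-i}]\big]\le\Eb_P[\lambda^{(i)}_\alpha(\Xb_{-i})].
\end{align*}
Summing over $i$ yields $\Eb A\le\alpha+\Eb B$. Hence, if $\Eb B<\infty$, then $A$ is integrable, $\dual^\lambda_\alpha(\Xb)$ is integrable, and $\Eb\dual^\lambda_\alpha=\Eb A-\Eb B\le\alpha$.

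The main obstacle is therefore to show $\Eb_P[\lambda^{(i)}_\alpha(\Xb_{-i})]<\infty$, and in fact $\le\alpha$ for each $i$. Write $Y_i:=\lambda^{(i)}_\alpha(\Xb_{-i})(1-X_i)$. The certificate says $\sum_iY_i\le\alpha$ pointwise. By the display above, each $Y_i$ has conditional mean $\lambda^{(i)}_\alpha(1-\Eb[X_i\mid\Xb_{-i}])\ge 0$ given $\Xb_{-i}$, so $\Eb[Y_i\mid\Xb_{-i}]\ge0$. Note also that $Y_i\ge-\lambda^{(i)}_\alpha X_i$ and $Y_i\le\lambda^{(i)}_\alpha$.

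Since $Y_i=\alpha-\dual^\lambda_\alpha+\sum_{j\ne i}\lambda^{(j)}_\alpha(\Xb_{-j})(X_j-1)$, the expectation bound itself would give
\begin{align*}
  \Eb[Y_i]\le \alpha+\sum_{j\neq i}\Eb\big[\lambda^{(j)}_\alpha(X_j-1)\big]\le\alpha,
\end{align*}
but this argument is circular. The first idea to break the circularity is to localise. Take a cutoff $M$ and the event on which every $\lambda^{(j)}_\alpha(\Xb_{-j})\le M$ and every $X_j\le M$. On that event all terms are bounded, so the conditional-mean argument can be run with bounded integrands, and letting $M\to\infty$ with monotone convergence should recover the bound.

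The difficulty is that this indicator is not $\Xb_{-i}$-measurable, so multiplying by it destroys the conditional-mean-zero structure of the $i$th term. If that fails, the fallback is to pass to conditional expectations one coordinate at a time: exploit $Y_i\le\alpha-\sum_{j\ne i}Y_j$, take $\Eb[\cdot\mid\Xb_{-i}]$, and use that each $\Eb[Y_j\mid\Xb_{-j}]\ge0$. This would bound $\Eb[Y_i^+]$ by $\alpha$ plus controlled negative parts.

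Once $\Eb_P[\lambda^{(i)}_\alpha]<\infty$ is secured, the refined bound $\Eb_P[\lambda^{(i)}_\alpha(\Xb_{-i})]\le\alpha$ should come from the same decomposition. Take $\Eb[\dual^\lambda_\alpha]\le\alpha$ together with the nonnegativity of $\dual^\lambda_\alpha$, specialised along the coordinate $i$ by comparing with the value at $x_i=0$.
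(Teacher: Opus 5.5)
There is a genuine gap. The only nontrivial point of the lemma is finiteness of $\Eb_P[\lambda^{(i)}_\alpha(\bm{X}_{-i})]$, and your proposal never establishes it. Your reduction is correct and matches the paper's final step: you write $\dual^\lambda_\alpha=A-B$, use the tower property in $[0,\infty]$, and conclude that $\Eb_P B<\infty$ implies everything.

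The two routes you offer for finiteness do not close it.
\begin{itemize}
\item The truncation fails, as you note yourself.
\item The fallback also fails. Conditioning on $\bm{X}_{-i}$ tells you nothing about the terms $j\neq i$, because $\sigma(\bm{X}_{-i})$ and $\sigma(\bm{X}_{-j})$ are not nested. So $\Eb[Y_j\mid\bm{X}_{-j}]\ge0$ says nothing about $\Eb[Y_j\mid\bm{X}_{-i}]$. Moreover, the $Y_j$ are not known to be integrable, so those conditional expectations and the ``controlled negative parts'' are bounded by nothing.
\end{itemize}

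Your last paragraph's comparison at $x_i=0$ is the right ingredient, but as you use it, it is circular. The bound $\Eb_P[\dual^\lambda_\alpha(\bm{X})]\le\alpha$ concerns $\bm{X}$ itself. Setting $x_i=0$ instead gives
\[
\lambda^{(i)}_\alpha(\bm{y})\le D_i(\bm{y}):=\lambda^{(i)}_\alpha(\bm{y})+\dual^\lambda_\alpha(\bm{y}^{(\oplus_i0)}),
\]
and what you actually need is a bound on $\Eb[D_i(\bm{X}_{-i})]$. That is a statement about a different function and a different law.

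The missing idea is induction on $n$. Write out
\[
D_i(\bm{y})=\alpha+\sum_{j\neq i}\lambda^{(j)}_\alpha\bigl((\bm{y}^{(\oplus_i 0)})_{-j}\bigr)\bigl((\bm{y}^{(\oplus_i 0)})_j-1\bigr).
\]
Three facts make the induction hypothesis applicable:
\begin{itemize}
\item $D_i$ is itself a leave-one-out dual expression in $n-1$ variables, since the $j$th multiplier does not depend on the $j$th remaining coordinate.
\item $D_i$ is nonnegative, because $D_i\ge\lambda^{(i)}_\alpha\ge0$. Hence it certifies the trivial statistic $T\equiv1$.
\item The law of $\bm{X}_{-i}$ lies in $\nullCond^{n-1}$, since $\Eb[X_j\mid\bm{X}_{-\{i,j\}}]=\Eb\bigl[\Eb[X_j\mid\bm{X}_{-j}]\mid\bm{X}_{-\{i,j\}}\bigr]\le1$.
\end{itemize}
The base case $n=1$ is immediate: $\lambda^{(1)}_\alpha$ is a constant, and $\dual^\lambda_\alpha(0)=\alpha-\lambda^{(1)}_\alpha\ge0$.

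The induction hypothesis then gives $\Eb_P[D_i(\bm{X}_{-i})]\le\alpha$, hence $\Eb_P[\lambda^{(i)}_\alpha(\bm{X}_{-i})]\le\alpha$. This yields finiteness and the refined bound at once, and your $A-B$ argument then finishes the proof exactly as in the paper.
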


If such a certificate exists for every \(\alpha\in(0,1)\), then \(T\) is
evidently a valid \(p\)-value for \(\nullCond\).
We call \(\{\lambda^{(i)}_\alpha\}_{i \in [n]}\) a leave-one-out dual
certificate for \(T\) at level \(\alpha\).
When \(T\) is symmetric, it suffices to find a single function
\(\lambda_\alpha\) certifying~\eqref{eq:intro-dual-certificate} holds,
taking \(\lambda_\alpha^{(i)}=\lambda_\alpha\) for every \(i\).
The only subtlety in the result is that, since we impose no integrability
assumption on the functions \(\lambda_\alpha^{(i)}\), we cannot immediately
take expectations in inequality \eqref{eq:dual-certificate}.

\begin{proof}
  We prove the statement by induction on \(n\).
  For \(n=1\), \(\lambda^{(1)}\) is a constant.
  Since \(\dual^{\lambda}_{\alpha}(0)=\alpha-\lambda^{(1)}\ge0\), we have
  \( \Eb_P[\lambda^{(1)}] \le \alpha\).
  Using \(\Eb_P[X_1]\le1\),
  \[
    \Eb_P[\dual^{\lambda}_{\alpha}(X_1)]
    =
    \alpha + \lambda^{(1)} \cdot (\Eb_P[X_1]-1)
    \le \alpha.
  \]

  Now assume the result holds in dimension \(n-1\). Fix \(i\in[n]\).
  For \(\bm{z}=(z_1,\ldots,z_{n-1})\in\Rb_+^{n-1}\), let
  \(\bm{z}^{(\oplus_i a)}\) denote the vector obtained by inserting \(a\) in
  position \(i\):
  \[
  \bm{z}^{(\oplus_i a)}
  :=
  (z_1,\ldots,z_{i-1},a,z_i,\ldots,z_{n-1}).
\]
  For \(\bm{y}=\bm{x}_{-i} \in \Rb^{n-1}\), define
  \[
    D_i(\bm{y})
    :=
    \alpha+\sum_{j\ne i}
    \lambda^{(j)}_\alpha\left((\bm{y}^{(\oplus_i0)})_{-j}\right)\left((\bm{y}^{(\oplus_i0)})_j-1\right).
  \]
  We claim that the induction hypothesis applies to \(D_i\).
  
  Since \(\dual^{\lambda}_{\alpha}(\bm{y}^{(\oplus_i0)})\ge0\), we have
  \[
    0 \le \lambda^{(i)}_{\alpha}(\bm{y}) \le \lambda^{(i)}_{\alpha}(\bm{y}) +
    \dual^{\lambda}_{\alpha}(\bm{y}^{(\oplus_i0)}) = D_i(\bm{y}).
  \]
  The marginal law of \(\bm{X}_{-i}\) belongs to \(\nullCond^{n-1}\) since
  for \(j\ne i\),
  \[
    \Eb[X_j\mid \bm{X}_{-\{i,j\}}]
    =
    \Eb\!\left[\Eb[X_j\mid \bm{X}_{-j}]\mid \bm{X}_{-\{i,j\}}\right]
    \le 1.
  \]
  Hence the induction hypothesis applies to \(D_i\),
  so \(\Eb_P[D_i(\bm{X}_{-i})]\le \alpha \).
  Since \(\lambda^{(i)}_\alpha(\bm{X}_{-i})\le D_i(\bm{X}_{-i})\), it follows that
  \(\Eb_P[\lambda^{(i)}_\alpha(\bm{X}_{-i})]\le \alpha \).
  Thus each \(\lambda^{(i)}_\alpha(\bm{X}_{-i})\) is integrable,
  so
  \[
    \Eb_P[\lambda^{(i)}_\alpha(\bm{X}_{-i}) X_i]
    =
    \Eb_P\!\left[\lambda^{(i)}_\alpha(\bm{X}_{-i})\Eb_P[X_i\mid \bm{X}_{-i}]\right]
    \le
    \Eb_P[\lambda^{(i)}_\alpha(\bm{X}_{-i})]
    <\infty.
  \]
  Hence \(\lambda^{(i)}_\alpha(\bm{X}_{-i})(X_i-1)\) is integrable and
  \[
    \Eb_P[\lambda^{(i)}_\alpha(\bm{X}_{-i})(X_i-1)]
    =
    \Eb_P\!\left[
      \lambda^{(i)}_\alpha(\bm{X}_{-i})\{\Eb_P[X_i\mid \bm{X}_{-i}]-1\}
    \right]
    \le0.
  \]
  Taking expectations for each \(i\) gives
  \[
    \Eb_P[\dual^{\lambda}_{\alpha}(\bm{X})]
    =
    \alpha+\sum_{i=1}^m \Eb_P[\lambda^{(i)}_\alpha(\bm{X}_{-i})(X_i-1)]
    \le \alpha.
  \]
  This proves the induction step and completes the proof.
\end{proof}

\section{The nonparametric likelihood ratio}
\label{sec:nplr}

We use the leave-one-out framework to prove the validity of the
nonparametric likelihood ratio statistic~\eqref{eqn:nplr}.
To develop the idea, recall the product introduced in the
introduction: for \(\bm{z}=(z_1,\ldots,z_m)\in\Rb_+^m\), write
\begin{align}
  \label{eq:def-prodPoly}
  \prodPoly_{\bm{z}}(t)
  :=
  \prod_{j=1}^m\bigl(1+t(z_j-1)\bigr),
\end{align}
with the empty-product convention \(\prodPoly_\varnothing\equiv1\).
Then the representation~\eqref{eqn:nplr} shows
\begin{align}
  \label{eq:def-nplr}
  \nplr(\bm{x})
  =
  \left(\max_{0\le t\le1}\prodPoly_{\bm{x}}(t)\right)^{-1}
  =
  \min_{0\le t\le1}
  \prod_{i=1}^n(1-t+t x_i)^{-1}.
\end{align}

\begin{theorem}[Validity of the nonparametric likelihood ratio]
  \label{thm:nplr-valid}
  The statistic \(\nplr\) is a valid \(p\)-value for \(\nullCond^n\).
\end{theorem}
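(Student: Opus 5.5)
The plan is to apply Lemma~\ref{lem:dual-certificate}. Since $\nplr$ is symmetric, it suffices to construct, for each $\alpha\in(0,1)$, a single nonnegative measurable $\lambda_\alpha:\Rb_+^{n-1}\to\Rb_+$ with
\[
  \alpha+\sum_{i=1}^n\lambda_\alpha(\bm{x}_{-i})(x_i-1)\ \ge\ \one\Bigl\{\max_{t\in[0,1]}\prodPoly_{\bm{x}}(t)\ge 1/\alpha\Bigr\}
  \qquad\text{for all }\bm{x}\in\Rb_+^n .
\]
The construction starts from a leave-one-out representation of the product~\eqref{eq:def-prodPoly}. Differentiating $s\mapsto\prodPoly_{\bm{x}}(s)$ gives $\prodPoly_{\bm{x}}'(s)=\sum_i(x_i-1)\prodPoly_{\bm{x}_{-i}}(s)$. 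Integrating from $0$ to $t$ then yields, for every $t\in[0,1]$,
\[
  \prodPoly_{\bm{x}}(t)=1+\sum_{i=1}^n (x_i-1)\int_0^t\prodPoly_{\bm{x}_{-i}}(s)\,ds .
\]
Consequently, for any fixed $t$, the multiplier $\lambda(\bm{y})=\alpha\int_0^t\prodPoly_{\bm{y}}(s)\,ds\ge0$ gives the exact identity $\dual^\lambda_\alpha(\bm{x})=\alpha\prodPoly_{\bm{x}}(t)\ge0$. This dominates $\one\{\prodPoly_{\bm{x}}(t)\ge1/\alpha\}$, which is the dual-certificate form of Markov's inequality for the fixed-$t$ e-value.

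The difficulty is the data-adaptive maximization over $t$. I would let the upper limit depend on the leave-one-out data and set
\[
  \lambda_\alpha(\bm{y})=\alpha\int_0^{\tau(\bm{y})}\prodPoly_{\bm{y}}(s)\,ds
\]
for a measurable $\tau:\Rb_+^{n-1}\to[0,1]$. Fix $\bm{x}$ and a comparison point $t^\star$, for instance the maximizer of $\prodPoly_{\bm{x}}$ on $[0,1]$. Then
\[
  \dual^\lambda_\alpha(\bm{x})=\alpha\prodPoly_{\bm{x}}(t^\star)+\alpha\sum_i(x_i-1)\int_{t^\star}^{\tau(\bm{x}_{-i})}\prodPoly_{\bm{x}_{-i}}(s)\,ds .
\]
So I need every correction term to be nonnegative, or at least the corrections to sum to something that keeps $\dual^\lambda_\alpha\ge1$ on the rejection region and $\dual^\lambda_\alpha\ge0$ elsewhere. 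Correction $i$ is nonnegative when $\tau(\bm{x}_{-i})\ge t^\star$ for $x_i\ge1$ and $\tau(\bm{x}_{-i})\le t^\star$ for $x_i<1$.

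With $\tau(\bm{y})=\arg\max_{[0,1]}\prodPoly_{\bm{y}}$, the monotonicity goes the wrong way. The derivative of $\log\prodPoly$ is $\sum_j(x_j-1)/(1+t(x_j-1))$, and removing a point with $x_i\ge1$ removes a positive term, so the maximizer decreases. Hence the naive choice fails, and selecting $\tau$ is the main obstacle.

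The first remedy I would try is a threshold-based $\tau$ that looks at the level $1/\alpha$ rather than the argmax. For example, take $\tau(\bm{y})=\sup\{t\le1:\ \alpha\int_0^t\prodPoly_{\bm{y}}\le c(\bm{y})\}$, or truncate $\lambda$ at the value where the certificate would already reach $1$. I would then verify the two required inequalities separately. First, $\dual^\lambda_\alpha\ge1$ whenever $\max_t\prodPoly_{\bm{x}}\ge1/\alpha$: here I would use that $\prodPoly_{\bm{x}}$ is log-concave in $t$, being a product of positive affine functions, so the superlevel set $\{t:\prodPoly_{\bm{x}}(t)\ge1/\alpha\}$ is an interval, and I would compare each $\tau(\bm{x}_{-i})$ to its endpoints. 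Second, $\dual^\lambda_\alpha\ge0$ everywhere, which follows if $\dual^\lambda_\alpha$ can be written as $\alpha\prodPoly_{\bm{x}}(t)$ plus nonnegative terms, or by minimizing the affine-in-$x_i$ expression coordinatewise.

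Because $\dual^\lambda_\alpha$ is affine in each $x_i$ separately, it suffices to check these inequalities on the boundary configurations in each coordinate. That reduction is what I expect to make the casework tractable. Once the certificate is established for every $\alpha$, Lemma~\ref{lem:dual-certificate} gives $\Pb_P\{\nplr(\bm{X})\le\alpha\}\le\alpha$ for all $P\in\nullCond^n$.
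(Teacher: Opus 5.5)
\paragraph{There is a genuine gap.} Your skeleton is the right one: the integral identity, the multiplier family $\alpha\int_0^{\tau(\bm{y})}\prodPoly_{\bm{y}}(s)\,ds$, and the sign conditions on the correction terms are exactly the paper's argument. But you stop where the one real idea is needed. You never fix $\tau$ or the comparison point $t^\star$, and the candidates you suggest do not close the argument.

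\paragraph{The missing choice.} Take $\tau$ to be the first-passage time of the product itself through $1/\alpha$:
\[
\tau_\alpha(\bm{z})=\sup\{t\in[0,1]:\alpha\prodPoly_{\bm{z}}(s)<1 \text{ for all } s\in[0,t]\}.
\]
Take the comparison point to be this same functional evaluated at the full sample, $t^\star=\tau_\alpha(\bm{x})$, not the maximizer. Two facts then finish the proof with no casework.
\begin{itemize}
\item \emph{The indicator is dominated.} Since $\alpha\prodPoly_{\bm{x}}(0)=\alpha<1$, continuity gives $\alpha\prodPoly_{\bm{x}}(\tau_\alpha(\bm{x}))=1$ whenever $\nplr(\bm{x})\le\alpha$. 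Otherwise $\tau_\alpha(\bm{x})=1$ and the value is $\alpha\prod_i x_i\ge 0$. So $\alpha\prodPoly_{\bm{x}}(t^\star)\ge\one\{\nplr(\bm{x})\le\alpha\}$ already.
\item \emph{The sign conditions hold.} From $\prodPoly_{\bm{x}}(t)=(1+t(x_i-1))\prodPoly_{\bm{x}_{-i}}(t)$, the path $\prodPoly_{\bm{x}_{-i}}$ lies below $\prodPoly_{\bm{x}}$ on $[0,1]$ when $x_i\ge1$, and above it when $x_i<1$. A first-passage time is monotone under pointwise ordering of paths. Hence $\tau_\alpha(\bm{x}_{-i})\ge\tau_\alpha(\bm{x})$ in the first case and $\tau_\alpha(\bm{x}_{-i})\le\tau_\alpha(\bm{x})$ in the second, which are exactly your conditions.
\end{itemize}
This is why the first-passage time works where the argmax fails: the argmax is not monotone in the path, while a first passage is. Log-concavity and the superlevel-interval argument are not needed.

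\paragraph{Why your integral threshold falls short.} Thresholding $\alpha\int_0^t\prodPoly_{\bm{y}}$ is also a first-passage functional, so it would give the monotonicity. It fails the first fact, however: at that time the product need not have reached $1/\alpha$. So $\alpha\prodPoly_{\bm{x}}(t^\star)$ need not be $\ge1$ on the rejection region.

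\paragraph{The reduction to boundary configurations is false.} You claim $\dual^\lambda_\alpha$ is affine in each $x_i$ separately, so only boundary configurations need checking. This fails once $\tau$ is data-dependent. The coordinate $x_i$ enters $\lambda_\alpha(\bm{x}_{-j})$ for every $j\ne i$ through $\tau(\bm{x}_{-j})$. That is a nonlinear function of $x_i$, and it can jump when $\max_t\prodPoly$ touches $1/\alpha$ tangentially. Moreover, the right-hand side is an indicator rather than an affine function, so even a multi-affine left-hand side would not reduce to checking boundary points. Drop that step: with the first-passage choice above, the certificate follows directly from the two facts, and Lemma~\ref{lem:dual-certificate} completes the proof.
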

\begin{proof}
  Fix \(\alpha\in(0,1)\).
  To apply \Cref{lem:dual-certificate}, it suffices to construct a
  nonnegative \(\lamNplr_\alpha:\Rb_+^{n-1}\to\Rb_+\) such that, for every
  \(\bm{x}\in\Rb_+^n\),
  \begin{align*}
    \dual^{\lamNplr}_\alpha(\bm{x})
    :=
    \alpha+
    \sum_{i=1}^n \lamNplr_\alpha(\bm{x}_{-i}) (x_i - 1)
    \ge
    \one\{\nplr(\bm{x})\le\alpha\}
    .
  \end{align*}
  We first construct an upper bound for the indicator using
  the representation~\eqref{eq:def-nplr}.

  For the product~\eqref{eq:def-prodPoly},
  define
  \begin{align}\label{eq:def-tau}
    \tau_\alpha(\bm{z}) := \sup
    \{ t \in [0,1] : \alpha \prodPoly_{\bm{z}}(s) < 1 \text{ for all } s \in [0,t] \}  
    .
  \end{align}
  When \(\nplr(\bm{x})\le\alpha\), there exists \(t\in[0,1]\) such
  that \( \alpha \prodPoly_{\bm{x}}(t)\ge1\).
  Continuity of $\prodPoly_{\bm{x}}(\cdot)$ implies that
  $\tau_\alpha(\bm{x})$ is the infimum of such \(t\) and \(\alpha
  \prodPoly_{\bm{x}}(\tau_\alpha(\bm{x}))=1\).
  When \(\nplr(\bm{x})>\alpha\),
  $\prodPoly_{\bx}(t) \alpha < 1$ for
  all $t \in [0, 1]$,
  so \(\tau_\alpha(\bm{x})=1\) while \(\alpha \prodPoly_{\bm{x}}(1) \ge 0\)
  always.
  Thus we obtain the upper bound
  \begin{align*}
    \one\{\nplr(\bm{x})\le\alpha\}
    \le
    \alpha\prodPoly_{\bm{x}}(\tau_\alpha(\bm{x})).
  \end{align*}

  To mimic the dual~\eqref{eq:intro-dual-certificate},
  we express \(\alpha\prodPoly_{\bm{x}}(\tau_\alpha(\bm{x}))\) in the
  form \( \alpha + \sum_{i=1}^n \lamNplr_\alpha(\bm{x}_{-i}) (x_i - 1)\).
  Noting the product form of \(\prodPoly_{\bm{x}}(t)\) in \cref{eq:def-prodPoly},
  a natural idea is to take derivatives with respect to \(t\) via the product
  rule, as
  \begin{align*}
    \frac{\partial}{\partial t}\prodPoly_{\bm{x}}(t)
    &=
    \sum_{i=1}^n (x_i - 1) \prodPoly_{\bm{x}_{-i}}(t).
  \end{align*}
  Integrating both sides from \( 0 \) to \( \tau_\alpha(\bm{x}) \) gives
  \[
  \prodPoly_{\bm{x}}(\tau_\alpha(\bm{x})) - \prodPoly_{\bm{x}}(0)
  = \sum_{i=1}^n (x_i - 1)
  \int_0^{\tau_{\alpha}(\bm{x})} \prodPoly_{\bm{x}_{-i}}(t) \, dt .
  \]
  Since \(\prodPoly_{\bm{x}}(0)=1\), multiplying by \(\alpha\) yields
  \begin{align*}
    \alpha \prodPoly_{\bm{x}}(\tau_\alpha(\bm{x}))
    &= \alpha + \sum_{i=1}^n \alpha (x_i - 1) \int_0^{\tau_\alpha(\bm{x})} \prodPoly_{\bm{x}_{-i}}(t)\,dt.
  \end{align*}
  This is almost in the desired form,
  but \(\alpha \int_0^{\tau_\alpha(\bm{x})} \prodPoly_{\bm{x}_{-i}}(t)\,dt\) depends on
  \(x_i\) through \(\tau_\alpha(\bm{x})\).
  To remove the dependence on $x_i$, for \(\bm{z}\in\Rb_+^{n-1}\) we define
  \begin{align}
    \lamNplr_\alpha(\bm{z})
    :=
    \alpha\int_0^{\tau_\alpha(\bm{z})}\prodPoly_{\bm{z}}(t)\,dt.
    \label{eq:def-lambda-nplr}
  \end{align}
  Every factor \((1-t) + t z_j\) defining
  $\prodPoly_{\bm{z}}(t)$ is nonnegative for $t \in [0, 1]$,
  whence \(\lamNplr_\alpha\ge0\).

  It suffices to show that for all \(i\),
  \begin{align}\label{eq:nplr-inequality}
    \alpha(x_i - 1)\int_0^{\tau_\alpha(\bm{x}_{-i})}\prodPoly_{\bm{x}_{-i}}(t)\,dt
    \ge
    \alpha(x_i - 1)\int_0^{\tau_\alpha(\bm{x})}\prodPoly_{\bm{x}_{-i}}(t)\,dt.
  \end{align}
  To see the inequality, note
  that
  \(\prodPoly_{\bm{x}}(t) = \bigl(1+t(x_i-1)\bigr)\prodPoly_{\bm{x}_{-i}}(t).\)
  If \(x_i\ge1\), then \(\prodPoly_{\bm{x}}(t)\ge\prodPoly_{\bm{x}_{-i}}(t)\) for
  all \(t\in[0,1]\), and therefore by definition~\eqref{eq:def-tau}, we have
  \[
  \tau_\alpha(\bm{x}_{-i}) \ge \tau_\alpha(\bm{x}),
  \]
  so inequality~\eqref{eq:nplr-inequality} follows.
  If \(x_i<1\), the inequalities reverse:
  $\prodPoly_{\bm{x}}(t) \le \prodPoly_{\bm{x}_{-i}}(t)$ and
  $\tau_\alpha(\bm{x}) \ge \tau_\alpha(\bm{x}_{-i})$.
  The rightmost integral in inequality~\eqref{eq:nplr-inequality} in this case
  is over a larger region, so multiplying by $(x_i - 1) < 0$
  gives the result.
  This finalizes the demonstration that \(\lamNplr_{\alpha}\) is a dual
  certificate~\eqref{eq:intro-dual-certificate} for \(\nplr\),
  and applying \Cref{lem:dual-certificate} completes the proof.
\end{proof}

\section{The generalized binomial p-value}
\label{sec:binplus}

Consider a thought experiment where $X_i \in \{0, b\}$ for some $b>1$.
In this case, the exact one-sided binomial $p$-value gives a \(p\)-value for the
null hypothesis that \(\Eb X_i\le 1\),
which \citet{ClopperPe34} implicitly develop via
one-sided confidence sets for the binomial.
To explore this, let $k$ be the number of observations in a
sample $\Xb$ equal to $b$.
For $q = \Pb(X_i = b)$, the null that $\Eb X = q b \le 1$ has
exact $p$-value \(\Pb\{\operatorname{Bin}(n, q)\ge k\}.\)
By the standard relationship between binomial tails and the incomplete beta
function \citep[Eq.~6.6.4]{AbramowitzSt65}, this tail probability is
\begin{align*}
  \Pb \{\operatorname{Bin}(n, q)\ge k\}
  =
  \Pb \{ \operatorname{Beta}(k, n-k+1) \le q \}
  = 
  \frac{\int_0^q t^{k-1}(1-t)^{n-k}\,dt}{\int_0^1 t^{k-1}(1-t)^{n-k}\,dt}.
\end{align*}
For the ``worst'' null, let $q = 1/b$ 
and consider the change of variables \( t = q + (1 - q) s = 1/b + (1 - 1/b) s\).
Then the integral ratio becomes
\begin{align*}
  \Pb \{\operatorname{Bin}(n,1/b)\ge k\}
  =
  \frac{\int_{-1/(b-1)}^0 (1 + s(b-1))^{k-1} (1 - s)^{n-k}\,ds}{\int_{-1/(b-1)}^1 (1 + s(b-1))^{k-1} (1 - s)^{n-k}\,ds}
  .
\end{align*}

We recognize that for the realization $\bx = (0^{n-k}, b^k)$ and any index
$j$ with $x_j = b$, then $\prodPoly_{\bx_{-j}}(s) = (1 + s(b - 1))^{k-1}(1 -
s)^{n-k}$.
For this $\bx$, we can express the exact \(p\)-value as
\[
  \Pb \{\operatorname{Bin}(n,1/b)\ge k\} = \frac{\int_{-1/(b-1)}^0
    \prodPoly_{\bm{x}_{-j}}(s)\,ds}{\int_{-1/(b-1)}^1
    \prodPoly_{\bm{x}_{-j}}(s)\,ds}.
\]
Proceeding heuristically, if we replace \( \bm{x} \) with an arbitrary
vector in \(\Rb_+^n\), set $j$ to be any index with \(x_j = x_{\max}
:= \max_{i \le n} x_i\), and
replace \(b\) in the integration endpoints with \(x_{\max}\), we obtain a
natural generalization of the binomial \(p\)-value: when \(x_{\max}>1\),
\begin{align}
  \label{eq:def-Pvee}
  \pBinPlus(\bm{x})
  :=
  \frac{\int_{-1/(x_{j}-1)}^0 \prod_{i \ne j} (1 + s(x_i - 1))\,ds}{\int_{-1/(x_j-1)}^1 \prod_{i \ne j} (1 + s(x_i - 1))\,ds}
  = \frac{\int_{-1/(x_{j}-1)}^0 \prodPoly_{\bm{x}_{-j}}(s)\,ds}{\int_{-1/(x_j-1)}^1 \prodPoly_{\bm{x}_{-j}}(s)\,ds}.
\end{align}
and when \(x_{\max}\le 1\), we set \(\pBinPlus(\bm{x})=1\).

The asymmetry of the form~\eqref{eq:def-Pvee} is artificial,
so we rewrite \(\pBinPlus\), which will also contribute to our analysis.
For \(\bm{x}\in\Rb_+^n\) and \(i\in[n]\), define
\begin{align*}
  \polyRightInt_i(\bm{x})
  &:=
  \int_0^1 \prodPoly_{\bm{x}_{-i}}(t) dt
  ~~ \mbox{and} ~~
  \polyLeftInt_i(\bm{x})
  :=
  \begin{cases}
    \displaystyle
    \int_{-(x_{\max}-1)^{-1}}^0
    \prodPoly_{\bm{x}_{-i}}(t) dt
    & \mbox{if}~ x_{\max}>1 \\
    +\infty,
    & \mbox{if}~ x_{\max}\le1,
  \end{cases}
\end{align*}
and let
\begin{align}
  \restatableEqDef{rhoDef}{eq:def-rho-i}{%
    \rho_i(\bm{x})
    :=
    \frac{\polyLeftInt_i(\bm{x})}
    {\polyLeftInt_i(\bm{x})+\polyRightInt_i(\bm{x})}
  }%
  .
\end{align}
The denominator in \(\rho_i(\bm{x})\) is positive, since all factors in the
integrals defining \(\polyLeftInt_i(\bm{x})\) and
\(\polyRightInt_i(\bm{x})\) are nonnegative
and \(\prodPoly_{\bm{x}_{-i}}(0)=1\).
We interpret $+\infty / +\infty$ as $1$, so $\rho_i(\bx) = 1$ if
$x_{\max} \le 1$.

\begin{proposition}[Representation and monotonicity of \(\pBinPlus\)]
  \label{prop:binplus-ratio-monotonicity}
  For \(\bm{x}\in\Rb_+^n\),
  \begin{align}
    \pBinPlus(\bm{x})=\max_i\rho_i(\bm{x}),
    \label{eq:binplus-max-ratio}
  \end{align}
  and \(\pBinPlus\) is elementwise nonincreasing: if \(\bm{x}\le\bm{y}\)
  elementwise, then \(\pBinPlus(\bm{x})\ge\pBinPlus(\bm{y})\).
\end{proposition}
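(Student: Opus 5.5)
The plan is to prove both claims by pointwise comparison of the integrands $\prodPoly_{\bm{x}_{-i}}$. Two integrals over the same interval compare in the expected direction whenever their integrands differ by a single linear factor $1+t(a-1)$ versus $1+t(b-1)$. Indeed,
\[
  \bigl(1+t(a-1)\bigr)-\bigl(1+t(b-1)\bigr)=t(a-b),
\]
which has one sign for $t\le 0$ and the opposite sign for $t\ge 0$. Throughout, $\rho_i=\polyLeftInt_i/(\polyLeftInt_i+\polyRightInt_i)$ is increasing in $\polyLeftInt_i$ and decreasing in $\polyRightInt_i$. So it suffices to track how the left integral (over $[-(x_{\max}-1)^{-1},0]$) and the right integral (over $[0,1]$) move.

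\emph{Representation.} If $x_{\max}\le 1$, every $\rho_i=1=\pBinPlus$ by convention, so assume $x_{\max}>1$. Let $j$ satisfy $x_j=x_{\max}$; by definition~\eqref{eq:def-Pvee}, $\pBinPlus(\bm{x})=\rho_j(\bm{x})$, so we need $\rho_i\le\rho_j$ for all $i$. Fix $i\neq j$ and let $C(t)=\prod_{k\ne i,j}(1+t(x_k-1))$. Then
\[
  \prodPoly_{\bm{x}_{-j}}(t)=C(t)\bigl(1+t(x_i-1)\bigr)
  \quad\text{and}\quad
  \prodPoly_{\bm{x}_{-i}}(t)=C(t)\bigl(1+t(x_j-1)\bigr).
\]
On the integration range $t\ge -(x_{\max}-1)^{-1}$, every factor is nonnegative because $x_k\le x_{\max}$, so $C\ge0$. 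The difference of the two integrands is $C(t)\,t\,(x_i-x_j)$. Since $x_i\le x_j$, this is $\ge0$ for $t\le0$ and $\le0$ for $t\ge0$. Hence $\polyLeftInt_j\ge\polyLeftInt_i$ and $\polyRightInt_j\le\polyRightInt_i$, which gives $\rho_j\ge\rho_i$. This also shows the value in~\eqref{eq:def-Pvee} does not depend on which maximizing index $j$ is chosen.

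\emph{Monotonicity.} I will show that each $\rho_i$ is elementwise nonincreasing; then $\max_i\rho_i$ is too, by~\eqref{eq:binplus-max-ratio}. It suffices to increase a single coordinate $x_k$ to $y_k\ge x_k$. If $k\ne i$, then $\prodPoly_{\bm{x}_{-i}}$ changes only through the factor $1+t(x_k-1)$, which becomes $1+t(y_k-1)$. The difference $t(y_k-x_k)$ times a nonnegative common factor shows that the integrand weakly decreases on $t\le0$ and weakly increases on $t\ge0$. So, with the left endpoint held fixed, $\polyLeftInt_i$ does not increase and $\polyRightInt_i$ does not decrease.

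The remaining effect, which also covers $k=i$, where $\prodPoly_{\bm{x}_{-i}}$ is unchanged, is that $x_{\max}$ may increase. This moves the left endpoint $-(x_{\max}-1)^{-1}$ toward $0$, and since the integrand is nonnegative, $\polyLeftInt_i$ can only shrink. If $x_{\max}$ crosses from $\le1$ to $>1$, then $\rho_i$ moves from $1$ to a value $\le1$. In all cases $\rho_i$ weakly decreases.

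The only delicate point I anticipate is nonnegativity of the common factor on the whole left interval, including factors that vanish at the endpoint. This holds because the interval is cut at $-(x_{\max}-1)^{-1}$, where all factors $1+t(x_k-1)$ with $x_k\le x_{\max}$ are still $\ge0$. When $x_{\max}$ increases, the new interval is contained in the old one, so the same holds there. The conventions $+\infty/+\infty=1$ and positivity of the denominator, noted before the proposition, handle the remaining boundary cases.
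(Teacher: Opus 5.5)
Your proposal is correct. The representation half is essentially the paper's argument: you compare $\prodPoly_{\bm{x}_{-i}}$ and $\prodPoly_{\bm{x}_{-j}}$ pointwise through the single differing factor. The paper does this for every pair with $x_i\le x_j$ and later reuses the resulting ordering $\rho_1\le\cdots\le\rho_n$ in the validity proof; you only need the maximizing index, and your remark that the choice of maximizer is immaterial is a small bonus.

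For monotonicity you take a genuinely different route. The paper does not show that each $\rho_i$ is monotone; it tracks only a maximizing index and splits into two cases.
\begin{itemize}
\item If $y_k\le x_{\max}$, some $j\neq k$ keeps $y_j=y_{\max}$. The endpoints of $\rho_j$ are unchanged and only its integrand moves.
\item If $y_k>x_{\max}$, index $k$ becomes the maximizer and $\prodPoly_{\bm{y}_{-k}}=\prodPoly_{\bm{x}_{-k}}$. Only the left endpoint of $\rho_k$ moves, and the argument closes with $\rho_k(\bm{x})\le\pBinPlus(\bm{x})$.
\end{itemize}
Each case thus involves one effect at a time. You prove the stronger statement that every $\rho_i$ is coordinatewise nonincreasing. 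That makes the passage to $\max_i\rho_i$ immediate and removes the case split. The price is the subcase $k\neq i$, $y_k>x_{\max}$, where the integrand and the endpoint change simultaneously.

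In that subcase the order of your two comparisons matters, and as written it reads backwards. Suppose you first compare integrands on the old interval $[-(x_{\max}-1)^{-1},0]$ and then shrink it. The shrinking step fails, because $\prodPoly_{\bm{y}_{-i}}$ contains the factor $1+t(y_k-1)$. That factor is negative on $[-(x_{\max}-1)^{-1},-(y_k-1)^{-1})$, so discarding this piece can increase the integral. The valid chain shrinks first with the old integrand, then compares integrands only on the new interval:
\begin{align*}
  \polyLeftInt_i(\bm{y})
  &=\int_{-(y_{\max}-1)^{-1}}^0\prodPoly_{\bm{y}_{-i}}(t)\,dt
  \le\int_{-(y_{\max}-1)^{-1}}^0\prodPoly_{\bm{x}_{-i}}(t)\,dt\\
  &\le\int_{-(x_{\max}-1)^{-1}}^0\prodPoly_{\bm{x}_{-i}}(t)\,dt
  =\polyLeftInt_i(\bm{x}).
\end{align*}
The first inequality uses nonnegativity of $\prod_{l\neq i,k}(1+t(x_l-1))$ on the smaller new interval. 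The second uses nonnegativity of $\prodPoly_{\bm{x}_{-i}}$ on the old interval. Your closing paragraph already contains both facts, so you only need to state the order explicitly.
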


\begin{proof}
  Throughout the proof,
  when \(\bm{x}\) is fixed, we omit the argument $\bx$, writing
  \(\polyLeftInt_i\), \(\polyRightInt_i\), and \(\rho_i\).
  We first prove that the \(\rho_i\) have the same ordering as the \(x_i\).
  This is immediate when \(x_{\max}\le1\), because then \(\rho_i=1\) for every
  \(i\).
  Now suppose that \(x_{\max}>1\).
  When $-\frac{1}{x_{\max} - 1} \le t \le 0$,
  we have $1 + t (x_j - 1) \ge 1 + t(x_{\max} - 1) \ge 0$
  for all $j$,
  so each term defining $\prodPoly_{\bm{x}_{-j}}(t)$ is nonnegative
  for $-\frac{1}{x_{\max} - 1} \le t$.
  Let $x_i\le x_j$.
  When \(t\le0\),
  \[
  1+t(x_j-1)
  \le
  1+t(x_i-1).
  \]
  The remaining (nonnegative)
  factors defining \(\prodPoly_{\bm{x}_{-i}}\) and
  \(\prodPoly_{\bm{x}_{-j}}\) agree, so
  \(\prodPoly_{\bm{x}_{-i}}(t)\le\prodPoly_{\bm{x}_{-j}}(t)\) when
  \(-\frac{1}{x_{\max} - 1} \le t \le 0\).
  Conversely, $\prodPoly_{\bx_{-i}}(t) \ge \prodPoly_{\bx{-j}}(t)$
  on \(0\le t\le1\),
  so
  \(0\le\polyLeftInt_i\le\polyLeftInt_j\) while
  \(0 \le \polyRightInt_j \le \polyRightInt_i\),
  and
  \begin{align}
    \rho_i
    =
    \frac{\polyLeftInt_i}{\polyLeftInt_i+\polyRightInt_i}
    \le
    \frac{\polyLeftInt_j}{\polyLeftInt_j+\polyRightInt_i}
    \le
    \frac{\polyLeftInt_j}{\polyLeftInt_j+\polyRightInt_j}
    =
    \rho_j.
    \label{eq:binplus-ratio-ordering}
  \end{align}
  Choosing \(j\) with \(x_j=x_{\max}\), the definition~\eqref{eq:def-Pvee}
  of $\pBinPlus$ 
  coincides with~\eqref{eq:binplus-max-ratio}.

We next prove coordinatewise monotonicity.
By increasing the coordinates one at a time, it suffices to consider
\(\bm{x}\) and \(\bm{y}\) that differ only in coordinate \(k\), with
\(x_k < y_k\).
If \(x_{\max}\le1\), the conclusion is immediate.
Hence we may assume that \(x_{\max}>1\).

Suppose first that \(y_k\le x_{\max}\).
Then there is an index \(j\ne k\) such that \(x_j=x_{\max}=y_{\max}\).
The integration endpoints defining \(\polyBothInt_j(\bx)\) and
$\polyBothInt_j(\by)$ are therefore identical.
Increasing \(x_k\) to \(y_k\) decreases the integrand for $t \le 0$ and
increases it for $t \ge 0$,
so
\(0\le\polyLeftInt_j(\bm y)\le\polyLeftInt_j(\bm x)\) and
\(\polyRightInt_j(\bm y)\ge\polyRightInt_j(\bm x)\), and
\[
  \pBinPlus(\bm{y}) = \rho_j(\bm{y})
  = \frac{\polyLeftInt_j(\bm{y})}
  {\polyLeftInt_j(\bm{y})+\polyRightInt_j(\bm{y})}
  \le
  \frac{\polyLeftInt_j(\bm{x})}
  {\polyLeftInt_j(\bm{x})+\polyRightInt_j(\bm{x})}
  = \rho_j(\bm{x}) = \pBinPlus(\bm{x}).
\]
It remains to consider the case that \(y_k>x_{\max}\), in which case \(k\)
is the new maximizing index.
Then \(\prodPoly_{\bm{x}_{-k}}=\prodPoly_{\bm{y}_{-k}}\),
so
\[
  \polyRightInt_k(\bm{y})
  = \int_0^1 \prodPoly_{\bm{y}_{-k}}(t)\,dt
  = \int_0^1 \prodPoly_{\bm{x}_{-k}}(t)\,dt
  = \polyRightInt_k(\bm{x})
  .
\]
Since \(y_{\max} = y_k > x_{\max}\), we
have \( - (y_{\max}-1)^{-1} > -(x_{\max}-1)^{-1}\), and
\[
  \polyLeftInt_k(\bm{y})
  = \int_{-(y_{\max}-1)^{-1}}^0 \prodPoly_{\bm{y}_{-k}}(t)\,dt
  \le
  \int_{-(x_{\max}-1)^{-1}}^0 \prodPoly_{\bm{x}_{-k}}(t)\,dt
  = \polyLeftInt_k(\bm{x}) .
\]
Consequently,
\[
  \pBinPlus(\bm{y})
  = \rho_k(\bm{y})
  = \frac{\polyLeftInt_k(\bm{y})}
  {\polyLeftInt_k(\bm{y})+\polyRightInt_k(\bm{y})}
  \le
  \frac{\polyLeftInt_k(\bm{x})}
  {\polyLeftInt_k(\bm{x})+\polyRightInt_k(\bm{x})}
  = \rho_k(\bm{x}) \le
  \pBinPlus(\bm{x}).
  \qedhere
  \]
\end{proof}

The leave-one-out duality framework of the introduction and
Section~\ref{sec:dual-certificates} applies with
the quantities $\polyBothInt_i$ and $\rho_i$, which
we can therefore use to establish the validity of the
generalized binomial statistic $\pBinPlus$ as a $p$-value for
the hypothesis test~\eqref{eq:intro-hypothesis}.

\begin{theorem}
  \label{thm:binplus-valid}
  The generalized binomial statistic \(\pBinPlus\) is a valid \(p\)-value
  for \(\nullCond^n\).
\end{theorem}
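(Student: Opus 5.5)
The plan is to apply \Cref{lem:dual-certificate} with an explicit leave-one-out certificate, copying the proof of \Cref{thm:nplr-valid}. There, the product rule gave the telescoping identity $\prodPoly_{\bm{x}}(b)-\prodPoly_{\bm{x}}(a)=\sum_i (x_i-1)\int_a^b \prodPoly_{\bm{x}_{-i}}(t)\,dt$, and the $x_i$-dependence of the integration limits was removed by a monotonicity comparison. The new ingredient for $\pBinPlus$ is the left endpoint $a=-(x_{\max}-1)^{-1}$. At this point the factor for the maximal coordinate vanishes, so $\prodPoly_{\bm{x}}(a)=0$. Taking $b=0$ then gives the exact identity
\begin{align*}
  1=\sum_{i=1}^n (x_i-1)\,\polyLeftInt_i(\bm{x}) \qquad (x_{\max}>1).
\end{align*}
This is the generalized-binomial analogue of $\alpha\prodPoly_{\bm{x}}(\tau_\alpha)$ bounding the indicator. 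Because the $p$-value is a ratio $\polyLeftInt/(\polyLeftInt+\polyRightInt)$, I would also use the analogous identity on $[0,b]$ with $b\in[0,1]$, namely $\prodPoly_{\bm{x}}(b)-1=\sum_i(x_i-1)\int_0^b\prodPoly_{\bm{x}_{-i}}$.

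\emph{Step 1 (candidate multipliers).} Fix $\alpha$. For $\bm z\in\Rb_+^{n-1}$ with $z_{\max}>1$, let $a(\bm z)=-(z_{\max}-1)^{-1}$. Choose $b_\alpha(\bm z)\in[0,1]$, analogous to $\tau_\alpha$, as the smallest $b$ at which the ``left-mass fraction'' $\int_{a(\bm z)}^0\prodPoly_{\bm z}\,/\int_{a(\bm z)}^{b}\prodPoly_{\bm z}$ drops to $\alpha$; set $b_\alpha(\bm z)=1$ if it never does. Then define
\begin{align*}
  \lambda_\alpha(\bm z):=\alpha\,\frac{\int_{a(\bm z)}^{b_\alpha(\bm z)}\prodPoly_{\bm z}(t)\,dt}{\int_{a(\bm z)}^{0}\prodPoly_{\bm z}(t)\,dt}\cdot\kappa,
\end{align*}
with a normalization $\kappa$ fixed so that on the rejection region $\{\max_i\rho_i\le\alpha\}$ the telescoped sum is at least $1-\alpha$. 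The natural first guess is $\kappa=1$. The case $z_{\max}\le1$ (so $a=-\infty$) is treated as a degenerate case where $\lambda_\alpha$ is chosen large, e.g.\ $\lambda_\alpha(\bm z)=\alpha/(1-z_{\max})$-type bounds. Nonnegativity of $\lambda_\alpha$ is immediate because all factors are nonnegative on $[a,1]$, as shown in the proof of \Cref{prop:binplus-ratio-monotonicity}. Since $\pBinPlus$ is symmetric, the single function $\lambda_\alpha$ serves as every $\lambda^{(i)}_\alpha$.

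\emph{Step 2 (pointwise inequality).} I would check $\alpha+\sum_i\lambda_\alpha(\bm{x}_{-i})(x_i-1)\ge\one\{\pBinPlus(\bm x)\le\alpha\}$ by the same sign argument as \eqref{eq:nplr-inequality}. Replace each $\lambda_\alpha(\bm x_{-i})$ by the version whose limits $a,b$ are computed from the full $\bm x$, then sum using the two telescoping identities. The rejection condition $\rho_j\le\alpha$ for the maximizing index $j$ (recall $\pBinPlus=\max_i\rho_i$ and the ordering \eqref{eq:binplus-ratio-ordering}) is what makes the telescoped quantity at least $1$. Off the rejection region, nonnegativity of the left side suffices.

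\emph{Step 3 (main obstacle).} The comparison that removes $x_i$ from the limits is the crux. For $i$ not a maximizer, $a(\bm x_{-i})=a(\bm x)$, and only $b_\alpha$ moves. Here I expect, as in the NPLR proof, that $x_i\ge1$ pushes $b_\alpha$ one way and $x_i<1$ the other, consistent with the sign of $(x_i-1)$. This should follow from the monotonicity argument in \Cref{prop:binplus-ratio-monotonicity}, where increasing a coordinate shrinks the left integral and enlarges the right one. For the maximizing index $j$, removing $x_j$ also shifts the left endpoint $a$ to the left. This is exactly the second case of the monotonicity proof, and I would handle it by that argument. I expect getting the normalization $\kappa$ and the definition of $b_\alpha$ to make both comparisons go the right way simultaneously to be the hard part. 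If the ratio-normalized form fails, I would first try the unnormalized choice $\lambda_\alpha(\bm z)=c_\alpha\int_{a(\bm z)}^{b_\alpha(\bm z)}\prodPoly_{\bm z}$, with $c_\alpha$ tuned from the binomial special case $\bm x=(0^{n-k},b^k)$, where everything reduces to Clopper--Pearson.
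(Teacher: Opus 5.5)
\textbf{There is a genuine gap: the multiplier you propose cannot be a certificate.} Your architecture is the same as the paper's: a full-sample majorant plus a leave-one-out comparison, driven by the identity $\sum_i(x_i-1)\polyLeftInt_i(\bm x)=1$. The failure is in the choice of multiplier. In addition, the inequality off the rejection region is exactly where the work lies, and ``nonnegativity of the left side suffices'' does not argue it.

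By the definition of $b_\alpha$, whenever the left-mass fraction reaches $\alpha$ it equals $\alpha$ at $b_\alpha$. So for $z_{\max}>1$ your formula collapses to $\lambda_\alpha(\bm z)=\kappa\min\{1,\alpha/\rho_1(0,\bm z)\}$, which has no data-dependent scale. Already for $n=3$ no constant $\kappa$ works:
\begin{itemize}
\item The certificate at $(0,0,0)$ forces $\lambda_\alpha(0,0)\le\alpha/3$.
\item At $(0,0,B)$ with $(1-1/B)^3=1-\alpha$, we have $\pBinPlus=\alpha$, a rejection. Together with the previous bound, the certificate forces $2\lambda_\alpha(0,B)\le\alpha-1+(B-1)\alpha/3\le 1/B\le\alpha$. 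Hence $\lambda_\alpha(0,B)\le\alpha/2$.
\item At $(C,C,C)$ with $C^3=1/\alpha$, again $\pBinPlus=\alpha$. The certificate forces $\lambda_\alpha(C,C)\ge(1-\alpha)/(3(C-1))>\alpha/2$.
\end{itemize}
Your formula assigns the same value $\kappa$ to both $(0,B)$ and $(C,C)$, which is a contradiction.

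The unnormalized fallback $c_\alpha\int_a^{b_\alpha}\prodPoly_{\bm z}$ fails too, for every $\alpha<1/2$. At $(0,2,0)$, where $\pBinPlus=7/8$, the certificate reads $\alpha-\tfrac83 c_\alpha+\lambda_\alpha(0,0)\ge 0$, which forces $c_\alpha\le\alpha/2$. Meanwhile $(C,C,C)$ forces $c_\alpha\ge\alpha(1-\alpha)$.

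Your degenerate case is also backwards. At $\bm x=(0,\bm z)$ with $z_{\max}\le 1$ we have $\pBinPlus(\bm x)=1$ and every other term is nonpositive. So necessarily $\lambda_\alpha(\bm z)\le\alpha$, not ``large.''

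\textbf{The missing idea is the paper's multiplier}
$\lambda_\alpha(\bm z)=\min\{(1-\alpha)\polyLeftInt_1(0,\bm z),\,\alpha\polyRightInt_1(0,\bm z)\}$.
It keeps the right endpoint fixed at $1$, with no $\tau_\alpha$-type moving endpoint. It interpolates between two exact telescoping identities: your left one, and $\sum_i(x_i-1)\{\polyLeftInt_i(\bm x)+\polyRightInt_i(\bm x)\}=\prod_i x_i\ge 0$, obtained by integrating over $[-(x_{\max}-1)^{-1},1]$.

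With this choice, the leave-one-out comparison you regard as the crux takes two lines. The quantity $\polyRightInt_i$ does not involve $x_i$. The left endpoint moves only when a coordinate $x_i>1$ is removed, which enlarges the interval and so matches the sign of $x_i-1$. For $z_{\max}\le1$ the multiplier is $\alpha\int_0^1\prodPoly_{\bm z}\le\alpha$.

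The real work is the full-sample bound
$D_{\bm x}(\alpha):=\alpha+\sum_i(x_i-1)\min\{(1-\alpha)\polyLeftInt_i(\bm x),\alpha\polyRightInt_i(\bm x)\}\ge\one\{\pBinPlus(\bm x)\le\alpha\}$.
\begin{itemize}
\item On rejection, every $\rho_i\le\alpha$. So each minimum is the left branch, and $D_{\bm x}(\alpha)=1$.
\item Off rejection, treat $u\mapsto D_{\bm x}(u)$ as piecewise linear on $[0,1]$ with $D_{\bm x}(0)=0$. Its slope is $\sum_{i:\rho_i>u}(x_i-1)\{\polyLeftInt_i+\polyRightInt_i\}$. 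Because the $\rho_i$ are ordered like the $x_i$, the index set is an upper set. So the slope is either a sum of nonnegative terms, or at least the full sum $\prod_i x_i\ge0$. Hence $D_{\bm x}(\alpha)\ge 0$.
\end{itemize}
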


\begin{proof}
  Fix \(\alpha\in(0,1)\).
  By \Cref{lem:dual-certificate} and the dual
  certificate~\eqref{eq:intro-dual-certificate}, it suffices to find a
  nonnegative measurable function \(\lamBinPlus_\alpha:\Rb_+^{n-1}\to\Rb_+\)
  such that, for \(\bm{x}\in\Rb_+^n\),
  \begin{align*}
    \dual^{\lamBinPlus}_\alpha(\bm{x})
    :=
    \alpha+
    \sum_{i=1}^n \lamBinPlus_\alpha(\bm{x}_{-i}) (x_i - 1)
    \ge
    \one\{\pBinPlus(\bm{x})\le\alpha\}.
  \end{align*}
  For $\bm{z} \in \Rb_+^{n-1}$, define
  \begin{equation}
    \label{eq:def-lambda-binplus}
    \lamBinPlus_\alpha(\bm{z})
    :=
    \min\{(1-\alpha)\polyLeftInt_1(0, \bm{z}),
    \alpha\polyRightInt_1(0, \bm{z})\}.
  \end{equation}
  We claim that for all $\bx \in \Rb_+^n$,
  \begin{subequations}
    \begin{equation}
      \label{eq:binplus-common-endpoint-majorant}
      D_{\bm{x}}(\alpha)
      :=
      \alpha +
      \sum_{i=1}^n (x_i - 1)
      \min \{(1-\alpha)\polyLeftInt_i(\bm{x}),
      \alpha\polyRightInt_i(\bm{x})\}
      \ge \one\{\pBinPlus(\bm{x})\le\alpha\},
    \end{equation}
    and for \( i \in [n] \), we have the leave-one-out inequality
    \begin{equation}
      \label{eq:binplus-certificate-2}
      (x_i - 1) \lamBinPlus_\alpha(\bm{x}_{-i})
      \ge
      (x_i - 1)
      \min\{(1-\alpha)\polyLeftInt_i(\bm{x}),
      \alpha\polyRightInt_i(\bm{x})\}.
    \end{equation}
  \end{subequations}
  Combining inequalities
  \eqref{eq:binplus-common-endpoint-majorant}
  and \eqref{eq:binplus-certificate-2},
  $\dual^{\lamBinPlus}_\alpha$ provides
  the dual certification~\eqref{eq:intro-dual-certificate}.
  
  We first show inequality~\eqref{eq:binplus-certificate-2}.
  Note that
  \(\polyRightInt_1(0, \bm{x}_{-i})=\polyRightInt_i(\bm{x})\)
  by definition.
  The case \(x_{\max}\le1\) is trivial, since both
  \(\polyLeftInt_1(0, \bm{x}_{-i})\)
  and \(\polyLeftInt_i(\bm{x})\) are infinite, and
  \cref{eq:binplus-certificate-2} holds
  with equality.
  Hence it suffices to show that when \( x_{\max} > 1 \),
  \[
    (x_i - 1)\polyLeftInt_1(0, \bm{x}_{-i})
    \ge
    (x_i - 1)\polyLeftInt_i(\bm{x}),
  \]
  or equivalently,
  \begin{align*}
    (x_i-1) \int_{- ((\bm{x}_{-i})_{\max}-1)^{-1}}^0 \prodPoly_{\bm{x}_{-i}}(t)\,dt
    \ge
    (x_i-1) \int_{-(x_{\max}-1)^{-1}}^0 \prodPoly_{\bm{x}_{-i}}(t)\,dt.
  \end{align*}
  If \(x_i\le1\), then \( x_{\max} = (\bm{x}_{-i})_{\max} \), so equality holds.
  If \(x_i>1\), then \((\bm{x}_{-i})_{\max}\le x_{\max}\), so the left-hand side
  integrates over a larger interval than the right,
  implying inequality~\eqref{eq:binplus-certificate-2}.
  
  Now we show inequality~\eqref{eq:binplus-common-endpoint-majorant},
  considering the two cases that $\pBinPlus(\bx) \le \alpha$
  and $\pBinPlus(\bx) > \alpha$ separately.
  When \( \pBinPlus (\bm{x}) \le \alpha\), the
  definition~\eqref{eq:binplus-max-ratio} implies that \(\rho_i\le\alpha\)
  for all \(i\).
  Hence
  \((1-\alpha)\polyLeftInt_i(\bm{x})
  \le\alpha\polyRightInt_i(\bm{x})\), and
  \begin{equation*}
    D_{\bm{x}}(\alpha) = 
    \alpha +
    \sum_{i=1}^n
    (x_i - 1) (1 - \alpha) \polyLeftInt_i(\bm{x})
    .
  \end{equation*} 
  The product rule \( \prodPoly_{\bm{x}}'(t) = \sum_{i=1}^n (x_i - 1)
  \prodPoly_{\bm{x}_{-i}}(t) \) gives
  \begin{equation}
    \label{eq:binplus-left-normalization}
    \begin{split}
      \sum_{i=1}^n (x_i - 1) \polyLeftInt_i(\bm{x})
      & =
      \int_{-(x_{\max}-1)^{-1}}^0 \sum_{i=1}^n (x_i - 1) \prodPoly_{\bm{x}_{-i}}(t)\,dt
      \\
      & = \prodPoly_{\bm{x}}(0) -
      \prodPoly_{\bm{x}}\left(-\frac{1}{x_{\max}-1}\right) = 1 - 0 = 1,
    \end{split}
  \end{equation}
  so
  \[
    D_{\bm{x}}(\alpha) = \alpha + (1 - \alpha) \cdot 1 = 1 \ge \one\{\pBinPlus(\bm{x})\le\alpha\}.
  \]

  We consider the complementary case that \(\pBinPlus(\bm{x})>\alpha\).
  When \(x_{\max}\le1\), \(\polyLeftInt_i(\bm{x})=\infty\), so the left-hand side of
  inequality~\eqref{eq:binplus-common-endpoint-majorant} becomes
  \[
    \alpha
    +\alpha\sum_{i=1}^n(x_i-1)\polyRightInt_i(\bm{x})
    = \alpha+\alpha\int_0^1\prodPoly_{\bm{x}}'(t)\,dt =
    \alpha\prod_{i=1}^n x_i \ge0.
  \]
  When \(x_{\max}>1\), we consider the function \( D_{\bm{x}}( \cdot ) \)
  on $[0, 1]$.
  It is continuous, piecewise linear, and satisfies \(D_{\bm{x}}(0)=0\).
  We claim that \(D_{\bm{x}}(\cdot)\) is nondecreasing, which implies
  that \(D_{\bm{x}}(\alpha)\ge0\) and hence proves
  \cref{eq:binplus-common-endpoint-majorant}.
  To show this, we take the derivative, which exists at all but finitely many
  points:
  \begin{align*}
    D_{\bm{x}}'(u)
    &= 1 +
      \sum_{i=1}^n(x_i-1)
      \frac{\partial}{\partial u}
      \min\{(1-u)\polyLeftInt_i,u\polyRightInt_i\}
      .
  \end{align*}
  Letting \(I_u:=\{i \in [n] :\rho_i>u\}\), we have
  \begin{align*}
    D_{\bm{x}}'(u)
    =
      1-\sum_{i\notin I_u}(x_i-1)\polyLeftInt_i
      +\sum_{i\in I_u}(x_i-1)\polyRightInt_i
    =
      \sum_{i\in I_u}(x_i-1)
      (\polyLeftInt_i+\polyRightInt_i),
  \end{align*}
  where the last equality uses \cref{eq:binplus-left-normalization}.
  
  By symmetry, we may without loss of generality assume the ordering
  \(x_1\le\cdots\le x_n\).
  Then the ordering~\eqref{eq:binplus-ratio-ordering}
  implies
  \(\rho_1\le\cdots\le\rho_n\), so \(I_u\) is either empty
  or of the form \(\{k,\ldots,n\}\).
  If \(I_u\) is empty, then \(D_{\bm{x}}'(u)=0\).
  If \(I_u\) is nonempty and \(x_k\ge1\), then every term in the
  sum defining $D_{\bx}'(u)$ is nonnegative, so
  \(D_{\bm{x}}'(u) \ge 0\).
  If \(I_u\) is nonempty and \(x_k<1\), then terms with \(i<k\) are nonpositive,
  and
  \begin{multline*}
    D_{\bm{x}}'(u)
    =
    \sum_{i=1}^n(x_i-1)
    (\polyLeftInt_i+\polyRightInt_i)
    -
    \sum_{i<k}(x_i-1)
    (\polyLeftInt_i+\polyRightInt_i)
    \\
    \ge
    \sum_{i=1}^n(x_i-1)
    (\polyLeftInt_i+\polyRightInt_i)
    = \int_{-(x_{\max}-1)^{-1}}^1\prodPoly_{\bm{x}}'(t)\,dt
    = \prod_{i=1}^n x_i \ge 0.
  \end{multline*}
  Hence \(D_{\bm{x}}'(\cdot)\) is nonnegative, so \(D_{\bm{x}}(\cdot)\) is
  nondecreasing.
  This shows that \(D_{\bm{x}}(\alpha)\ge0\) when
  \(\pBinPlus(\bm{x})>\alpha\), and so
  inequality~\eqref{eq:binplus-common-endpoint-majorant} holds.
\end{proof}

\section{Certifying the minimum of two \(p\)-values via dual switching}
\label{sec:dual-switching}

In general, the pointwise minimum $\min\{S, T\}$ of two \(p\)-values $S$ and $T$
need not be a \(p\)-value.
Nonetheless, the dual formulation allows us to combine certificates for two
statistics into a certificate for their minimum under certain conditions.
We first provide some intuition.
In a dual expression
\[
\dual^{\lambda}_{\alpha}(\bx)
= \alpha + \sum_{i=1}^n (x_i-1)\lambda_{\alpha}^{(i)}(\bm{x}_{-i}),
\]
a larger multiplier $\lambda_\alpha^{(i)}$ makes satisfying
$\dual_{\alpha}^{\lambda} \ge \one\{S(\bm{x}) \le \alpha\}$ easier when
\(x_i>1\), whereas a smaller one is helpful when \(x_i<1\).
As \(\lambda^{(i)}(\bm{x}_{-i})\) cannot depend on \(x_i\),
we instead guess the size of $x_i$ from $\bm{x}_{-i}$ and the statistic $S$.

To develop this idea, for \(\bm{x}\in\Rb_+^n\), let \(\bm{x}^{(i\leftarrow
  a)}\) denote the vector obtained by replacing its \(i\)-th coordinate by
\(a\).
We focus on the boundary \(S(\bm{x})=\alpha\) where decisions are
``hardest.''
If $S$ is nonincreasing in each coordinate, $x_i \ge 1$ implies that
$S(\bm{x}^{(i\leftarrow 1)}) \ge S(\bm{x}) > \alpha$, and $x_{i} \le 1$
implies that $S(\bm{x}^{(i\leftarrow 1)}) \le S(\bm{x}) \le \alpha$.
We therefore expect that increasing the multiplier $\lambda_\alpha^{(i)}$
when $S(\bm{x}^{(i\leftarrow 1)}) > \alpha$ and decreasing it when
$S(\bm{x}^{(i\leftarrow 1)}) \le \alpha$ should
maintain the certificate~\eqref{eq:intro-dual-certificate}.

To be able to combine dual certificates,
we require a somewhat technical definition.
\begin{definition}
  \label{def:combinable}
  Let the statistics $S$ and $T$ have
  candidate dual multipliers
  $\lambda^{S,(i)}_{\alpha}$ and
  $\lambda^{T,(i)}_\alpha$, respectively.
  They \emph{admit combinable dual certificates} if
  \begin{enumerate}[label=(A\arabic*),leftmargin=*]
  \item \label{item:monotone}
    \(S(\bm{x})\) is nonincreasing in each coordinate \(x_i\);
  \item \label{item:comparison}
    \(\lambda^{S,(i)}_{\alpha}(\bm{z})
    \le \lambda^{T,(i)}_{\alpha}(\bm{z})\)
    for \(\bm{z}\in\Rb_+^{n-1}\) and \(i\in[n]\);
  \item \label{item:inner}
    \(\dual^S_\alpha(\bm{x})\ge1\) for \( \bm{x} \) such that $S(\bm{x}) \le \alpha$;
  \item \label{item:outer}
    \(\dual^T_\alpha(\bm{x})\ge\one\{T(\bm{x})\le\alpha\}\) for \(\bm{x}\in\Rb_+^n\).
  \end{enumerate}
\end{definition}

When $p$-values admit combinable duals, their pointwise minimum
remains a valid (and more powerful) $p$-value:
\begin{theorem}
  \label{thm:general-switch}
  Let \(\alpha\in(0,1)\) and
  \(S,T:\Rb_+^n\to[0,1]\)
  admit combinable duals $\lambda_\alpha^{S,(i)}$ and
  $\lambda_\alpha^{T,(i)}$ (Definition~\ref{def:combinable}).
  Then the switched multipliers
  \begin{align*}
    \lambda^{\star,(i)}_\alpha(\bm{z})
    :=
    \begin{cases}
      \lambda^{S,(i)}_\alpha(\bm{z})
        & \text{if } S\left(\bm{z}^{(\oplus_i 1)}\right)\le\alpha,\\
      \lambda^{T,(i)}_\alpha(\bm{z})
        & \text{if } S\left(\bm{z}^{(\oplus_i 1)}\right)>\alpha,
    \end{cases}
  \end{align*}
  provide a dual certificate~\eqref{eq:intro-dual-certificate}:
  for all \(\bm{x}\in\Rb_+^n\),
  \begin{align*}
    \dual^\star_\alpha(\bx) :=
    \alpha+ \sum_{i=1}^n
    (x_i-1)\lambda^{\star,(i)}_\alpha(\bm{x}_{-i}) \ge \one\{\min(S(\bm{x}),T(\bm{x}))\le\alpha\} .
  \end{align*}
\end{theorem}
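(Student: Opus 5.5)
The plan is to compare $\dual^\star_\alpha$ termwise with either $\dual^S_\alpha$ or $\dual^T_\alpha$, depending on whether $S(\bm{x})\le\alpha$. The key fact is that $\dual^\star_\alpha$, $\dual^S_\alpha$ and $\dual^T_\alpha$ all share the constant $\alpha$. Hence it suffices to compare the individual summands $(x_i-1)\lambda^{\star,(i)}_\alpha(\bm{x}_{-i})$ with $(x_i-1)\lambda^{S,(i)}_\alpha(\bm{x}_{-i})$ or with $(x_i-1)\lambda^{T,(i)}_\alpha(\bm{x}_{-i})$. Throughout, write $\bm{z}=\bm{x}_{-i}$, so that $\bm{z}^{(\oplus_i 1)}=\bm{x}^{(i\leftarrow 1)}$.

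\emph{Case 1: $S(\bm{x})\le\alpha$.} The target is to show $\dual^\star_\alpha(\bm{x})\ge\dual^S_\alpha(\bm{x})$; then \ref{item:inner} gives $\dual^\star_\alpha(\bm{x})\ge 1\ge\one\{\min(S,T)(\bm{x})\le\alpha\}$. Fix $i$.
\begin{itemize}
\item If $x_i\le 1$, monotonicity \ref{item:monotone} gives $S(\bm{x}^{(i\leftarrow 1)})\le S(\bm{x})\le\alpha$, so $\lambda^\star=\lambda^S$ and the summands agree.
\item If $x_i>1$, the factor $x_i-1$ is positive, and $\lambda^\star\in\{\lambda^S,\lambda^T\}$ is always at least $\lambda^S$ by \ref{item:comparison}. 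So the $\star$-summand dominates.
\end{itemize}
Summing over $i$ gives the claim.

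\emph{Case 2: $S(\bm{x})>\alpha$.} Now $\one\{\min(S,T)(\bm{x})\le\alpha\}=\one\{T(\bm{x})\le\alpha\}$, so by \ref{item:outer} it suffices to show $\dual^\star_\alpha(\bm{x})\ge\dual^T_\alpha(\bm{x})$. Fix $i$.
\begin{itemize}
\item If $x_i\ge 1$, \ref{item:monotone} gives $S(\bm{x}^{(i\leftarrow 1)})\ge S(\bm{x})>\alpha$, so $\lambda^\star=\lambda^T$ and the summands agree.
\item If $x_i<1$, the factor $x_i-1$ is negative, and $\lambda^\star\le\lambda^T$ by \ref{item:comparison}. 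So again the $\star$-summand dominates.
\end{itemize}

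Before running the cases I would verify two small points. First, $\lambda^\star$ is nonnegative and measurable: it is a selection between two nonnegative measurable functions according to the Borel set $\{\bm{z}:S(\bm{z}^{(\oplus_i 1)})\le\alpha\}$. This matters because \Cref{lem:dual-certificate} can then be applied to conclude validity of $\min\{S,T\}$. Second, the boundary value $x_i=1$ is harmless, since its summand vanishes.

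The one delicate step is the orientation of the monotonicity. The switch is evaluated at $x_i$ replaced by $1$, and in each case one needs exactly the implication that makes the ``wrong'' multiplier appear only in coordinates where the sign of $x_i-1$ renders it harmless. I expect no real obstacle beyond checking these sign conventions carefully.
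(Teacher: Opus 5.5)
Your proposal is correct and follows the paper's proof essentially step for step: the same case split on whether $S(\bm{x})\le\alpha$, the same use of \ref{item:monotone} to identify the active multiplier in the "right-signed" coordinates, and the same use of \ref{item:comparison} to handle the "wrong-signed" ones. Your added remark on nonnegativity and measurability of $\lambda^{\star,(i)}_\alpha$ is a harmless extra that the paper leaves implicit when invoking \Cref{lem:dual-certificate}.
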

\noindent
If the conditions~\ref{item:monotone}--\ref{item:outer} hold for every
\(\alpha\in(0,1)\), then \Cref{lem:dual-certificate} shows that
\(\min\{S,T\}\) is a valid \(p\)-value for the leave-one-out nulls
\(\nullCond^n\) in definition~\eqref{eq:loo-null}.
\begin{proof}
  First suppose that \(S(\bm{x})\le\alpha\).
  Fix $i \in [n]$.
  If \(x_i\le1\), then \ref{item:monotone} gives
  \[
  S\!\left(\bm{x}^{(i\leftarrow1)}\right)
  \le S(\bm{x})\le\alpha,
  \]
  so the switched multiplier \(\lambda^{\star,(i)}_\alpha(\bx_{-i}) =
  \lambda^{S,(i)}_\alpha(\bm{x}_{-i})\).
  If \(x_i>1\),
  condition~\ref{item:comparison} and that \(x_i-1>0\) give
  regardless of the value $\lambda^{\star,(i)}_\alpha$ that
  \begin{align*}
    (x_i-1)\lambda^{\star,(i)}_\alpha(\bm{x}_{-i})
    \ge
    (x_i-1)\lambda^{S,(i)}_\alpha(\bm{x}_{-i}).
  \end{align*}
  So
  \(\dual^\star_\alpha(\bm{x})\ge \dual^S_\alpha(\bm{x})\)
  if $S(\bx) \le \alpha$, and
  condition \ref{item:inner}
  gives $\dual^\star_\alpha(\bx) \ge 1$ on this event.

  Now suppose that \(S(\bm{x})>\alpha\).
  Fix $i \in [n]$.
  If \(x_i>1\), then \ref{item:monotone} gives
  \[
  \alpha<S(\bm{x})
  \le S\!\left(\bm{x}^{(i\leftarrow1)}\right),
  \]
  so \(\lambda^{\star,(i)}_\alpha(\bx_{-i}) = \lambda^{T,(i)}_\alpha(\bm{x}_{-i})\).
  If \(x_i\le 1\),
  condition \ref{item:comparison} and that \(x_i-1\le 0\) give
  once again that no matter the value of $\lambda^{\star,(i)}_\alpha$,
  \begin{align*}
    (x_i-1)\lambda^{\star,(i)}_\alpha(\bm{x}_{-i})
    \ge
    (x_i-1)\lambda^{T,(i)}_\alpha(\bm{x}_{-i}).
  \end{align*}
  Therefore
  \(
  \dual^\star_\alpha(\bm{x})
  \ge \dual^T_\alpha(\bm{x})
  \ge \one\{T(\bm{x})\le\alpha\}
  \)
  when $S(\bx) > \alpha$,
  by condition~\ref{item:outer}.
\end{proof}

\subsection{\(\min \{\nplr,\pBinPlus\}\) is a valid \(p\)-value}

We may apply \Cref{thm:general-switch} to show
that \(\min\{\nplr,\pBinPlus\}\) is a valid \(p\)-value for \(\nullCond^n\).
This pointwise minimum yields a more powerful $p$-value,
as neither statistic is pointwise smaller than the other.
The \(p\)-value \(\nplr\) is often more powerful when the evidence is spread
across several large observations.  For example,
\[
  \nplr(2, 2, 2, 2, 3, 3, 3, 3) \approx 0.00077 < 0.00091 \approx \pBinPlus(2, 2, 2, 2, 3, 3, 3, 3).
\]
On the other hand, \(\pBinPlus\) can be more powerful when the evidence is
concentrated in a few large observations:
\[
  \nplr(0, 0, 0, 0, 0, 0, 5, 5) \approx 0.94 > 0.50 \approx \pBinPlus(0, 0, 0, 0, 0, 0, 5,
  5).
\]
Thus the two statistics capture different
types of evidence against the null.

\begin{theorem}
  \label{thm:min-nplr-binplus-valid}
  The statistic \(\min\{\nplr,\pBinPlus\}\) is a valid \(p\)-value for
  \(\nullCond^n\).
\end{theorem}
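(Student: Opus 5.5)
We apply \Cref{thm:general-switch} with $S:=\nplr$, $T:=\pBinPlus$ and multipliers $\lambda^{S,(i)}_\alpha:=\lamNplr_\alpha$ from~\eqref{eq:def-lambda-nplr} and $\lambda^{T,(i)}_\alpha:=\lamBinPlus_\alpha$ from~\eqref{eq:def-lambda-binplus}, the same for every $i$. Once conditions \ref{item:monotone}--\ref{item:outer} are checked for every $\alpha\in(0,1)$, \Cref{lem:dual-certificate} gives validity of $\min\{\nplr,\pBinPlus\}$ for $\nullCond^n$. We take $S=\nplr$ rather than $S=\pBinPlus$ because of \ref{item:comparison}. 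Indeed, $\lamBinPlus_\alpha(\bm z)\le\alpha\polyRightInt_1(0,\bm z)=\alpha\int_0^1\prodPoly_{\bm z}$, which already dominates $\lamNplr_\alpha(\bm z)=\alpha\int_0^{\tau_\alpha(\bm z)}\prodPoly_{\bm z}$ in that branch of the minimum. So the pair can only be ordered with $\lamNplr_\alpha\le\lamBinPlus_\alpha$.

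Three of the conditions are routine.
\begin{itemize}
\item \ref{item:monotone}: for $t\in[0,1]$ each factor $1-t+tx_i$ is nondecreasing in $x_i$. Hence $\max_t\prodPoly_{\bm x}(t)$ is nondecreasing in each coordinate, and $\nplr$ is nonincreasing in each coordinate.
\item \ref{item:inner}: the proof of \Cref{thm:nplr-valid} shows $\dual^{\lamNplr}_\alpha(\bm x)\ge\alpha\prodPoly_{\bm x}(\tau_\alpha(\bm x))$. When $\nplr(\bm x)\le\alpha$, continuity gives $\alpha\prodPoly_{\bm x}(\tau_\alpha(\bm x))=1$, so $\dual^{\lamNplr}_\alpha(\bm x)\ge1$.
\item \ref{item:outer}: this is exactly what the proof of \Cref{thm:binplus-valid} establishes, by combining~\eqref{eq:binplus-common-endpoint-majorant} with~\eqref{eq:binplus-certificate-2}.
\end{itemize}

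It remains to verify \ref{item:comparison}, namely $\lamNplr_\alpha(\bm z)\le\min\{(1-\alpha)\polyLeftInt_1(0,\bm z),\alpha\polyRightInt_1(0,\bm z)\}$. The second term in the minimum is handled by $\tau_\alpha(\bm z)\le1$ and $\prodPoly_{\bm z}\ge0$ on $[0,1]$. For the first term, if $z_{\max}\le1$ then $\polyLeftInt_1(0,\bm z)=\infty$ and there is nothing to prove. Otherwise, writing $m=z_{\max}>1$ and $\tau=\tau_\alpha(\bm z)$, we must show
\[
  \alpha\int_0^{\tau}\prodPoly_{\bm z}(t)\,dt\le(1-\alpha)\int_{-1/(m-1)}^0\prodPoly_{\bm z}(t)\,dt .
\]
This inequality is the main obstacle; the information available is $\alpha\prodPoly_{\bm z}<1$ on $[0,\tau)$.

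The first line of attack uses the same telescoping that underlies~\eqref{eq:binplus-left-normalization}. Adjoin a phantom coordinate equal to $m$ to $\bm z$. The left-normalization identity applied to the augmented vector is then the integrated form of $\frac{d}{dt}\bigl[(1+t(m-1))\prodPoly_{\bm z}(t)\bigr]$ on $[-1/(m-1),0]$:
\[
  (m-1)\int_{-1/(m-1)}^0\prodPoly_{\bm z}+\int_{-1/(m-1)}^0(1+t(m-1))\,\prodPoly_{\bm z}'(t)\,dt=1 .
\]
The analogous identity on $[0,\tau]$ expresses $\int_0^\tau\prodPoly_{\bm z}$ through $\prodPoly_{\bm z}(\tau)\le1/\alpha$. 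The plan is to compare the two identities term by term, using the log-concavity of $\prodPoly_{\bm z}$ on $[-1/(m-1),1]$, where it is a product of nonnegative affine factors. Log-concavity bounds the growth of $\prodPoly_{\bm z}$ to the right of $0$ by its decay to the left. A natural reduction is to the extremal two-point configuration $\bm z\in\{0,m\}^{n-1}$. There both sides are incomplete beta integrals, and the inequality should reduce to the Clopper--Pearson fact that the binomial tail dominates the corresponding likelihood-ratio-type quantity.

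If this direct comparison fails in general, the fallback is to weaken $S$'s multiplier. One replaces $\lamNplr_\alpha$ by $\min\{\lamNplr_\alpha,\lamBinPlus_\alpha\}$ and rechecks that \ref{item:inner} still holds. This uses the fact that on $\{\nplr\le\alpha\}$ the binomial-type bound $D_{\bm x}(\alpha)$ of~\eqref{eq:binplus-common-endpoint-majorant} is also at least $1$ whenever the minimum switches branches.
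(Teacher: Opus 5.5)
The roles of the two statistics are reversed, and condition \ref{item:comparison} fails in your orientation. With $S=\nplr$ you need $\lamNplr_\alpha\le\lamBinPlus_\alpha$, and your justification is a non sequitur. That the branch $\alpha\polyRightInt_1(0,\bm z)$ dominates $\lamNplr_\alpha(\bm z)$ says nothing about the minimum, which can be attained by the other branch $(1-\alpha)\polyLeftInt_1(0,\bm z)$.

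In fact the inequality you isolate as the main obstacle, $\alpha\int_0^{\tau}\prodPoly_{\bm z}\le(1-\alpha)\int_{-1/(m-1)}^0\prodPoly_{\bm z}$, is false. Take $n=2$, $\alpha=1/2$, $\bm z=(3)$. Then $\prodPoly_{\bm z}(t)=1+2t$ and $\tau_\alpha(\bm z)=1/2$, so $\lamNplr_\alpha(\bm z)=\frac12\int_0^{1/2}(1+2t)\,dt=3/8$. On the other side, $(1-\alpha)\polyLeftInt_1(0,\bm z)=\frac12\int_{-1/2}^0(1+2t)\,dt=1/8$ and $\alpha\polyRightInt_1(0,\bm z)=1$, so $\lamBinPlus_\alpha(\bm z)=1/8<3/8$. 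More generally, for $\bm z=(m)$ with $m>1/\alpha$ the ratio $\lamNplr_\alpha/\lamBinPlus_\alpha$ equals $(1+\alpha)/\alpha$.

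The log-concavity you plan to use proves the reverse inequality. When $\tau:=\tau_\alpha(\bm z)<1$, $\log\prodPoly_{\bm z}$ climbs from $0$ to $\log(1/\alpha)$ on $[0,\tau]$. So it lies above its secant $t\log(1/\alpha)/\tau$ on $[0,\tau]$ and below it on $[-1/(m-1),0]$. Integrating gives $(1-\alpha)\polyLeftInt_1(0,\bm z)\le(1-\alpha)\tau/\log(1/\alpha)\le\alpha\int_0^\tau\prodPoly_{\bm z}=\lamNplr_\alpha(\bm z)$.

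The fix is to swap roles, as the paper does: take $S=\pBinPlus$ with $\lamBinPlus_\alpha$ and $T=\nplr$ with $\lamNplr_\alpha$. Then the four conditions hold as follows.
\begin{itemize}
\item \ref{item:monotone} is the coordinatewise monotonicity in Proposition~\ref{prop:binplus-ratio-monotonicity}.
\item \ref{item:inner} follows from $D_{\bm x}(\alpha)=1$ on $\{\pBinPlus\le\alpha\}$ together with~\eqref{eq:binplus-certificate-2}.
\item \ref{item:outer} is the certificate from Theorem~\ref{thm:nplr-valid}.
\item \ref{item:comparison}, now $\lamBinPlus_\alpha\le\lamNplr_\alpha$, is immediate from the $\polyRightInt$ branch when $\tau_\alpha(\bm z)=1$, and is the secant bound above when $\tau_\alpha(\bm z)<1$.
\end{itemize}

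Your fallback does not rescue your orientation. Since $\lamBinPlus_\alpha\le\lamNplr_\alpha$ everywhere, $\min\{\lamNplr_\alpha,\lamBinPlus_\alpha\}=\lamBinPlus_\alpha$. Both branches of the switched multiplier therefore equal $\lamBinPlus_\alpha$, and the resulting certificate is just that of $\pBinPlus$. That certificate cannot dominate the indicator on $\{\nplr\le\alpha<\pBinPlus\}$.

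For instance, take $\bm x=(2,2,2,2,3,3,3,3)$. The maximum is repeated, so~\eqref{eq:binplus-certificate-2} holds with equality and the certificate equals $D_{\bm x}(\alpha)$. The map $u\mapsto D_{\bm x}(u)$ has derivative $\sum_{i\in I_u}(x_i-1)(\polyLeftInt_i+\polyRightInt_i)>0$ for $u<\pBinPlus(\bm x)$, because every $x_i>1$. It equals $1$ at $u=\pBinPlus(\bm x)$, so $D_{\bm x}(\alpha)<1$ whenever $\alpha<\pBinPlus(\bm x)$. Here $\nplr(\bm x)\approx0.00077<0.00091\approx\pBinPlus(\bm x)$, so any $\alpha$ between these values is a point where the fallback certificate fails.
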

\begin{proof}
  Fix \(\alpha\in(0,1)\).
  Recall the definitions~\eqref{eq:def-lambda-nplr}
  and~\eqref{eq:def-lambda-binplus} of \(\lamBinPlus_\alpha\) and
  \(\lamNplr_\alpha\), respectively.
  We apply \Cref{thm:general-switch} with
  \[
  S=\pBinPlus,
  \qquad
  T=\nplr,
  \qquad
  \lambda^S_\alpha=\lamBinPlus_\alpha,
  \qquad
  \lambda^T_\alpha=\lamNplr_\alpha.
  \]
  \Cref{prop:binplus-ratio-monotonicity} gives the
  monotonicity~\ref{item:monotone} of \(\pBinPlus\), while the
  multipliers~\eqref{eq:def-lambda-nplr} and~\eqref{eq:def-lambda-binplus}
  provide the dual certificate~\eqref{eq:intro-dual-certificate},
  guaranteeing conditions~\ref{item:inner} and \ref{item:outer},
  respectively.
  It remains to verify condition~\ref{item:comparison} of
  Definition~\ref{def:combinable}, that 
  \(\lamBinPlus_\alpha(\bm{z}) \le \lamNplr_\alpha(\bm{z}) \)
  for \( \bm{z} \in \Rb_+^m \).
  Let \(\bm{z}\in\Rb_+^m\) and recall the definition~\eqref{eq:def-tau} of
  \(\tau_\alpha(\bm{z})\).
  Since \(\alpha\prodPoly_{\bm{z}}(0)=\alpha<1\) and \(\prodPoly_{\bm{z}}\)
  is continuous, we must have \(\tau_\alpha(\bm{z})>0\).

  If \(\tau_\alpha(\bm{z})=1\), then
  definitions~\eqref{eq:def-lambda-binplus}
  and~\eqref{eq:def-lambda-nplr} of the dual multipliers immediately give
  \[
  \lamBinPlus_\alpha(\bm{z})
  \le
  \alpha\int_0^1\prodPoly_{\bm{z}}(t)\,dt
  =
  \lamNplr_\alpha(\bm{z}).
  \]
  Now consider the case where \(0<\tau_\alpha(\bm{z})<1\).
  In this case \(\prodPoly_{\bm{z}}\bigl(\tau_\alpha(\bm{z})\bigr)=\frac1\alpha\)
  and \(z_{\max} > 1\).
  On the interval \(\left(-(z_{\max}-1)^{-1},1\right)\), \(\prodPoly_{\bm{z}}\) is a
  product of positive linear factors, so \(\log\prodPoly_{\bm{z}}(t)\) is concave.
  Define \( h(t) = t \cdot \frac{\log (1/\alpha)}{\tau_{\alpha}(\bm{z})} \),
  which satisfies \( h(0) = 0 = \log \prodPoly_{\bm{z}}(0) \) and \(
  h(\tau_{\alpha}(\bm{z})) = \log \frac{1}{\alpha} = \log
  \prodPoly_{\bm{z}}(\tau_{\alpha}(\bm{z})) \), so
  that $h(t)$ is the secant line
  of $\log \prodPoly_{\bm{z}}(t)$ from $t=0$ to $t=\tau_{\alpha}(\bm{z})$.
  Concavity of \( \log\prodPoly_{\bm{z}}(t) \) implies
  that \(\log\prodPoly_{\bm{z}}(t) \ge h(t)\) for
  \(t \in [0,\tau_\alpha(\bm{z})]\), and \(\log\prodPoly_{\bm{z}}(t) \le h(t)\)
  outside this region.
  Exponentiating and integrating these two bounds yields
  \begin{align*}
    \int_{-(z_{\max}-1)^{-1}}^0\prodPoly_{\bm{z}}(t)\,dt
    & \le
    \int_{-(z_{\max}-1)^{-1}}^0 \exp\left(t \cdot
    \frac{\log(1/\alpha)}{\tau_{\alpha}(\bm{z})}\right)\, dt
    \\
    & =
    \frac{\tau_\alpha(\bm{z})}{\log(1/\alpha)} \left( 1 - \exp\left(-\frac{\log(1/\alpha)}{\tau_{\alpha}(\bm{z})(z_{\max}-1)}\right) \right)
    \le
    \frac{\tau_\alpha(\bm{z})}{\log(1/\alpha)}
  \end{align*}
  and
  \begin{align*}
    \int_0^{\tau_\alpha(\bm{z})}\prodPoly_{\bm{z}}(t)\,dt
    & \ge
    \int_0^{\tau_\alpha(\bm{z})} \exp\left(t \cdot
    \frac{\log(1/\alpha)}{\tau_{\alpha}(\bm{z})}\right)\,dt
    \\
    & =
    \frac{\tau_\alpha(\bm{z})}{\log(1/\alpha)}
    \left(\exp\left(\log(1/\alpha)\right)-1\right)
    =
    \frac{\tau_\alpha(\bm{z})(1/\alpha-1)}{\log(1/\alpha)}.
  \end{align*}
  Hence
  \begin{align*}
    \lamBinPlus_\alpha(\bm{z})
    & \le
    (1-\alpha)
    \int_{-1/(z_{\max}-1)}^0\prodPoly_{\bm{z}}(t)\,dt \\
    & \le
    \frac{(1-\alpha)\tau_\alpha(\bm{z})}{\log(1/\alpha)}                 
    =
    \frac{\alpha\tau_\alpha(\bm{z})(1/\alpha-1)}
         {\log(1/\alpha)}                                           
         \le
         \alpha\int_0^{\tau_\alpha(\bm{z})}\prodPoly_{\bm{z}}(t)\,dt
         =\lamNplr_\alpha(\bm{z}).
  \end{align*}
  Thus \(\lamBinPlus_\alpha(\bm{z})\le\lamNplr_\alpha(\bm{z})\) for
  \(\bm{z}\in\Rb_+^m\), verifying
  condition~\ref{item:comparison}.
\end{proof}

\section{Asymptotic optimality}
\label{sec:asymptotics}

This section studies the asymptotic power achievable when testing the
one-sided composite null
$\Eb X \le 1$
against sequences of point alternatives.
Once we develop fundamental limits, we show when the
$p$-values from the preceding sections achieve, and fail to achieve, 
optimal behavior.
For each \(n\), we observe i.i.d.\ data with common law \(P\) and consider
\begin{equation}
  \label{eqn:composite-iid-null}
  H_0 : \Eb_P X \le 1
  \qquad\text{versus}\qquad
  H_{1,n}:P=Q_n,\quad \Eb_{Q_n}X>1.
\end{equation}
Under \(H_0\), the sample law belongs to the i.i.d.\ null~\eqref{eq:iid-null},
\begin{equation*}
  \nullIid^n := \left\{P^n : P \in \mc{P}(\Rb_+),
  ~ \Eb_P[X] \le 1\right\} 
\end{equation*}
This difference from the leave-one-out null \( \nullCond^n \) used for the
finite-sample validity results serves two purposes.
First, it makes the optimality results stronger as comparisons: since competing
procedures need to maintain validity only over a smaller null class, more
statistics are admissible competitors.
Second, the i.i.d.\ formulation yields a cleaner asymptotic theory.

Uniform level-$\alpha$ tests over \(\nullIid^n\) are functions
\(\varphi_n:\Rb_+^n\to[0,1]\) such that
\begin{equation*}
  \sup_{P \in\nullIid}\Eb_{P^n}[\varphi_n] \le \alpha
\end{equation*}
for all $n$.
We define the asymptotic power
against point alternatives $Q_n$ by
\(\liminf_{n \to \infty}\Eb_{Q_n^n}[\varphi_n]\),
which is often a proper limit, and our impossibility results
will demonstrate upper bounds on the
best possible power $\limsup_n \Eb_{Q_n^n}[\varphi_n]$.

We study optimality in two regimes.
\Cref{sec:asymptotics-hellinger} treats fully nonparametric alternatives, with
no moment or tail assumptions, and shows that the squared Hellinger distance to
the null \(H^2(Q_n, \nullIid)\) determines the zero–one detectability boundary.
If $n H^2(Q_n,\nullIid)\to0$, no uniformly valid test has asymptotic power above
its level; if $nH^2(Q_n, \nullIid)\to\infty$, there exist uniformly valid tests with
power tending to one.
The impossibility result is standard.
The achievability result, however, exploits the structure of \(\nullIid\) to
compute the Hellinger projection of \(Q_n\) onto \(\nullIid\) explicitly and
construct from it a uniformly valid oracle test attaining asymptotic
power one whenever $n H^2(Q_n, \nullIid) \to \infty$.
More meaningfully, we also show that \(\nplr\), \(\pBinPlus\), and their
pointwise minimum \(\min\{\nplr,\pBinPlus\}\) attain asymptotic power one in
this case without knowledge of the point alternatives $Q_n$.

By contrast, \Cref{sec:asymptotics-regular} adopts classical local
nonparametrics~\citep[cf.][Ch.~25]{VanDerVaart98} to
provide a finer-grained asymptotic optimality theory.
By considering regular alternatives $Q_n$ approaching $\nullIid$ at rate
$1/\sqrt{n}$, we may compare the limiting power of \(\nplr\), \(\pBinPlus\),
and \(\min\{\nplr,\pBinPlus\}\) to the optimal power in the associated
limiting Gaussian experiment.
We show that \(\pBinPlus\) and \(\min\{\nplr,\pBinPlus\}\) attain this
power, while $\nplr$ does not.

\paragraph{Notation}
We adopt standard asymptotic convergence notation.
For positive \(a_\delta\) and \(b_\delta\),
we use
\( a_\delta \asymp b_\delta \) to mean that there exist constants
\(0<c_1\le c_2<\infty\) such that, for sufficiently
small \(\delta > 0\), \( c_1 b_\delta \le a_\delta \le c_2 b_\delta \).
Analogous definitions apply to sequences $(a_n),(b_n)$ as \(n\to\infty\).
We use \(\convAS\), \(\convP\), and \(\convD\) to denote convergence almost
surely, in probability, and in distribution, respectively.
For random variables
\(Z_n\) and \(A_n\), \(Z_n=o_p(A_n)\)
means $Z_n/A_n \cp 0$, while \(Z_n=O_p(A_n)\) means
\begin{equation*}
  \lim_{M \to \infty}
  \limsup_{n\to\infty} \Pb\{|Z_n| > Ma_n\} = 0.
\end{equation*}

\subsection{Hellinger distance to the null and zero-one detectability}
\label{sec:asymptotics-hellinger}

Establishing the zero-one boundary requires two complementary results.
Firstly, we characterize the distance of
an alternative \(Q\) to the null class
\begin{align*}
  \nullbasic
  = \{P \in \mc{P}(\Rb_+) : \Eb_P X \le 1\}.
\end{align*}
Secondly, we will show that separation from $\nullbasic$ implies the existence
of a test with non-trivial power that remains valid uniformly over the
entire composite null.
The squared Hellinger distance between the alternative $Q$ and
the nulls determines both of these.
We therefore develop the (Hellinger) projection of a distribution
$Q$ onto $\nullbasic$, which will show allows us to characterize
precisely when optimal tests exist in the composite
null~\eqref{eqn:composite-iid-null}.

\subsubsection{The Hellinger projection}

For probability measures \(Q\) and \(P\) on \(\Rb_+\), define their
squared Hellinger distance
\begin{align*}
  H^2(Q,P)
  &:=
    \frac12
    \int
    \left(\sqrt{dQ} - \sqrt{dP} \right)^2
    =
    1- \int \sqrt{dP dQ}
    .
\end{align*}
For a law \(Q\) and class $\mc{P}$, define the squared distance
\begin{equation}
      \label{eq:hellinger-distance-to-null}
  H^2(Q, \mc{P})
  :=
  \inf_{P\in \mc{P}} H^2(Q,P).
\end{equation}
For $\phi(u) = \frac{1}{2} (\sqrt{u} - 1)^2$, the
optimization~\eqref{eq:hellinger-distance-to-null} corresponds to projecting
$Q$ onto $\mc{P}$, measuring error via the
\(\phi\)-divergence~\citep{AliSi66,Csiszar67}.
By developing the minimizers in the
problem~\eqref{eq:hellinger-distance-to-null},
we will be able to characterize the precise consequences for the existence
of optimal tests in the problem~\eqref{eq:intro-hypothesis}.
While such (infinite-dimensional) projection problems enjoy a substantial
duality theory~\citep{BorweinLe91,CsiszarMa12},
the generality of the set $\nullbasic$ (a one-sided moment constraint
with unbounded support) appears to require some care in characterizing
the infimum~\eqref{eq:hellinger-distance-to-null}.

To develop the Hellinger projection~\eqref{eq:hellinger-distance-to-null}
of $Q$ onto $\nullbasic$, suppose heuristically
that \(P\) and \(Q\) have densities \(p\) and \(q\) with respect to a common
measure \(\nu\).
This yields the problem
\begin{equation*}
  \begin{array}{rl}
    \maximize & \int\sqrt{pq}\,d\nu \\
    \subjectto & \int p\,d\nu=1,\quad
    \int(x-1)p(x)\,d\nu(x) \le 0,
    ~~~ p \ge 0,
  \end{array}
\end{equation*}
which, incorporating the nonnegativity $p \ge 0$ directly,
has Lagrangian
\begin{equation*}
  \mathcal{L}(p,\lambda,\eta)
  = \lambda+\int\left[ \sqrt{p(x)q(x)}-\{\lambda+\eta(x-1)\}p(x) \right]d\nu(x).
\end{equation*}
Treating $[p(x)]_{x \in \Rb_+}$ as a variable and maximizing over
$p(x) \ge 0$,
pointwise stationarity gives \( \tfrac12\sqrt{q(x)/p(x)}=\lambda+\eta(x-1)\).
After reparameterizing, \({p(x)}/{q(x)} \propto{1}/(1+t(x-1))^{2}\), which is also the
interior form obtained by specializing \citet[Corollary~3.8]{BroniatowskiKe06}
to $g(x)=x-1$ and $\phi(u)=\tfrac12(\sqrt{u}-1)^2$.  Consequently,
\begin{equation*}
    1 - H^2(Q,P) = \int\sqrt{pq}\,d\nu \propto \int \frac{q(x)}{1+t(x-1)}\,d\nu(x) .
\end{equation*}

Motivated by this heuristic that the projection
$P$ of $Q$ onto $\nullbasic$ should have form
$dP(x) \propto dQ(x) / (1 + t(x - 1))^2$, we define
the scalar affinity function
\begin{equation*}
  A_Q(t) := \Eb_Q\left[\frac{1}{1 + t(X-1)}\right],
\end{equation*}
where we take $1/0=+\infty$.
For \(x\ge0\), the extended-valued function
\(t\mapsto (1+t(x-1))^{-1}\) is closed-convex on \([0,1]\).
Fatou's lemma therefore implies that \(A_Q\) is closed convex.
Since \(A_Q(0)=1\) and \([0,1]\) is compact, \(A_Q\) attains its minimum
on \([0,1]\).
Letting $\pointmass_x$ indicate a point mass at $x$,
the affinity functional allows us to characterize
the projection of $Q$ onto $\nullbasic$:

\begin{lemma}[Hellinger projection]
  \label{lem:hellinger-projection}
  Let \(Q\) be a probability on \(\Rb_+\),
  \(t_Q\in\operatorname*{argmin}_{0\le t\le1}A_Q(t)\), and
  $A_Q^\star:=A_Q(t_Q)=\min_{0\le t\le1}A_Q(t)$.
  Then
  \begin{align}
    H^2(Q,\nullbasic)
    = \inf_{P\in\nullbasic}H^2(Q,P)
    = 1-\sqrt{A_Q^\star}.
    \label{eq:hellinger-radius-dual}
  \end{align}
  Furthermore, there
  exists \( P_Q^\star\in\nullbasic\) attaining this infimum, and
  \begin{align}
    P_Q^\star(dx)
    =
    \left\{
    \begin{aligned}
      &Q(dx)
      && \text{if } t_Q=0,
      \\
      &\dfrac{1}{A_Q^\star\{1+t_Q(x-1)\}^2}\,Q(dx)
      && \text{if } t_Q \in(0,1),
      \\
      &\dfrac{x^{-2}}{A_Q^\star}\,Q(dx)
      +
      \left(
      1-\dfrac{\Eb_Q[X^{-2}]}{A_Q^\star}
      \right) \pointmass_0(dx)
      && \text{if } t_Q=1.
    \end{aligned}
    \right.
    \label{eq:hellinger-nearest-null}
  \end{align}
  In the case \(t_Q=1\),
  \(\Eb_Q[X^{-2}] \le A_Q^\star =\Eb_Q[X^{-1}] <\infty.\)
\end{lemma}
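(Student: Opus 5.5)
We prove two inequalities. The lower bound $\int\sqrt{dP\,dQ}\le\sqrt{A_Q^\star}$ holds for every $P\in\nullbasic$. The upper bound comes from showing that the candidate $P_Q^\star$ in \eqref{eq:hellinger-nearest-null} is feasible and attains $\int\sqrt{dP_Q^\star dQ}=\sqrt{A_Q^\star}$.

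\emph{Weak duality.} Fix $P\in\nullbasic$ and $t\in[0,1]$. Let $\nu=P+Q$, with densities $p,q$, and write $w_t(x)=1+t(x-1)\ge0$. By Cauchy--Schwarz,
\begin{equation*}
  \int\sqrt{pq}\,d\nu
  =\int \sqrt{w_t p}\,\sqrt{q/w_t}\,d\nu
  \le\Bigl(\int w_t\,dP\Bigr)^{1/2}\Bigl(\int \frac{q}{w_t}\,d\nu\Bigr)^{1/2}
  \le \sqrt{A_Q(t)} .
\end{equation*}
The last step uses $\int w_t\,dP=1+t(\Eb_P X-1)\le 1$. Points with $w_t=0$ occur only at $x=0$ when $t=1$. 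There the left integrand $\sqrt{pq}$ either vanishes or $A_Q(1)=\infty$, so the bound still holds with the convention $1/0=\infty$. Taking the infimum over $t$ gives $1-H^2(Q,P)\le\sqrt{A_Q^\star}$.

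\emph{Attainment.} This step uses first-order optimality of $t_Q$ for the convex function $A_Q$ on $[0,1]$. We use the derivative $A_Q'(t)=-\Eb_Q[(X-1)/w_t(X)^2]$, justified by monotone or dominated convergence for convex integrands via one-sided difference quotients.
\begin{itemize}
  \item If $t_Q=0$: the right derivative at $0$ is nonnegative, so $\Eb_Q[X-1]\le 0$, $Q\in\nullbasic$, $P_Q^\star=Q$, and $A_Q^\star=1$. (A monotone-convergence argument is needed here to handle $\Eb_Q X=\infty$, which forces a negative derivative.)
  \item If $t_Q\in(0,1)$: $A_Q^\star<\infty$ and the derivative vanishes, so $\Eb_Q[(X-1)/w^2]=0$. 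Using $1/w=(1/w^2)\cdot(1+t(X-1))$, we get $\Eb_Q[1/w^2]=A_Q^\star$. Hence $P_Q^\star$ is a probability measure with mean exactly $1$, and $\int\sqrt{dP_Q^\star dQ}=\Eb_Q[1/w]/\sqrt{A_Q^\star}=\sqrt{A_Q^\star}$.
  \item If $t_Q=1$: finiteness $A_Q^\star=\Eb_Q[X^{-1}]<\infty$ forces $Q\{0\}=0$. The left derivative at $1$ is $\le0$, giving $\Eb_Q[(X-1)/X^2]\ge0$, i.e.\ $\Eb_Q[X^{-1}]\ge\Eb_Q[X^{-2}]$. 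This yields the stated inequality and a nonnegative atom at $0$. The mean of $P_Q^\star$ is then $\Eb_Q[X^{-1}]/A_Q^\star=1$. Since $Q\{0\}=0$, the atom contributes nothing to the affinity, which again equals $\Eb_Q[X^{-1}]/\sqrt{A_Q^\star}=\sqrt{A_Q^\star}$.
\end{itemize}

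\emph{Main obstacle.} The difficulty is rigorously justifying the one-sided derivative conditions when $A_Q$ may be infinite nearby or $Q$ has heavy tails. For the left derivative at $t_Q\in(0,1]$, the difference quotients $\bigl(A_Q(t_Q)-A_Q(t_Q-h)\bigr)/h$ are monotone in $h$ because the integrand is convex in $t$, so monotone convergence applies. For the right derivative we use monotonicity in the other direction, together with $A_Q(t_Q)<\infty$, which gives integrability of the quotients. This produces the inequality $\Eb_Q[(X-1)/w^2]\le0$ on the left and $\ge0$ on the right, hence equality in the interior.
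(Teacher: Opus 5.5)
Your proposal is correct and follows essentially the same route as the paper: Cauchy--Schwarz against the weight $1+t(x-1)$ gives the lower bound, and first-order optimality of $t_Q$ (via monotone convergence of one-sided difference quotients) then verifies the candidate $P_Q^\star$ in each of the three cases. The differences are cosmetic: you apply Cauchy--Schwarz directly at $t=1$ rather than restricting to $t<1$ and invoking $\lim_{t\uparrow1}A_Q(t)=A_Q(1)$, and you use the left derivative at $1$ where the paper uses the explicit difference $A_Q(1/(1+\epsilon))-A_Q(1)$; in your last paragraph the signs are swapped (a left derivative $\le 0$ gives $\Eb_Q[(X-1)/w^2]\ge 0$, and a right derivative $\ge 0$ gives $\le 0$), but equality follows either way.
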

\noindent
We defer the proof to Appendix~\ref{sec:proof-hellinger-projection}.

\subsubsection{The existence of powerful tests}

The Hellinger projection $P_{Q_n}^\star$ of an alternative $Q_n$ onto the
nulls $\nullbasic$ allows us to demonstrate the existence of
finite-sample but asymptotically powerful tests.
It is standard~\cite[e.g.][]{LeCam86,Tsybakov09} that if
\(nH^2(Q_n,\nullIid) \to 0\), then every uniformly valid level-\(\alpha\)
test has asymptotic power at most \(\alpha\).
On the other hand, although \(nH^2(Q_n,P_{Q_n}^\star) \to \infty\)
guarantees the existence of level-\(\alpha\) tests with power approaching
$1$ in the simple hypothesis problem of $P_{Q_n}^\star$ against $Q_n$, this
need not extend to a composite null.
Indeed, as is familiar, uniform validity over rich nonparametric null
classes often precludes nontrivial power
altogether~\citep{BahadurSa56,LeCam73,Barron89,ShahPe20}.
In spite of these impossibility results, we now show how \(P_{Q_n}^\star\)
induces a finite-sample valid test under the composite null
\(\nullCond^n\), while attaining asymptotic power one whenever
\(nH^2(Q_n,\nullIid)\to \infty.\)

To motivate this test, let \(t_n:=t_{Q_n}\) minimize the affinity
$A_{Q_n}(t)$ (recall Lemma~\ref{lem:hellinger-projection}), suppose that
\(t_n\in(0,1)\), and consider the likelihood ratio
\begin{equation*}
  \frac{dQ_n^n}{d(P^\star_{Q_n})^n}(\bm X)
  =
  (A_{Q_n}^\star)^n
  \left[
    \prod_{i=1}^n\{1+t_n(X_i-1)\}
  \right]^2
\end{equation*}
between \(Q_n^n\) and \((P^\star_{Q_n})^n\).
Then the likelihood ratio test has the form
\begin{equation}
  \varphi^{\textup{lr}}_n(\Xb) := \one \left\{
  \prod_{i=1}^n\{1+t_n(X_i-1)\}\ge\theta_n
  \right\}
  \label{eqn:lrtest-unimplementable}
\end{equation}
for some threshold \(\theta_n \in \Rb\).
Because $\prod_{i=1}^n (1 + t(X_i - 1))$
has expectation at most one under every law in
\(\nullCond^n\), Markov's inequality shows that
\(\theta_n=1/\alpha\) guarantees uniform level $\alpha$ under
$\nullCond^n$; the next theorem demonstrates asymptotic power
for the (unimplementable) test~\eqref{eqn:lrtest-unimplementable}.

\begin{theorem}[Zero-one detectability]
  \label{thm:np-hellinger-scale}
  Let \(\alpha\in(0,1)\) and
  \((Q_n)\) be a sequence of probability laws on \(\Rb_+\) with
  \(\Eb_{Q_n}X>1\).
  \begin{enumerate}[label=(\roman*),leftmargin=*]
  \item \label{item:no-detection}
    Let \(\varphi_n: \Rb_+^n\to[0,1]\) satisfy
    \(\sup_{P \in \nullbasic}\Eb_{P^n}[\varphi_n]\le\alpha.\)
    Then
    \begin{align*}
      \Eb_{Q_n^n} [\varphi_n]
      \le
      \alpha+
      \sqrt{
        1-
        \{1-H^2(Q_n,\nullbasic)\}^{2n}
      },
    \end{align*}
    and \(nH^2(Q_n,\nullbasic) \to 0\) implies that
    \(\limsup_{n\to\infty}\Eb_{Q_n^n}[\varphi_n] \le \alpha\).
  \item \label{item:asymptotic-detection}
    Choose any \(t_n\in\operatorname*{argmin}_{0\le t\le1}A_{Q_n}(t).\)
    Then \(0 < t_n \le 1\), and the likelihood ratio
    test~\eqref{eqn:lrtest-unimplementable} with the choice $\theta_n =
    1/\alpha$ has level \(\sup_{P \in \nullbasic}\Eb_{P^n}[\lrtest_n]
    \le \alpha\) and power
    \begin{align*}
      1-\Eb_{Q_n^n} [\lrtest_n]
      \le \frac{ \{1-H^2(Q_n,\nullbasic)\}^{2n}}{\alpha}.
    \end{align*}
    Consequently, \( nH^2(Q_n,\nullbasic)\to\infty \) implies
    that \(\Eb_{Q_n^n}[\lrtest_n] \to1.\)
  \end{enumerate}
\end{theorem}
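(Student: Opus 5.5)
Part (i) is a two-point (Le Cam) bound against the Hellinger projection. By \Cref{lem:hellinger-projection} there is \(P^\star_n:=P^\star_{Q_n}\in\nullbasic\) with \(H^2(Q_n,P^\star_n)=H^2(Q_n,\nullbasic)\). For any test \(\varphi_n\) with values in \([0,1]\),
\[
  \Eb_{Q_n^n}[\varphi_n]\le \Eb_{(P_n^\star)^n}[\varphi_n]+\|Q_n^n-(P_n^\star)^n\|_{\mathrm{TV}}\le\alpha+\|Q_n^n-(P_n^\star)^n\|_{\mathrm{TV}}.
\]
We then use the standard bound \(\|P-Q\|_{\mathrm{TV}}\le\sqrt{1-(1-H^2(P,Q))^2}\), which follows from Cauchy--Schwarz applied to \(\int|dP-dQ|=\int|\sqrt{dP}-\sqrt{dQ}|(\sqrt{dP}+\sqrt{dQ})\). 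The affinity \(1-H^2\) tensorizes: \(1-H^2(Q^n,P^n)=(1-H^2(Q,P))^n\). Combining these gives the displayed bound. If \(nH^2\to0\), then \((1-H^2)^{2n}\to1\), so the limsup is at most \(\alpha\).

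For part (ii), first check \(t_n>0\). The derivative of \(A_{Q_n}\) at \(0\) is \(A_{Q_n}'(0^+)=-\Eb_{Q_n}[X-1]<0\), since \(\Eb_{Q_n}X>1\). To justify this, use monotone convergence on the difference quotients \(\bigl((1+t(x-1))^{-1}-1\bigr)/t=-(x-1)/(1+t(x-1))\). These increase to \(-(x-1)\) as \(t\downarrow0\) for \(x>1\), and they are bounded for \(x\le1\) when \(t\le1/2\). If \(\Eb X=\infty\), the derivative is \(-\infty\). So \(0\) is not a minimizer, and \(t_n\in(0,1]\).

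Level follows from Markov's inequality. For \(P\in\nullbasic\) and fixed \(t\in[0,1]\), independence gives \(\Eb_{P^n}\prod_i(1+t(X_i-1))=(1+t(\Eb_P X-1))^n\le1\), so \(P^n\{\prod\ge1/\alpha\}\le\alpha\).

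For power, bound the type II error by Markov applied to the inverse square root:
\[
  Q_n^n\Bigl\{\prod_i(1+t_n(X_i-1))<1/\alpha\Bigr\}\le \Eb_{Q_n^n}\Bigl[\min\bigl\{1,\ \alpha^{-1/2}\textstyle\prod_i(1+t_n(X_i-1))^{-1/2}\bigr\}\Bigr].
\]
Using the square root rather than the first power is what makes the exponent come out as \(2n\). It gives at most \(\alpha^{-1/2}\,\Eb_{Q_n}[(1+t_n(X-1))^{-1/2}]^n\), and Jensen gives \(\Eb_{Q_n}[(1+t_n(X-1))^{-1/2}]\le A_{Q_n}(t_n)^{1/2}=(A^\star_{Q_n})^{1/2}\). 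By \Cref{lem:hellinger-projection}, \((A^\star)^{1/2}=1-H^2(Q_n,\nullbasic)\), so this route yields \(\alpha^{-1/2}(1-H^2)^{n}\), not the stated bound. To get the stated bound, apply Markov with the full inverse instead: \(Q_n^n\{\prod<1/\alpha\}\le\alpha^{-1}\Eb_{Q_n^n}\prod_i(1+t_n(X_i-1))^{-1}=\alpha^{-1}(A^\star)^n=\alpha^{-1}(1-H^2)^{2n}\). This is exactly the displayed inequality, so I will use this version.

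Since \((1-H^2)^{2n}\le e^{-2nH^2}\to0\) when \(nH^2\to\infty\), the power tends to one. The case \(t_n=1\) needs no special treatment, because the factors \(X_i\) may vanish and \(1/0=+\infty\) is allowed in \(A_{Q}\).

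The main technical care is the derivative argument showing \(t_n>0\) when \(\Eb X\) may be infinite, plus the tensorization step in (i). Everything else is direct from \Cref{lem:hellinger-projection}.
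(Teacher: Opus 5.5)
Your proof is correct and is essentially the paper's argument: the Hellinger projection with tensorization and $\|P-Q\|_{\mathrm{TV}}\le\sqrt{1-(1-H^2)^2}$ for (i), then Markov's inequality for the level and Markov applied to $\prod_i\{1+t_n(X_i-1)\}^{-1}$ for the power. That product has mean $(A^\star_{Q_n})^n=\{1-H^2(Q_n,\nullbasic)\}^{2n}$, and your right-derivative check that $t_n>0$ matches the paper's $A_Q'(0)=1-\Eb_Q X$ from its affinity lemma.

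You can delete the square-root detour; its claim that it produces the exponent $2n$ is backwards. Also, the difference quotients decrease rather than increase as $t\downarrow0$, though your monotone-convergence conclusion is unaffected.
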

\noindent
We defer the proof to Appendix~\ref{sec:proof-np-hellinger-scale}.

The theorem characterizes optimal asymptotic power when $n H^2(Q_n,
\nullbasic) \to \{0, \infty\}$ or, equivalently, when
\begin{align*}
  H^2\left(Q_n^n,\nullIid^n\right) \to 0
  ~~ \mbox{or} ~~
  H^2\left(Q_n^n,\nullIid^n\right) \to 1.
\end{align*}
In the intermediate critical case that \(
nH^2(Q_n,\nullbasic)\to\kappa\in(0,\infty) \), the bounds in
Theorem~\ref{thm:np-hellinger-scale} no longer precisely characterize the
optimal power.
We defer discussion of this intermediate case to
\Cref{sec:asymptotics-regular}, where we characterize the limiting power in
this intermediate regime under local asymptotic regularity assumptions.
There, we also provide some discussion of asymptotically full- and trivial-power
tests under regularity conditions relating to the mean
excess $\Eb_{Q_n}[X] - 1$.

\subsubsection{Statistics attaining zero-one detectability}

The likelihood ratio test~\eqref{eqn:lrtest-unimplementable} depends on the
unknown alternative \(Q_n\), so it serves only as an oracle benchmark.
We now show that \(\nplr\) and \(\pBinPlus\) attain the same Hellinger
detectability threshold without knowledge of \(Q_n\).
We first state a comparison between $\pBinPlus$ and $\nplr$ that transfers
power guarantees from $\nplr$ to $\pBinPlus$, deferring the proof to
Appendix~\ref{sec:binplus-nplr-comparison}.

\begin{proposition}[Comparison between \(\pBinPlus\) and \(\nplr\)]
  \label{proposition:binplus-nplr-comparison}
  For every \(\bm{x}\in\Rb_+^n\),
  \begin{equation*}
    \pBinPlus(\bm{x})\le\sqrt{\nplr(\bm{x})}.
  \end{equation*}
\end{proposition}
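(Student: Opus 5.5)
We compare the two statistics through the same log-concave secant argument used in the proof of \Cref{thm:min-nplr-binplus-valid}. First dispose of the trivial cases. If $x_{\max}\le 1$, then $\pBinPlus(\bm x)=1$ and $\prodPoly_{\bm x}(t)\le 1$ on $[0,1]$, so $\nplr(\bm x)=1$ and the inequality holds. Likewise, if $M:=\max_{0\le t\le 1}\prodPoly_{\bm x}(t)=1$ there is nothing to prove. So assume $x_{\max}>1$ and $M>1$. Fix $j$ with $x_j=x_{\max}=:b$, write $a:=-1/(b-1)$ and $g:=\prodPoly_{\bm x_{-j}}$, and set $L:=\polyLeftInt_j(\bm x)=\int_a^0 g$ and $R:=\polyRightInt_j(\bm x)=\int_0^1 g$. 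By \Cref{prop:binplus-ratio-monotonicity}, $\pBinPlus(\bm x)=L/(L+R)$. The goal becomes
\[
  R\ \ge\ (\sqrt M-1)\,L .
\]

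The structural facts are these. On $(a,1]$, both $g$ and $\prodPoly_{\bm x}(t)=(1+t(b-1))\,g(t)$ are products of positive affine factors, so they are log-concave, with $g(0)=1$. Let $t^\star\in(0,1]$ be a maximizer of $\prodPoly_{\bm x}$, so that $g(t^\star)=M/(1+t^\star(b-1))$. As in the proof of \Cref{thm:min-nplr-binplus-valid}, compare $\log g$ with the secant line $h(t)=ct$ through $(0,0)$ and $(s,\log g(s))$ for a point $s\in(0,1]$ to be chosen; when $g(s)>1$ we have $c>0$. Concavity gives $g\ge e^{ct}$ on $[0,s]$ and $g\le e^{ct}$ on $[a,0]$. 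Integrating,
\[
  R\ \ge\ \frac{e^{cs}-1}{c}=\frac{g(s)-1}{c},
  \qquad
  L\ \le\ \frac{1-e^{ca}}{c}=\frac{1-e^{-c/(b-1)}}{c}.
\]
This reduces the claim to the scalar inequality
\[
  g(s)-1\ \ge\ (\sqrt M-1)\bigl(1-e^{-c/(b-1)}\bigr),
  \qquad c=\frac{\log g(s)}{s}.
\]
The sanity check $n=1$ (where $g\equiv1$, $L=1/(b-1)$, $R=1$, $M=b$) shows that $\rho_j=1/b\le 1/\sqrt b$. It also shows that the left-endpoint truncation factor $1-e^{-c/(b-1)}$, or a direct use of $g\le 1$ on $[a,0]$ when $c\le 0$, is genuinely needed: the crude bound $L\le 1/c$ does not suffice.

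The main obstacle is this last scalar inequality, because $M$ involves the factor $1+t^\star(b-1)$ that is not part of $g$. I would try two choices of $s$. In one, $s=t^\star$, and we use $\sqrt M\le \max\{g(t^\star),\,1+t^\star(b-1)\}$, the larger of the two factors of $M$. If $g(t^\star)\ge\sqrt M$, we are done directly, since the truncation factor is at most one. Otherwise $1+t^\star(b-1)\ge\sqrt M$, and we must exploit the truncation. In that regime I would drop the secant and bound $L$ more crudely: $g\le 1$ is false on $[a,0]$ in general, so instead use $g(t)\le e^{c't}$ with the tangent slope $c'=(\log g)'(0)$. For $R$, bound it below by integrating the first-order condition of $t^\star$, namely $(\log g)'(t^\star)=-(b-1)/(1+t^\star(b-1))$, together with concavity, which gives $g(t)\ge g(t^\star)\exp\{-(b-1)(t-t^\star)/(1+t^\star(b-1))\}$ on $[0,t^\star]$. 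The final step is to verify the resulting elementary inequality in the variables $\bigl(b,t^\star,g(t^\star)\bigr)$. By the equality case $n=1$, this inequality should be tight exactly at the pure two-point (binomial) configurations.
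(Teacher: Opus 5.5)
There is a genuine gap. The inequality that carries the proof is never established, and the route you propose for the hard case does not work.

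\textbf{Your reduction is the right one.} Take $s=t^\star$ and put $G:=g(t^\star)$, $C:=1+t^\star(b-1)>1$ (so $M=GC$), and $u:=c/(b-1)=\log G/(C-1)$. Then $ca=-u$ and $ct^\star=(C-1)u$. For either sign of $c$, your two chord bounds give
\[
  \frac{L}{L+R}\le\frac{\int_a^0 e^{ct}\,dt}{\int_a^{t^\star}e^{ct}\,dt}
  =\frac{e^u-1}{e^{Cu}-1}.
\]
So the factor $C$ that worried you enters automatically through the exponents. The claim is then equivalent to $(e^u-1)/(e^{Cu}-1)\le (Ce^{(C-1)u})^{-1/2}$, and you verify this only when $G\ge\sqrt M$. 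Your displayed scalar inequality was also obtained by multiplying through by $c>0$. When $G\le 1$ its direction flips, and that case does occur with $t^\star>0$, e.g.\ $\bm x=(0,b)$ with $b>2$. It lies inside your unproved branch.

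\textbf{The sketch for the branch $G<\sqrt M$ fails at a concrete step.} Concavity of $\log g$ makes the tangent at $t^\star$ an \emph{upper} bound, $g(t)\le G\exp\{(b-1)(t^\star-t)/C\}$, not a lower bound. For $\bm x=(0,b)$, your claimed inequality at $t=0$ reads $1\ge e^w/(1+w)$ with $w=t^\star/(1-t^\star)>0$, which is false. On $[0,t^\star]$ the only lower bound concavity supplies is the chord you already have. The tangent slope $c'=(\log g)'(0)$ is also not a function of $(b,t^\star,G)$, so no elementary inequality in those variables results. Concavity gives only $c'\ge c$, and eliminating $c'$ returns you to the chord bound.

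\textbf{What is missing} is a short inequality that needs no case split:
\[
  \frac{e^u-1}{e^{Cu}-1}
  =e^{-(C-1)u/2}\,\frac{\sinh(u/2)}{\sinh(Cu/2)}
  \le\frac{e^{-(C-1)u/2}}{C}
  \le\frac{1}{\sqrt{C\,e^{(C-1)u}}}.
\]
The middle step uses $\sinh(Cy)\ge C\sinh y$ for $y\ge0$ and $C\ge1$, which follows from convexity of $\sinh$ on $[0,\infty)$ with $\sinh 0=0$; it extends to $y<0$ by oddness. The last step uses $C\ge1$. Since $Ce^{(C-1)u}=CG=M$, this gives $L/(L+R)\le M^{-1/2}=\sqrt{\nplr(\bm x)}$ for every real $u$, with $u=0$ handled by continuity. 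This is exactly how the paper concludes.
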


From this proposition, we immediately obtain a (uniform) asymptotic
power guarantee for $\nplr$, $\pBinPlus$, and their pointwise minimum
\(\min\{\nplr,\pBinPlus\}\).
\begin{corollary}
  \label{corollary:statistics-attain-hellinger}
  Let the assumptions of \Cref{thm:np-hellinger-scale} hold
  and $\alpha \in (0, 1)$.
  Let $\mc{Q}_n$ be collections of distributions on $\Rb_+$ satisfying
  the separation condition
  \begin{equation*}
    \inf_{Q_n \in \mc{Q}_n} n \cdot H^2(Q_n, \nullbasic) \to \infty
  \end{equation*}
  as $n \to \infty$.
  Then both $\nplr$ and $\pBinPlus$ are asymptotically powerful
  uniformly over $\mc{Q}_n$:
  \begin{equation*}
    \inf_{Q_n \in \mc{Q}_n}
    \Pb_{Q_n^n} \bigl\{ \nplr(\Xb)\le\alpha \bigr\} \to 1
    ~~~ \mbox{and} ~~~
    \inf_{Q_n \in \mc{Q}_n} \Pb_{Q_n^n}
    \bigl\{ \pBinPlus(\Xb)\le\alpha \bigr\} \to 1.
  \end{equation*}
\end{corollary}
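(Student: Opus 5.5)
Because $\pBinPlus\le\sqrt{\nplr}$ by \Cref{proposition:binplus-nplr-comparison}, we have $\{\nplr(\bm{X})\le\alpha^2\}\subseteq\{\pBinPlus(\bm{X})\le\alpha\}$. So it suffices to prove the claim for $\nplr$ at every fixed level $\beta\in(0,1)$, and then apply it with $\beta=\alpha^2$ to obtain the claim for $\pBinPlus$. The plan is therefore to dominate the rejection event of $\nplr$ by the rejection event of the oracle likelihood ratio test from \Cref{thm:np-hellinger-scale}\ref{item:asymptotic-detection}, and to check that the power bound there is uniform over $\mc{Q}_n$.

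Fix $\beta\in(0,1)$ and $Q_n\in\mc{Q}_n$, and let $t_n\in\operatorname*{argmin}_{0\le t\le1}A_{Q_n}(t)$. By the representation~\eqref{eq:def-nplr},
\begin{equation*}
  \nplr(\bm{x})^{-1}=\max_{0\le t\le1}\prodPoly_{\bm{x}}(t)\ge\prodPoly_{\bm{x}}(t_n)=\prod_{i=1}^n\{1+t_n(x_i-1)\}.
\end{equation*}
Hence the event $\{\prod_i(1+t_n(X_i-1))\ge1/\beta\}$, which is exactly the rejection event of $\lrtest_n$ with threshold $\theta_n=1/\beta$, is contained in $\{\nplr(\bm{X})\le\beta\}$. \Cref{thm:np-hellinger-scale}\ref{item:asymptotic-detection}, applied at level $\beta$, then gives
\begin{equation*}
  \Pb_{Q_n^n}\{\nplr(\bm{X})>\beta\}\le\frac{\{1-H^2(Q_n,\nullbasic)\}^{2n}}{\beta}\le\frac{1}{\beta}\exp\bigl(-2nH^2(Q_n,\nullbasic)\bigr),
\end{equation*}
where the last step uses $1-u\le e^{-u}$.

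The right-hand side depends on $Q_n$ only through $nH^2(Q_n,\nullbasic)$. Taking the supremum over $Q_n\in\mc{Q}_n$ therefore gives the bound $\beta^{-1}\exp\bigl(-2\inf_{Q_n\in\mc{Q}_n}nH^2(Q_n,\nullbasic)\bigr)$, which tends to $0$ under the separation hypothesis. This yields both displayed limits, the second one via $\beta=\alpha^2$.

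The only point needing care is that the oracle power bound in \Cref{thm:np-hellinger-scale} holds for each $Q_n$ individually with an explicit, non-asymptotic constant. I expect this to be true, since the bound is stated as a finite-$n$ inequality that likely follows from Chernoff/Markov with $\sqrt{\prod(1+t_n(X_i-1))^{-1}}$ and $\Eb_{Q_n}[(1+t_n(X-1))^{-1}]=A^\star_{Q_n}=(1-H^2)^2$. If that is so, uniformity is automatic and no further obstacle arises. One also checks that $t_n\in(0,1]$, so that $\prodPoly_{\bm{x}}(t_n)$ is admissible in the maximum over $[0,1]$; this is given by the theorem.
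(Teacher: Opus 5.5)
Your proof is correct and follows the paper's argument. You bound $\nplr$ by the oracle product $\prod_i\{1+t_n(X_i-1)\}^{-1}$, use the explicit finite-$n$ bound $\{1-H^2(Q_n,\nullbasic)\}^{2n}/\beta$ (the paper redoes this Markov step inline rather than citing part (ii) of \Cref{thm:np-hellinger-scale}), and transfer the result to $\pBinPlus$ through $\pBinPlus\le\sqrt{\nplr}$. Applying the $\nplr$ result at level $\beta=\alpha^2$ makes explicit a step the paper leaves implicit.
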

\begin{proof}
  Define the product \(K_n(t_n) := \prod_{i=1}^n \bigl\{ 1+t_n(X_i-1)
  \bigr\}\), so that \(\nplr(\bm{X}_{1:n}) \le K_n(t_n)^{-1}\) by the
  definition~\eqref{eq:def-nplr}.
  By independence and Lemma~\ref{lem:hellinger-projection},
  for any $Q_n \in \mc{Q}_n$ we have
  \begin{align*}
    \Eb_{Q_n^n}\left[ K_n(t_n)^{-1}\right]
    =
    \left[
      \Eb_{Q_n}
      \frac1{1+t_n(X-1)}
      \right]^n
    =
    A_{Q_n}(t_n)^n
    =
    \{1-H^2(Q_n,\nullbasic)\}^{2n}.
  \end{align*}
  By the standard inequality
  $1 - u \le e^{-u}$ we have
  $(1 - H^2(Q_n, \nullbasic))^n
  \le e^{-n H^2(Q_n, \nullbasic)}$, so
  \begin{align*}
    \Pb_{Q_n^n}\left\{\nplr(\Xb) > \alpha
    \right\}
    \le \Pb_{Q_n^n} \left\{K_n(t_n)^{-1} > \alpha\right\}
    \le \frac{1}{\alpha} \Eb_{Q_n^n}
    \left[K_n(t_n)^{-1}\right]
    \le \frac{1}{\alpha} e^{-n H^2(Q_n, \nullbasic)}.
  \end{align*}
  Thus $\sup_{Q_n \in \mc{Q}_n}
  \Pb_{Q_n^n}(\nplr(\Xb) > \alpha)
  \to 0$, and
  Proposition~\ref{proposition:binplus-nplr-comparison} gives the
  result for $\pBinPlus$.
\end{proof}

\subsection{Local asymptotic power under regularity}
\label{sec:asymptotics-regular}

The zero-one result leaves unresolved the critical regime
\(H(Q_n,\nullbasic) \asymp n^{-1/2}\).
Under typical finite-variance alternatives, this scaling
corresponds to familiar local $n^{-1/2}$
changes in the mean~\citep[cf.][]{VanDerVaart98}.
Leveraging classical semiparametric theory, we can develop asymptotically
optimal power under local nonparametric alternatives, which we recall below
and compare with certain parametric alternatives.
To determine whether the proposed $p$-values $\nplr$, $\pBinPlus$, and
\(\min\{\nplr,\pBinPlus\}\) attain this optimal power, we use a central limit
theorem together with careful quadratic expansions of their defining
objectives.
While in the super-critical regime $H^2(Q_n, \nullbasic) \gg 1/n$, each
attains limiting power $1$
(Corollary~\ref{corollary:statistics-attain-hellinger}), we show that
\(\pBinPlus\) and \(\min\{\nplr,\pBinPlus\}\) attain optimal local
asymptotic power, while \(\nplr\) does not.

\subsubsection{Fundamental power limits}

Let us recapitulate the local asymptotic theory on which our development
reposes~\cite[Chs.~7 and 25]{VanDerVaart98}.
Let $P_0$ be a distribution on $\Rb_+$ satisfying
\begin{align*}
  \Eb_0 X=1,
  \qquad
  \sigma^2:=\Var_0(X) \in (0, \infty).
\end{align*}
Let $\theta_0>0$ and $(Q_\theta)_{|\theta|<\theta_0}$ be a family of
distributions parameterized by $\theta \in \Rb$
with $Q_0=P_0$, all supported on $\Rb_+$.
We assume the model $(Q_\theta)$ is differentiable in quadratic mean
(DQM)~\citep[Sec.~7.2]{VanDerVaart98} at $\theta=0$, meaning there exists a
score $\score_\theta : \Rb_+ \to \Rb$ with $\Eb_0[\score_0^2] < \infty$ such
that
\begin{align}
  \label{eq:dqm-assumption}
  \int\left(
    \sqrt{dQ_\theta}
    -\sqrt{dP_0}
      \left(1+\frac{\theta}{2} \score_0 \right)
  \right)^2
  =o(\theta^2),
\end{align}
which implies
$\Eb_0[\score_0] = 0$.
We assume the Fisher
\(\finfo_\theta := \Eb_\theta[\score_\theta^2]\)
is positive at $\theta = 0$,
i.e., $\finfo_0 > 0$.

We follow standard practice~\citep[cf.][Ch.~25]{VanDerVaart98}
to assume the mean functional $\mu(Q) = \Eb_Q[X]$
is differentiable along the
submodel $(Q_\theta)$
with positive derivative
\begin{align}
  \dot{\mu}
  &:=
  \lim_{\theta\to0}
  \frac{\Eb_\theta X-\Eb_0X}{\theta}
  =
  \Eb_0[X \score_0(X)]
  > 0.
  \label{eq:mean-derivative}
\end{align}
Since \(\Eb_0X=1\), the limit~\eqref{eq:mean-derivative} is equivalent to
\begin{equation*}
  \Eb_\theta X
  =
  1+\dot\mu\,\theta+o(\theta).
\end{equation*}
The influence function representation~\eqref{eq:mean-derivative} is standard
under DQM~\citep[Exs.~25.16 and 25.24]{VanDerVaart98}.
For completeness (see Appendix~\ref{sec:derivative-of-mean}), we record the
following

\begin{example}
  \label{example:dqm-mean}
  Let the model family $(Q_\theta)_{\theta \in \Rb}$ be differentiable in
  quadratic mean at $\theta = 0$ with score $\score_0$.  and assume the
  uniform integrability condition that for some $\varepsilon > 0$,
  \begin{align*}
    \lim_{M\to\infty}
    \sup_{|\theta|<\varepsilon}
    \Eb_\theta\!\left[
      X^2\one\{X>M\}
      \right]
    = 0.
  \end{align*}
  Then $\Eb_\theta[X]$ is finite for $\theta$ near 0, $\Eb_0[|X \score_0|] <
  \infty$, and $\mu(Q) = \Eb_Q[X]$ has derivative~\eqref{eq:mean-derivative}.
\end{example}

Local asymptotic normality and semiparametric efficiency theory allows
us to derive power limits for tests under local alternatives
$Q_{h/\sqrt{n}}$ to $\nullbasic$ for $h > 0$; see \citet[Chs.~15
  and~25]{VanDerVaart98}.
To state these results, let $z_{1 - \alpha}$ be the $1 - \alpha$ quantile
of a standard Gaussian.

\begin{corollary}[Nonparametric power]
  \label{corollary:nonparametric-envelope}
  Let $\alpha \in (0, 1)$ and
  $\varphi_n:\Rb_+^n\to[0,1]$ be a sequence of asymptotic
  level $\alpha$ tests, i.e.,
  \(\limsup_n \sup_{P\in\nullIid^n}\Eb_P[\varphi_n] \le \alpha\).
  Let the model family $(Q_\theta)$ be DQM~\eqref{eq:dqm-assumption}
  at $\theta = 0$
  with derivative $\dot{\mu}$ as in~\eqref{eq:mean-derivative} and
  $h > 0$.
  Then
  \begin{align*}
    \limsup_{n\to\infty}
    \Eb_{Q_{h/\sqrt{n}}^n}\left[\varphi_n\right]
    \le
    1-\Phi\left(
    z_{1-\alpha}-\frac{h\dot\mu}{\sigma}
    \right).
  \end{align*}
\end{corollary}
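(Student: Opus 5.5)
The plan is to reduce to a simple-versus-simple problem, as in the classical argument \citep[Chs.~15 and~25]{VanDerVaart98}. We construct a sequence of \emph{null} laws $P_n\in\nullbasic$ such that the experiments $(P_n^n, Q_{h/\sqrt n}^n)$ converge to a Gaussian shift with separation $\tau := h\dot\mu/\sigma$. The Neyman--Pearson lemma, combined with Le Cam's third lemma, then bounds the power of any asymptotically level-$\alpha$ test by $1-\Phi(z_{1-\alpha}-\tau)$. Since $\sup_{P\in\nullIid^n}\Eb_P[\varphi_n]\ge \Eb_{P_n^n}[\varphi_n]$, the level constraint transfers to the sequence $P_n^n$.

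\textbf{Step 1: the least favorable null direction.}
Write $\dot\ell := \score_0$ and $\psi(x) := x-1$, which lies in $L^2(P_0)$ because $\sigma^2<\infty$ and satisfies $\Eb_0\psi=0$. Decompose
\[
\dot\ell = \frac{\dot\mu}{\sigma^2}\psi + g_0, \qquad g_0 := \dot\ell - \frac{\dot\mu}{\sigma^2}\psi .
\]
Using $\Eb_0[\psi\dot\ell]=\dot\mu$ from \eqref{eq:mean-derivative}, the remainder satisfies $\Eb_0[g_0\psi]=0$. A null path with score $h g_0$ then leaves the mean unchanged to first order. The difference of scores between the alternative and this path is $h(\dot\mu/\sigma^2)\psi$, whose variance is $h^2\dot\mu^2/\sigma^2=\tau^2$.

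\textbf{Step 2: an exact null path (main obstacle).}
Because $X$ is unbounded and $g_0$ need not be bounded, the naive density $p_0(1+hg_0/\sqrt n)$ is neither a valid probability density nor guaranteed to satisfy $\Eb X\le1$ exactly. To handle this, fix $\varepsilon>0$ and choose a bounded $b$ with $\Eb_0 b=0$, $\Eb_0[b\psi]=0$, and $\|b-g_0\|_{L^2(P_0)}\le\varepsilon$; bounded functions are dense, and both linear constraints can be restored by adding small multiples of fixed bounded functions. Also fix a bounded $w$ with $\Eb_0 w=0$ and $\Eb_0[w\psi]>0$, for instance a centered truncation of $\psi$. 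Define
\[
dP_n = \bigl(1 + \tfrac{h}{\sqrt n}\, b - c_n w\bigr)\,dP_0 ,
\]
which is a probability density for large $n$. Its mean is $1 - c_n\Eb_0[w\psi]$, so it lies in $\nullbasic$ for every $c_n\ge0$. We take $c_n = 1/n$, which gives exact nullness with a perturbation of negligible order $o(n^{-1/2})$. This path is DQM with score $hb$; the $w$-term contributes nothing to first order.

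\textbf{Step 3: LAN and the envelope.}
By DQM of $(Q_\theta)$ and of the path $P_n$, both $\log dQ^n_{h/\sqrt n}/dP_0^n$ and $\log dP_n^n/dP_0^n$ admit standard LAN expansions \citep[Thm.~7.2]{VanDerVaart98}. Subtracting them gives, under $P_n^n$,
\[
\log\frac{dQ_{h/\sqrt n}^n}{dP_n^n} = \frac{h}{\sqrt n}\sum_{i=1}^n(\dot\ell-b)(X_i) - \tfrac12 h^2\Eb_0(\dot\ell-b)^2 + o_p(1),
\]
which converges in law to $N(-\tfrac12\tau_\varepsilon^2,\tau_\varepsilon^2)$. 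Here $\tau_\varepsilon^2 = h^2\Eb_0(\dot\ell-b)^2$, and $\tau_\varepsilon\to\tau$ as $\varepsilon\to0$, because $\Eb_0(\dot\ell-g_0)^2=\dot\mu^2/\sigma^2$ and $\|b-g_0\|\le\varepsilon$. By contiguity and Le Cam's third lemma, the limit experiment is the Gaussian shift $N(0,1)$ versus $N(\tau_\varepsilon,1)$. Any test with $\limsup_n \Eb_{P_n^n}[\varphi_n]\le\alpha$ therefore has $\limsup_n \Eb_{Q^n_{h/\sqrt n}}[\varphi_n]\le 1-\Phi(z_{1-\alpha}-\tau_\varepsilon)$; this follows along subsequences from the asymptotic Neyman--Pearson bound \citep[Thm.~15.1]{VanDerVaart98}. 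Letting $\varepsilon\downarrow0$ yields the claimed bound $1-\Phi(z_{1-\alpha}-h\dot\mu/\sigma)$.

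I expect Step 2 to be the main difficulty. Exact membership in $\nullbasic$ with unbounded support and an unbounded score is why we pass through bounded approximations and the slack term $c_n w$. A secondary check is that the cross term in the LAN expansion behaves correctly when $b$ replaces $g_0$; this is routine because $b$ is bounded.
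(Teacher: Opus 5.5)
Your argument is correct, but it takes a different route from the paper. The paper simply invokes van der Vaart's Theorem~25.44. It identifies the nonparametric tangent space $\{g\in L^2(P_0):\Eb_0 g=0\}$ and the pathwise derivative $g\mapsto\Eb_0[(X-1)g]$ of the mean. From these it obtains the efficient influence function $x-1$, with $L^2(P_0)$ norm $\sigma$ and inner product $\dot\mu$ with the score, and reads off the bound.

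You instead carry out the proof of that theorem by hand in this model. You make the least favorable null direction $g_0=\dot\ell-(\dot\mu/\sigma^2)(x-1)$ explicit and replace it by a bounded $b$, so that the null laws are exact members of $\nullbasic$. You then reduce to a binary experiment converging to $N(0,1)$ versus $N(\tau_\varepsilon,1)$, apply the asymptotic Neyman--Pearson bound, and let $\varepsilon\downarrow0$.

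The citation buys brevity and places the result in the general efficiency framework. However, it leaves to the cited theorem the checks that matter here: differentiability of the mean relative to the tangent set when $X$ is unbounded, and that the null submodels used really lie in $\nullbasic$. Your version is self-contained and exhibits the least favorable nulls concretely. It also uses only the identity $\dot\mu=\Eb_0[X\dot\ell]$, not the limit in \eqref{eq:mean-derivative}, so it holds under slightly weaker hypotheses.

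Two small corrections. First, Step~2 is easier than you anticipate. Once $\Eb_0 b=0$ and $\Eb_0[b\psi]=0$, the law $(1+\tfrac{h}{\sqrt n}b)\,dP_0$ already has mean exactly $1$, so the slack term $c_nw$ is unnecessary.

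Second, the display in Step~3 is off by a constant. Subtracting the two LAN expansions (valid under $P_0^n$, hence under $P_n^n$ by contiguity) gives the constant $-\tfrac12h^2(\Eb_0\dot\ell^2-\Eb_0 b^2)$. This differs from your $-\tfrac12h^2\Eb_0(\dot\ell-b)^2$ by $h^2\Eb_0[(\dot\ell-b)b]$, which is generally nonzero. That difference is exactly the drift Le Cam's third lemma gives $\tfrac{h}{\sqrt n}\sum_i(\dot\ell-b)(X_i)$ under $P_n^n$. So your stated limit $N(-\tfrac12\tau_\varepsilon^2,\tau_\varepsilon^2)$ is correct, even though the displayed expansion is not.
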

\noindent
Appendix~\ref{sec:proof-nonparametric-envelope} includes a proof for
completeness.

Corollary~\ref{corollary:nonparametric-envelope}
shows that the optimal asymptotic power of any test in the
local alternatives $Q_{h/\sqrt{n}}$ coincides with that in a
Gaussian mean-shift experiment, and
the sample mean provides an asymptotically optimal test.
Indeed, letting $\wb{X}_n = \frac{1}{n} \sum_{i = 1}^n X_i$,
Le Cam's third lemma gives
\begin{equation*}
  \sqrt{n} (\wb{X}_n-1)
  \convD_{Q_{h/\sqrt n}}
  \normal\left(h\dot\mu, \sigma^2 \right)
\end{equation*}
along alternatives $Q_{h/\sqrt{n}}$.
Given a consistent estimate $\what{\sigma}^2$ of $\sigma^2$,
the asymptotic level-$\alpha$ test
\({\sqrt n(\wb{X}_n - 1)} / \what{\sigma} > z_{1-\alpha}\)
has limiting power
\(1-\Phi(z_{1-\alpha}-\frac{h\dot\mu}{\sigma})\).
While it lacks finite-sample validity, the test provides a comparison in
the same spirit as those underlying \citet{RomanoWo00}.
The nonparametric class $\nullbasic$ leads to some loss
of power over parametric models, which we record here to provide
intuition:
\begin{corollary}[Parametric oracle]
  \label{corollary:parametric-oracle}
  Let the conditions of Corollary~\ref{corollary:nonparametric-envelope} hold,
  $P_0 \in \nullbasic$, and
  $\finfo_0 = \Eb_0[\score_0^2]$ be the Fisher information.
  The level-$\alpha$ likelihood ratio test $\lrtest_n$ for
  testing $P_0^n$ against $Q_{h/\sqrt n}^n$ has asymptotic
  power
  \begin{align*}
    \lim_n \Eb_{Q_{h/\sqrt{n}}^n}\left[\lrtest_n\right]
    =1-\Phi\left(z_{1-\alpha}-h\sqrt{\finfo_0}\right).
  \end{align*}
\end{corollary}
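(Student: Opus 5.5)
This is the classical Neyman--Pearson/LAN calculation for the simple hypothesis $P_0^n$ versus $Q_{h/\sqrt n}^n$. We take $\lrtest_n$ to be the most powerful level-$\alpha$ test, $\lrtest_n=\one\{\Lambda_n>c_n\}$ with randomization on $\{\Lambda_n=c_n\}$ chosen so that $\Eb_{P_0^n}[\lrtest_n]=\alpha$. Here
\begin{equation*}
  \Lambda_n:=\log\frac{dQ^n_{h/\sqrt n}}{dP_0^n}(\Xb).
\end{equation*}
The plan is to identify the limit law of $\Lambda_n$ under both hypotheses and read off the power.

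First we invoke local asymptotic normality, which follows from the DQM condition~\eqref{eq:dqm-assumption} \citep[Thm.~7.2]{VanDerVaart98}. Under $P_0^n$ it gives
\begin{equation*}
  \Lambda_n=\frac{h}{\sqrt n}\sum_{i=1}^n\score_0(X_i)-\frac{h^2}{2}\finfo_0+o_{P_0}(1),
\end{equation*}
so that $\Lambda_n\convD\normal(-\tfrac12h^2\finfo_0,\,h^2\finfo_0)$ under the null, using $\Eb_0\score_0=0$ and the central limit theorem. Contiguity and Le Cam's third lemma \citep[Ex.~6.7]{VanDerVaart98} then give $\Lambda_n\convD\normal(+\tfrac12h^2\finfo_0,\,h^2\finfo_0)$ under $Q^n_{h/\sqrt n}$.

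Next we pin down the critical value. Because $\finfo_0>0$, the limit under the null is continuous. The constraint $\Eb_{P_0^n}[\lrtest_n]=\alpha$ therefore forces $c_n\to c:=-\tfrac12h^2\finfo_0+z_{1-\alpha}h\sqrt{\finfo_0}$, and the randomization mass vanishes in the limit. The argument is a standard squeeze: if $\limsup c_n>c$ or $\liminf c_n<c$, then along a subsequence the rejection probability would converge to something other than $\alpha$, by the portmanteau theorem and continuity of the limiting distribution function.

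Finally we compute the power. By Slutsky and the alternative limit,
\begin{equation*}
  \Eb_{Q^n_{h/\sqrt n}}[\lrtest_n]\to\Pb\{\normal(\tfrac12h^2\finfo_0,\,h^2\finfo_0)>c\}=1-\Phi\bigl(z_{1-\alpha}-h\sqrt{\finfo_0}\bigr).
\end{equation*}
The only delicate point is the convergence $c_n\to c$ combined with possible atoms of $\Lambda_n$ at $c_n$. This is handled by sandwiching the test between $\one\{\Lambda_n>c_n\}$ and $\one\{\Lambda_n\ge c_n\}$ and using continuity of the Gaussian limit under both hypotheses, since contiguity transfers the null statements to the alternative.
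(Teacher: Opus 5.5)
Your proposal is correct and follows essentially the same route as the paper. The paper simply cites convergence of the local experiments to the Gaussian shift $\normal(0,1)$ versus $\normal(h\sqrt{\finfo_0},1)$ and reads off the limiting Neyman--Pearson power. You carry out that step explicitly via LAN, Le Cam's third lemma, and the critical-value squeeze.
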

\begin{proof}
  The experiments under \(P_0^n\) and \(Q_{h/\sqrt n}^n\) converge to the
  Gaussian shift experiment \(Z \sim \normal(0,1)\)
  versus \(Z \sim \normal(h\sqrt{\finfo_0},1)\); see, for example,
  \cite[Chs.~6--7 and~15]{VanDerVaart98}.
  The Neyman-Pearson (likelihood ratio) test has limiting power equal to
  that of the level-\(\alpha\) test rejecting when
  $Z > z_{1 - \alpha}$,
  which has power
  $\Pb(\normal(h\sqrt{\finfo_0} ,1)>z_{1-\alpha})
  =1-\Phi(z_{1-\alpha}-h\sqrt{\finfo_0})$.
\end{proof}

The largest possible
asymptotic relative efficiency (ARE) of a test valid over the
nonparametric mean null $\nullbasic$ relative to the unimplementable
parametric oracle is the squared correlation
\begin{align*}
  \frac{(\dot\mu/\sigma)^2}{\finfo_0}
  =
  \frac{\dot\mu^2}{\sigma^2 \finfo_0}
  =
  \frac{\Eb_0[X \score_0]^2}{\sigma^2 \finfo_0}
  =
  \operatorname{Corr}_0(X, \score_0)^2.
\end{align*}
For example, in the lognormal model $X_\theta=\exp(Z_\theta)$, with
$Z_\theta\sim \normal(-\frac{1}{2} + \theta,1)$ we have $\sigma^2=e-1$,
$\score_0(x)=\log x + 1/2$, $\finfo_0=1$, and $\dot{\mu}=1$,
yielding efficiency
$(e-1)^{-1}\approx 58\%$.

\subsubsection{Local behavior of $\nplr$}

With these fundamental power limits recapitulated, we turn to deriving
the local asymptotic behavior of the tests $\nplr$ and $\pBinPlus$,
beginning with $\nplr$.
Let $X_{n,i} \simiid Q_{h/\sqrt{n}}$ satisfy the DQM
conditions~\eqref{eq:dqm-assumption} and~\eqref{eq:mean-derivative}.
Define the centered variables and empirical variance
\begin{align*}
  \xi_{n,i}:=X_{n,i}-1,
  \qquad
  S_n:=\frac1{\sqrt n}\sum_{i=1}^n\xi_{n,i},
  \qquad
  V_n:=\frac1n\sum_{i=1}^n\xi_{n,i}^2 .
\end{align*}
Then quadratic mean differentiability implies~\citep[Theorem 7.2 and Lemma
  6.4]{VanDerVaart98} the convergence under the local alternatives
$Q_{h/\sqrt{n}}$ that
\begin{align}
  \label{eq:local-quadratic-regularity}
  S_n \convD \normal(h\dot\mu,\sigma^2),
  \qquad
  V_n \convP \sigma^2,
  \qquad
  \frac{\max_{i\le n}|\xi_{n,i}|}{\sqrt n}\convP 0.
\end{align}

\begin{theorem}[Limiting power of the nonparametric likelihood ratio]
  \label{thm:nplr-local-limit}
  Assume the limit~\eqref{eq:local-quadratic-regularity} holds
  and let $\Xb_n = (X_{n,1}, \ldots, X_{n,n})$.
  Then under the alternatives $Q_{h/\sqrt{n}}$,
  \begin{align*}
    -\log \nplr(\Xb_n)
    = \frac{(S_n)_+^2}{2V_n} + o_p(1).
  \end{align*}
\end{theorem}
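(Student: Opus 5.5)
We write $-\log\nplr(\Xb_n)=\max_{0\le t\le 1}\ell_n(t)$, where
\[
  \ell_n(t):=\sum_{i=1}^n\log(1+t\xi_{n,i}).
\]
The plan is a localized quadratic expansion of the concave function $\ell_n$ on $[0,1]$. Since $\xi_{n,i}\ge -1$, each summand is concave in $t$ on $[0,1)$. We have $\ell_n(0)=0$ and $\ell_n'(0)=\sqrt n\,S_n$. The heuristic is
\[
  \ell_n(t)\approx t\sqrt n S_n-\tfrac{t^2}{2}nV_n .
\]
Maximizing over $t\ge0$ gives $t^\star=(S_n)_+/(\sqrt n V_n)=O_p(n^{-1/2})$ and value $(S_n)_+^2/(2V_n)$. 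The proof makes this rigorous in three steps.

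\emph{Step 1 (uniform quadratic approximation on a shrinking window).} Fix $M<\infty$ and set $t=u/\sqrt n$ with $u\in[0,M]$. We use $\log(1+y)=y-y^2/2+r(y)$ with $|r(y)|\le|y|^3$ for $|y|\le1/2$. By the third part of~\eqref{eq:local-quadratic-regularity}, with probability tending to one $\max_i|u\xi_{n,i}/\sqrt n|\le M\max_i|\xi_{n,i}|/\sqrt n\le 1/2$. On that event,
\[
  \sup_{u\le M}\Bigl|\ell_n(u/\sqrt n)-uS_n+\tfrac{u^2}{2}V_n\Bigr|
  \le M^3 n^{-3/2}\sum_i|\xi_{n,i}|^3
  \le M^3\,\frac{\max_i|\xi_{n,i}|}{\sqrt n}\,V_n=o_p(1).
\]

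\emph{Step 2 (the maximizer lies in the window).} This is the main point, because we must rule out a maximizer at $t$ of order one, where the expansion fails. Concavity handles it. Put $g_n(u):=\ell_n(u/\sqrt n)$, which is concave on $[0,\sqrt n]$. Fix $\varepsilon>0$. Since $S_n=O_p(1)$ and $V_n\convP\sigma^2>0$, we can choose $M$ so that, with probability at least $1-\varepsilon$ for large $n$, the quadratic $q_n(u):=uS_n-u^2V_n/2$ satisfies $q_n(M)<q_n(M/2)-1$ and its maximizer over $u\ge0$ lies in $[0,M/4]$.

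By Step 1, $g_n(M)<g_n(M/2)$ with probability tending to $1-\varepsilon$ or more. Concavity then implies $g_n$ is nonincreasing on $[M,\sqrt n]$. Hence, on that event,
\[
  \sup_{0\le t\le1}\ell_n(t)=\sup_{0\le u\le M}g_n(u).
\]
Since $\varepsilon$ is arbitrary, this reduction holds with probability tending to one along a suitable $M$. The concavity near the boundary $t=1$ needs a short remark: if some $X_{n,i}=0$, then $\ell_n(1)=-\infty$, which is harmless.

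\emph{Step 3 (conclude).} From Steps 1 and 2, $-\log\nplr(\Xb_n)=\sup_{0\le u\le M}q_n(u)+o_p(1)$. The maximum of $q_n$ over $u\ge0$ equals $(S_n)_+^2/(2V_n)$ and is attained at $(S_n)_+/V_n\le M$ on the high-probability event. Letting $\varepsilon\downarrow0$ gives the claim.

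I expect Step 2, the localization via concavity, to be the only delicate part. Its key input is the tightness of $S_n$ together with $V_n$ being bounded away from zero; Step 1 is routine given the Lindeberg-type maximal condition in~\eqref{eq:local-quadratic-regularity}.
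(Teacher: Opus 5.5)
Your proposal is correct and takes essentially the same route as the paper: a uniform third-order Taylor bound on the window $t=u/\sqrt{n}$, $u\in[0,M]$, then a concavity argument that confines the maximizer to that window. The paper packages the localization and the $\varepsilon$--$M$ bookkeeping as Lemma~\ref{lem:concave-local-maximization}, comparing $f_n(Ar_n)$ with $f_n(0)$ where you compare $g_n(M)$ with $g_n(M/2)$; the two comparisons are equivalent.
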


As an immediate consequence, letting \(Z_h\sim \normal(h\dot\mu/\sigma,1)\)
and $h > 0$,
\begin{equation}
  \lim_{n\to\infty}\Pb_{Q_{h/\sqrt n}^n}\{\nplr\le\alpha\}
  =
  \Pb\left\{
  \exp\left(-\frac{(Z_h)_+^2}{2}\right)\le\alpha
  \right\}
  =
  1-\Phi\left(
  \sqrt{2\log(1/\alpha)}-\frac{h\dot\mu}{\sigma}
  \right)
  \label{eq:raw-nplr-power}
\end{equation}
for each \(\alpha\in(0,1)\).
The statistic is therefore strictly more conservative than an asymptotically
exact one-sided Gaussian \(p\)-value.
Indeed, for every $\alpha\in(0,1)$,
\begin{align*}
  1-\Phi\left(\sqrt{2\log(1/\alpha)}\right)<\alpha,
\end{align*}
because the normal tail is strictly smaller than $\exp(-x^2/2)$ for all $x >
0$.
Even more, as $\alpha \downarrow 0$ we have
\begin{align*}
  \frac{1 - \Phi(z_{1 - \alpha} - h \dot{\mu} / \sigma)}{
    1 - \Phi(\sqrt{2 \log(1/\alpha)} - h \dot{\mu} / \sigma)}
  = (1 + o(1)) \cdot 2 \sqrt{\pi \log \frac{1}{\alpha}}.
\end{align*}
Indeed, the second order Gaussian quantile expansion gives
\begin{align*}
  z_{1-\alpha}
  = \sqrt{2\log(1/\alpha)}
  - \frac{\log(4\pi)+\log\log(1/\alpha)}
  {2\sqrt{2\log(1/\alpha)}}
  + o\left(\frac{1}{\sqrt{\log(1/\alpha)}}\right).
\end{align*}
Hence by Mills' ratio,
\begin{align*}
  \log \frac{ 1 - \Phi(z_{1 - \alpha} - h \dot{\mu} / \sigma)}{
    1 - \Phi(\sqrt{2 \log(1/\alpha)} - h \dot{\mu} / \sigma)}
  =
  \frac{2\log(1/\alpha)-z_{1-\alpha}^2}{2}+o(1)
  = \frac{1}{2}\log\left(4\pi\log(1/\alpha)\right)+o(1).
\end{align*}
Exponentiating proves the claim.
Thus \( \nplr \) fails to attain the optimal limiting power in
Corollary~\ref{corollary:nonparametric-envelope}.

We return to prove \Cref{thm:nplr-local-limit}.
We use a small technical lemma to pass from a local quadratic approximation
of the log-likelihood process to its global maximum.

\begin{lemma}
  \label{lem:concave-local-maximization}
  Let \(r_n>0\) satisfy \(r_n\to0\) and
  \(f_n:[0,1]\to \Rb \cup \{-\infty\}\) be a random closed
  concave function.
  Let \((S_n,V_n)\) be random variables such that
    \(S_n=O_p(1)\) and \(V_n\convP v>0\).
    For \(u\ge0\), define
    \begin{equation*}
      g_n(u):=uS_n-\frac{u^2}{2}V_n.
    \end{equation*}
    Assume that $\sup_{0\le u\le A}
    |f_n(r_nu)-g_n(u)| \convP 0$ for each $A<\infty$ as $n
    \to \infty$.
    Then
    \begin{align*}
      \sup_{0\le t\le1}f_n(t)
      =\sup_{u\ge0}g_n(u)+o_p(1)
      =\frac{(S_n)_+^2}{2V_n}+o_p(1).
    \end{align*}
\end{lemma}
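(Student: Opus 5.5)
The plan has two steps. First, identify $\sup_{u\ge0}g_n(u)$. Second, show that the supremum of $f_n$ over all of $[0,1]$ is captured by the local window $t=r_nu$ with $u\in[0,A]$, for a suitably large $A$. Concavity carries the second step.

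For the first step, the function $g_n$ is a concave quadratic in $u$. On the event $V_n>0$, whose probability tends to one, its maximizer over $u\ge0$ is $u_n^\star=(S_n)_+/V_n$, and the maximum value is $(S_n)_+^2/(2V_n)$. Since $S_n=O_p(1)$ and $V_n\convP v>0$, we have $u_n^\star=O_p(1)$. So for any $\varepsilon>0$ there is an $A_0$ with $\Pb\{u_n^\star\le A_0\}\ge1-\varepsilon$ for all large $n$.

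For the lower bound, take $A=A_0$. On the event above, $r_nu_n^\star\in[0,1]$ once $r_nA_0\le1$. Hence
\[
\sup_{0\le t\le1}f_n(t)\ge f_n(r_nu_n^\star)\ge g_n(u_n^\star)-\sup_{0\le u\le A_0}|f_n(r_nu)-g_n(u)|,
\]
and the last term is $o_p(1)$ by assumption.

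The upper bound is the main obstacle, because the uniform approximation only holds on compacts in $u$, while $t$ ranges over all of $[0,1]$. I would use concavity of $u\mapsto f_n(r_nu)$ to control the region $u>A$ from its values at two points of $[0,A]$. Take $A=2A_0+1$, or larger as needed. On the good event we have $V_n\ge v/2$, $|S_n|\le M$, and $u_n^\star\le A_0$. In particular $g_n(A)<g_n(A/2)-c$ for some $c>0$, since $g_n$ is strictly decreasing beyond $u_n^\star$ with curvature bounded below. Call this event $E_n$; intersected with the event that the uniform error on $[0,A]$ is below $c/3$, it has probability at least $1-2\varepsilon$ for large $n$.

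On this intersection, $f_n(r_nA)<f_n(r_nA/2)$. By concavity, $f_n$ is then nonincreasing on $[r_nA,1]$: if some $t>r_nA$ had $f_n(t)>f_n(r_nA)$, the chord from $r_nA/2$ to $t$ would lie above $f_n(r_nA)$ at $r_nA$, a contradiction. Therefore
\[
\sup_{[0,1]}f_n=\sup_{[0,r_nA]}f_n\le\sup_{[0,A]}g_n+c/3\le\sup_{u\ge0}g_n+o_p(1)
\]
after letting the tolerance shrink. This handles a possible $-\infty$ value, since closed concave functions with the finite values at $0$ implied by the approximation behave as needed. It also handles the case $r_nA>1$, which only occurs for finitely many $n$.

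Combining the two bounds and letting $\varepsilon\downarrow0$ gives $\sup_{0\le t\le1}f_n(t)=\sup_{u\ge0}g_n(u)+o_p(1)=(S_n)_+^2/(2V_n)+o_p(1)$.
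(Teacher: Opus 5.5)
Your proof is correct and follows essentially the paper's argument: on a high-probability event where $V_n$ is bounded below, $S_n$ is bounded, and the uniform error on $[0,A]$ is small, concavity forces the supremum of $f_n$ over $[0,1]$ into the window $[0,r_nA]$, and there the quadratic approximation applies. The only cosmetic differences are two. The paper compares $f_n(Ar_n)$ with $f_n(0)$ rather than with $f_n(r_nA/2)$. It also gets both directions at once from $\bigl|\sup_{[0,1]}f_n-\sup_{[0,A]}g_n\bigr|\le\varepsilon$, rather than proving the lower bound separately as you do.
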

\begin{proof}
  Let $\varepsilon,\delta>0$.
  Since $S_n=O_p(1)$, we may choose $A < \infty$ large enough that
  $A^2 v > 16 \varepsilon$ and
  \begin{equation*}
    \limsup_{n\to\infty}\Pb\left(|S_n|>\frac{Av}{8}\right)<\delta.
  \end{equation*}
  Define the event
  \begin{align*}
    \mc{E}_n \defeq \left\{V_n \ge \frac{v}{2},
    ~~ |S_n| \le \frac{Av}{8},
    ~~
    \sup_{0 \le u \le A} |f_n(r_n u) - g_n(u)|
    \le \varepsilon \right\},
  \end{align*}
  Since $V_n\convP v$ and the local approximation holds for this fixed $A$,
  \begin{equation*}
    \liminf_{n\to\infty}\Pb(\mc{E}_n)\ge 1 - \delta.
  \end{equation*}
  On $\mc{E}_n$, we observe that
  \begin{align*}
    f_n(A r_n) - f_n(0)
    \le g_n(A) - g_n(0)
    + 2 \varepsilon
    \le A S_n - \frac{A^2 V_n}{2} + 2 \varepsilon
    \le -\frac{A^2 v}{8} + 2 \varepsilon < 0,
  \end{align*}
  so by concavity any $t_n^\star \in \argmax_{0\le t\le1} f_n(t)$ lies in
  $[0, A r_n]$.
  Similarly, on $\mc{E}_n$, \((S_n)_+/V_n\) maximizes
  \(g_n\) over $\Rb_+$ and lies in \([0,A]\).
  For $n$ large enough that $A r_n \le 1$, we thus obtain
  \begin{equation*}
    \left|\sup_{0\le t\le1}f_n(t)
    -
    \sup_{0\le u\le A} g_n(u)\right|
    \le \varepsilon
    ~~ \mbox{and} ~~
    \sup_{0 \le u \le A}g_n(u)
    = \frac{(S_n)_+^2}{2 V_n}
  \end{equation*}
  on $\mc{E}_n$.
  Consequently,
  \begin{equation*}
    \limsup_{n\to\infty}
    \Pb\left(
      \left|\sup_{0\le t\le1}f_n(t)
      -\frac{(S_n)_+^2}{2V_n}\right|>\varepsilon
    \right)
    \le\delta.
  \end{equation*}
  Since $\delta>0$ is arbitrary, this proves the result.
\end{proof}

\begin{proof}[Proof of \Cref{thm:nplr-local-limit}]
  Let
  \begin{equation*}
    f_n(t):=\sum_{i=1}^n\log(1+t\xi_{n,i}),
    \qquad
    g_n(u):=uS_n-\frac{u^2}{2}V_n,
    \qquad
    r_n:=n^{-1/2}.
  \end{equation*}
  Then $-\log\nplr=\sup_{0\le t\le1}f_n(t)$.  We verify the hypotheses of
  \Cref{lem:concave-local-maximization}.  For fixed $A<\infty$, Taylor's theorem
  on the event $A\max_i|\xi_{n,i}|/\sqrt n\le1/2$ gives, uniformly for
  $0\le u\le A$,
  \begin{equation*}
    f_n(r_nu)
    =g_n(u)
    +O_A\left(
    \frac{\max_i|\xi_{n,i}|}{\sqrt n}V_n
    \right)
    =g_n(u)+o_p(1),
  \end{equation*}
  where the second equality follows by
  assumption~\eqref{eq:local-quadratic-regularity}.
  \Cref{lem:concave-local-maximization} therefore gives
  \begin{equation*}
    -\log\nplr(\Xb_n)
    =
    \sup_{u\ge0}g_n(u)+o_p(1)
    =
    \frac{(S_n)_+^2}{2V_n}+o_p(1).
  \end{equation*}
  Because $V_n \cp \sigma^2$ and
  $S_n \cd \normal(h \dot{\mu}, \sigma^2)$
  under $Q_{h/\sqrt{n}}$
  by assumption~\eqref{eq:local-quadratic-regularity},
  the continuous mapping theorem and Slutsky's lemmas give
  \begin{equation*}
    \nplr(\Xb_n)
    \convD
    \exp\left(-\frac{(Z_h)_+^2}{2}\right).
    \qedhere
  \end{equation*}
\end{proof}

\subsubsection{Sharpness for $\pBinPlus$ and $\min\{\nplr,\pBinPlus\}$}

We can analyze the asymptotic behavior of the
generalized binomial test $\pBinPlus$ from Section~\ref{sec:binplus},
Proposition~\ref{prop:binplus-ratio-monotonicity},
using similar techniques.
In contrast to the limit in Theorem~\ref{thm:nplr-local-limit} for the
nonparametric likelihood ratio, the test does achieve the optimal asymptotic
power that Corollary~\ref{corollary:nonparametric-envelope} specifies.

\begin{theorem}[Limiting power of the generalized binomial statistic]
  \label{thm:Y-local-limit}
  Assume the limit~\eqref{eq:local-quadratic-regularity} holds
  and let $\Xb_n = (X_{n,1}, \ldots, X_{n,n})$.
  Then under the alternatives $Q_{h/\sqrt{n}}$,
  \begin{align*}
    \pBinPlus(\Xb_n)
    =
    \Phi\left(-\frac{S_n}{\sqrt{V_n}}\right) + o_p(1).
  \end{align*}
\end{theorem}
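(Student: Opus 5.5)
We would work directly with the representation in \Cref{prop:binplus-ratio-monotonicity}. When \(x_{\max}>1\), fix an index \(j\) with \(X_{n,j}=\max_i X_{n,i}\). Then \(\pBinPlus(\Xb_n)=\rho_j(\Xb_n)=\polyLeftInt_j/(\polyLeftInt_j+\polyRightInt_j)\), where both integrals involve \(\prodPoly_{\Xb_{-j}}(t)=\exp(f_n(t))\) with \(f_n(t):=\sum_{i\ne j}\log(1+t\xi_{n,i})\). The idea is a Laplace-type approximation at scale \(t=u/\sqrt n\). Under this change of variables,
\begin{equation*}
  \polyLeftInt_j=\frac{1}{\sqrt n}\int_{-\sqrt n/(X_{(n)}-1)}^0 e^{f_n(u/\sqrt n)}\,du,
  \qquad
  \polyRightInt_j=\frac{1}{\sqrt n}\int_0^{\sqrt n} e^{f_n(u/\sqrt n)}\,du .
\end{equation*}
The common factor \(n^{-1/2}\) cancels in \(\rho_j\). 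We want to show that both integrals are, up to \(o_p(1)\) relative error, those of \(e^{g_n(u)}\) with \(g_n(u)=uS_n-u^2V_n/2\). Completing the square, \(g_n(u)=-\tfrac{V_n}{2}(u-S_n/V_n)^2+S_n^2/(2V_n)\). Hence
\begin{equation*}
  \frac{\int_{-\infty}^0 e^{g_n}}{\int_{-\infty}^\infty e^{g_n}}=\Pb\{\normal(S_n/V_n,1/V_n)\le 0\}=\Phi\bigl(-S_n/\sqrt{V_n}\bigr),
\end{equation*}
which is the claimed limit.

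The steps are as follows.

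First, we dispose of degenerate events. Since \(V_n\convP\sigma^2>0\) and \(S_n=O_p(1)\), some \(\xi_{n,i}>0\) with probability tending to one, so \(x_{\max}>1\) and \(\rho_j\) is the relevant formula. By~\eqref{eq:local-quadratic-regularity}, \((X_{(n)}-1)/\sqrt n\convP0\). Therefore the lower limit satisfies \(-\sqrt n/(X_{(n)}-1)\to-\infty\) in probability, and the upper limit \(\sqrt n\to\infty\).

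Second, we establish the local approximation \(\sup_{|u|\le A}|f_n(u/\sqrt n)-g_n(u)|\convP0\) for each fixed \(A\). This uses the same Taylor argument as in the proof of \Cref{thm:nplr-local-limit}, now two-sided in \(u\). Dropping the single index \(j\) perturbs \(S_n\) by \(|\xi_{n,j}|/\sqrt n=o_p(1)\) and \(V_n\) by \(\xi_{n,j}^2/n=o_p(1)\). On \([-A,A]\), uniform convergence of the exponent gives
\begin{equation*}
  \int_{-A}^0 e^{f_n(u/\sqrt n)}\,du=\int_{-A}^0 e^{g_n}\,du+o_p(1),
\end{equation*}
and similarly on \([0,A]\). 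Both of these \(g_n\)-integrals are bounded away from zero in probability.

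Third, and this is the main obstacle, we control the tails \(|u|>A\) uniformly, up to the random endpoints. We would use concavity of \(u\mapsto f_n(u/\sqrt n)\) on the domain where all factors are positive, together with \(f_n(0)=0\). For \(u\ge A\), concavity gives \(f_n(u/\sqrt n)\le (u/A)f_n(A/\sqrt n)\), and the same holds for \(u\le -A\) with \(f_n(-A/\sqrt n)\). On the event used in \Cref{lem:concave-local-maximization}, where \(V_n\ge v/2\), \(|S_n|\le Av/8\), and the local error is at most \(\varepsilon\), we have \(f_n(\pm A/\sqrt n)\le -A^2v/8+\varepsilon\le -A^2v/16\). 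Each tail integral is then at most \(\int_A^\infty e^{-uAv/16}\,du=\tfrac{16}{Av}e^{-A^2v/16}\), which is arbitrarily small for large \(A\). The corresponding Gaussian tails of \(e^{g_n}\) are also small. The portion \((-\sqrt n/(X_{(n)}-1),-A)\) is covered by the same bound, because the integrand is nonnegative there and the bound holds up to the endpoint.

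Combining the three steps, an \(\varepsilon\)–\(\delta\) argument mirroring \Cref{lem:concave-local-maximization} yields
\begin{equation*}
  \rho_j(\Xb_n)=\frac{\int_{-\infty}^0 e^{g_n}}{\int_{\Rb}e^{g_n}}+o_p(1)=\Phi\bigl(-S_n/\sqrt{V_n}\bigr)+o_p(1).
\end{equation*}
This uses that the ratio map is continuous when the denominator stays bounded away from zero.
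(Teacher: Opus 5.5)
Your proposal is correct and follows essentially the same route as the paper: the change of variables $t=u/\sqrt n$ with diverging random endpoints, the local quadratic approximation, and concavity-based exponential tail bounds. The paper packages these into Lemma~\ref{lem:gaussian-ratio-concave-integrals} rather than inlining them as you do, and your one unjustified assertion, that $\max_i\xi_{n,i}>0$ with probability tending to one, needs only the paper's one-line argument: if all $\xi_{n,i}\in[-1,0]$ then $\xi_{n,i}\le-\xi_{n,i}^2$, so $S_n\le-\sqrt n\,V_n$, which contradicts $S_n=O_p(1)$ and $V_n\convP\sigma^2>0$.
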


As an immediate consequence of this result,
we obtain optimal limiting asymptotic power.
Indeed, if \(Z_h\sim \normal(h\dot\mu/\sigma,1),\) then for every
$\alpha\in(0,1)$,
\begin{align}
  \lim_{n\to\infty}
  \Pb_{Q_{h/\sqrt n}^n}\{\pBinPlus\le\alpha\}
  = \Pb\{ \Phi(-Z_h)\le\alpha\}
  =1-\Phi\left(z_{1-\alpha}-h\dot\mu/\sigma\right).
  \label{eq:Y-local-power}
\end{align}
Since \(\min\{\nplr,\pBinPlus\}\) is a valid nonparametric \(p\)-value by
Theorem~\ref{thm:min-nplr-binplus-valid} and is pointwise no larger than
\(\pBinPlus\), the upper bound of
Corollary~\ref{corollary:nonparametric-envelope} and the
limit~\eqref{eq:Y-local-power} imply
\begin{align*}
  \lim_{n\to\infty}\Pb_{Q_{h/\sqrt n}^n}
  \left\{\min\{\nplr(\Xb_n),\pBinPlus(\Xb_n)\}\le\alpha\right\}
  =
  1-\Phi\left(z_{1-\alpha}-h\dot\mu/\sigma\right)
\end{align*}
as well.
Thus $\pBinPlus$ and \(\min\{\nplr,\pBinPlus\}\) attain the best possible
nonparametric power at $1/\sqrt{n}$ scale local alternatives, while the
statistic $\nplr$ does not.

To prove \Cref{thm:Y-local-limit}, we follow a similar technique to that we
use to prove Theorem~\ref{thm:nplr-local-limit}.
We begin with a technical lemma to pass from a local quadratic approximation
of the log-likelihood to a Gaussian approximation of the ratio of integrals
defining \(\pBinPlus\), essentially performing a Laplace approximation.

\begin{lemma}[Gaussian ratio for concave integrals]
  \label{lem:gaussian-ratio-concave-integrals}
  Let \(a_n\convP-\infty\) and \(b_n\convP\infty\), and let \(f_n\) be a random
  function that is finite and concave on \((a_n,b_n)\).
  Let \((s_n,v_n)\) be random variables such that \(s_n=O_p(1)\)
  and \(v_n\convP\sigma^2>0\).
  Define the quadratics
  \begin{equation*}
    g_n(u):=us_n - \frac{u^2}{2}v_n
  \end{equation*}
  and
  assume the approximation that for every $A>0$,
  $\sup_{|u|\le A}|f_n(u)-g_n(u)| \cp 0$.
  Then
  \begin{align*}
    \frac{\int_{a_n}^0 e^{f_n(u)}\,du}{\int_{a_n}^{b_n} e^{f_n(u)}\,du}
    =
    \frac{\int_{a_n}^0 e^{g_n(u)} \, du}{
      \int_{a_n}^{b_n} e^{g_n(u)} \, du} + o_p(1)
    =
    \Phi\left(-\frac{s_n}{\sqrt{v_n}}\right) + o_p(1).
  \end{align*}
\end{lemma}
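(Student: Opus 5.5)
The argument splits the real line into a compact window $[-A,A]$ and two tails. On the window the uniform approximation of $f_n$ by $g_n$ transfers directly. Outside the window, concavity of $f_n$ together with the negative curvature of $g_n$ forces exponential decay, so the tails contribute negligibly to both integrals. I then identify the Gaussian ratio explicitly.

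\textbf{Setup.} Fix $\varepsilon,\delta>0$ and choose $A$ large. As in the proof of \Cref{lem:concave-local-maximization}, I work on an event $\mc{E}_n$ of probability at least $1-\delta$ on which:
\begin{itemize}
\item $|s_n|\le A\sigma^2/8$;
\item $v_n\in[\sigma^2/2,2\sigma^2]$;
\item $a_n<-A<A<b_n$;
\item $\sup_{|u|\le A}|f_n(u)-g_n(u)|\le\eta$, for a small $\eta$ chosen later.
\end{itemize}

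\textbf{Step 1: window integrals.} On $\mc{E}_n$ the integrals of $e^{f_n}$ over $[-A,0]$ and over $[-A,A]$ equal the corresponding $e^{g_n}$ integrals up to factors $e^{\pm\eta}$.

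\textbf{Step 2: tail control, the main obstacle.} For $u\ge A$, concavity of $f_n$ gives
\[
f_n(u)\le f_n(A)+\frac{u-A}{A/2}\,\{f_n(A)-f_n(A/2)\}.
\]
On $\mc{E}_n$ the increment satisfies
\[
f_n(A)-f_n(A/2)\le g_n(A)-g_n(A/2)+2\eta=\frac{A}{2}s_n-\frac{3A^2}{8}v_n+2\eta\le -cA^2
\]
for some $c>0$ once $A$ is large. Hence $f_n(u)\le f_n(A)-cA(u-A)$ for $u\ge A$, so
\[
\int_A^{b_n}e^{f_n}\le \frac{e^{f_n(A)}}{cA}\le \frac{e^{g_n(A)+\eta}}{cA}.
\]
The same bound holds for $\int_{a_n}^{-A}e^{f_n}$. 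Both tails are tiny relative to the central mass $\int_{-1}^{1}e^{g_n}\gtrsim e^{-\sigma^2}$, because $g_n(\pm A)\le -A^2\sigma^2/8$ on $\mc{E}_n$. The same tail bounds apply verbatim to $g_n$ itself, which is concave.

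\textbf{Step 3: combine.} Steps 1 and 2 together show that each numerator and denominator, for both $f_n$ and $g_n$, is within a relative error $O(\eta+e^{-c'A^2})$ of its $[-A,A]$ truncation. Since the ratio lies in $[0,1]$, the $f_n$-ratio minus the $g_n$-ratio is at most $\varepsilon$ in absolute value on $\mc{E}_n$, after taking $A$ large and then $\eta$ small. This proves the first equality.

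\textbf{Step 4: identify the Gaussian ratio.} Completing the square gives
\[
g_n(u)=\frac{s_n^2}{2v_n}-\frac{v_n}{2}\left(u-\frac{s_n}{v_n}\right)^2,
\]
so, after cancelling $e^{s_n^2/(2v_n)}$, the ratio equals
\[
\frac{\Pb\{a_n\le W\le 0\}}{\Pb\{a_n\le W\le b_n\}},\qquad W\sim\normal\!\left(\frac{s_n}{v_n},\frac{1}{v_n}\right),
\]
with probabilities taken conditionally on $(s_n,v_n,a_n,b_n)$. Because $a_n\to-\infty$, $b_n\to\infty$, $s_n=O_p(1)$, and $v_n$ is bounded away from $0$ and $\infty$ in probability, the truncations by $a_n$ and $b_n$ affect the probabilities by only $o_p(1)$. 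Therefore the ratio equals
\[
\Pb\{W\le 0\}+o_p(1)=\Phi\!\left(-\frac{s_n}{\sqrt{v_n}}\right)+o_p(1).
\]
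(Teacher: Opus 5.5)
Your proposal is correct and takes essentially the same route as the paper: transfer the uniform approximation on a window $[-A,A]$, use concavity (decreasing chord slopes) to force linear decay of $f_n$ and $g_n$ outside the window so the tails are negligible against the central Gaussian mass, and then identify the truncated Gaussian ratio with $\Phi(-s_n/\sqrt{v_n})$. The differences are cosmetic: you take the chord over $[A/2,A]$ where the paper uses $|f_n(0)|\le\varepsilon$ and $g_n(\pm A)\le -B$ to get $f_n(u)\le -\frac{B}{2A}|u|$, and you track relative rather than absolute errors; your completing-the-square step also makes explicit what the paper leaves to the continuous mapping theorem.
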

\begin{proof}
  Let \(\eta,\delta>0\).
  Since $s_n=O_p(1)$, choose $B\ge\sigma^2/8$ sufficiently large that
  \begin{equation*}
    \limsup_{n\to\infty}\Pb(|s_n|>B)<\delta,
    \qquad \frac{64}{\sigma^2}e^{-B/2}<\frac{\eta}{2}.
  \end{equation*}
  Set $A:=8B/\sigma^2$, and choose $\varepsilon>0$ small enough that
  \begin{equation*}
    \varepsilon<\frac{B}{4}
    \qquad\text{and}\qquad
    (e^\varepsilon-1)\frac{2\sqrt\pi}{\sigma}
    e^{B^2/\sigma^2}<\frac{\eta}{2}.
  \end{equation*}
  Define the event
  \begin{align*}
    \mc{E}_{n,B} \defeq \Big\{|s_n| \le B,
    ~~ v_n \ge \half \sigma^2,
    ~~ a_n\le-A, ~~ b_n\ge A,
    ~~ \sup_{-A \le u \le A} |f_n(u) - g_n(u)|
    \le \varepsilon \Big\}.
  \end{align*}
  The assumptions on $a_n,b_n,s_n,v_n$, and the local approximation imply
  \begin{equation*}
    \liminf_{n\to\infty}\Pb(\mc{E}_{n,B})\ge1-\delta.
  \end{equation*}
  We first claim that
  on the event $\mc{E}_{n,B}$,
  \begin{align}
    \label{eqn:linear-decreasing-fn}
    f_n(u) \le -\frac{B}{2A} |u| ~~ \mbox{for} ~~ u \in [a_n, -A]
    \cup [A, b_n].
  \end{align}
  Indeed, we have $\varepsilon \ge |f_n(0) - g_n(0)| = |f_n(0)|$,
  while
  $g_n(A) = A s_n - \frac{A^2}{2} v_n
  \le AB - \frac{A^2 \sigma^2}{4} \le -B$, and
  $g_n(-A) \le -B$ as well.
  Concavity and the criterion of increasing
  slopes~\cite[Ch.~I]{HiriartUrrutyLe93}, together with $\varepsilon<B/4$,
  thus imply the linear decrease~\eqref{eqn:linear-decreasing-fn}.

  Now we observe that
  \begin{align*}
    \int_{a_n}^0 e^{f_n(u)} du
    & = \int_{a_n}^{-A} e^{f_n(u)} du
    + \int_{-A}^0 e^{f_n(u)} du \\
    & = \int_{a_n}^{-A} e^{f_n(u)} du
    - \int_{a_n}^{-A} e^{g_n(u)} du
    + \int_{-A}^0 (e^{g_n(u)} - e^{f_n(u)}) du
    + \int_{a_n}^0 e^{g_n(u)} du.
  \end{align*}
  The first integral satisfies
  $\int_{a_n}^{-A} e^{f_n(u)} du
  \le \int_{A}^\infty e^{-B u / (2A)} du
  = \frac{2A}{B} e^{-B/2}$, while
  \begin{align*}
    \left|\int_{-A}^0 (e^{g_n(u)} - e^{f_n(u)}) du\right|
    \le (e^\varepsilon-1) \int_{-A}^0 e^{g_n(u)} du 
    \le (e^\varepsilon-1)\frac{2\sqrt\pi}{\sigma}
      e^{B^2/\sigma^2}.
  \end{align*}
  Observing that $g_n(u) \le |u| B - \frac{u^2 \sigma^2}{4}
  \le -\frac{B}{2A} |u|$ for $u \le -A$ as well,
  we see that
  $\int_{a_n}^{-A} e^{g_n(u)} du
  \le \frac{2A}{B} e^{-B/2}$.
  Combining these bounds yields
  \begin{align*}
    \bigg|\int_{a_n}^0 e^{f_n(u)} du
    - \int_{a_n}^0 e^{g_n(u)} du \bigg|
    \le \frac{4A}{B}e^{-B/2}
    +(e^\varepsilon-1)\frac{2\sqrt\pi}{\sigma}e^{B^2/\sigma^2}
    <\eta
  \end{align*}
  on the event $\mc{E}_{n,B}$.
  Because $\eta,\delta>0$ were arbitrary, we obtain
  $\int_{a_n}^0 e^{f_n(u)} du = \int_{a_n}^0 e^{g_n(u)} du + o_p(1)$.
  A completely similar calculation
  gives $\int_{a_n}^{b_n} e^{f_n(u)} = \int_{a_n}^{b_n} e^{g_n(u)} du + o_p(1)$.

  Because \(v_n\convP\sigma^2>0\) by assumption, the quantities
  \(\sqrt{2\pi/v_n}\exp\{s_n^2/(2v_n)\}\) arising
  in the integrals $\int e^{g_n(u)}du$ are bounded away from zero in
  probability.
  The continuous mapping theorem gives the lemma.
\end{proof}

\begin{proof}[Proof of \Cref{thm:Y-local-limit}]
  Let $k_n$ index a maximal observation $X_{n,i}$, so for $\xi_{n,i} =
  X_{n,i} - 1$ we have \(\xi_{n,k_n}=\max_{i\le n}\xi_{n,i}\).
  On the event $\xi_{n,k_n}\le0$, all $\xi_{n,i}\in[-1,0]$, so
  $\xi_{n,i}\le-\xi_{n,i}^2$ and therefore
  the sum $S_n = \frac{1}{\sqrt{n}}
  \sum_{i = 1}^n \xi_{n,i} \le-\sqrt{n} V_n$.
  This is incompatible with the limits~\eqref{eq:local-quadratic-regularity}
  that $S_n=O_p(1)$ and $V_n\convP\sigma^2>0$, so
  \begin{equation*}
    \Pb_{Q_{h/\sqrt n}^n} \{\xi_{n,k_n}>0\} \to 1.
  \end{equation*}
  On the event $\xi_{n,k_n} > 0$, the binomial statistic~\eqref{eq:def-Pvee}
  satisfies
  \begin{equation*}
    \pBinPlus(\Xb_n)
    =
    \frac{\int_{a_n}^{0}e^{f_n(u)}\,du}{\int_{a_n}^{\sqrt n}e^{f_n(u)}\,du},
    \qquad
    a_n:=-\frac{\sqrt n}{\xi_{n,k_n}},
  \end{equation*}
  where
  \begin{equation*}
    f_n(u):=\sum_{i\ne k_n}
    \log\left(1+\frac{u\xi_{n,i}}{\sqrt n}\right),
    \qquad a_n\le u\le\sqrt n .
  \end{equation*}
  The factors $1 + u \xi_{n,i} / \sqrt{n}$ are nonnegative on $a_n \le u \le
  \sqrt{n}$ because $k_n$ is maximal.
  Since $0<\xi_{n,k_n}\le\max_i|\xi_{n,i}|$ and $\max_i|\xi_{n,i}|/\sqrt
  n\convP0$ by assumption~\eqref{eq:local-quadratic-regularity}, we have
  $a_n\convP-\infty$.

  To apply Lemma~\ref{lem:gaussian-ratio-concave-integrals}, set
  \begin{equation*}
    s_n:=\frac1{\sqrt n}\sum_{i\ne k_n}\xi_{n,i},
    \qquad
    v_n:=\frac1n\sum_{i\ne k_n}\xi_{n,i}^2,
    \qquad
    g_n(u) \defeq u s_n - \frac{u^2}{2} v_n.
  \end{equation*}
  Deleting observation $k_n$ changes $S_n$ and $V_n$ by at most
  $\max_i|\xi_{n,i}|/\sqrt n$ and $\max_i \xi_{n,i}^2 / n$, respectively, so
  the convergence~\eqref{eq:local-quadratic-regularity} implies
  \begin{equation*}
    s_n = S_n + o_p(1) = O_p(1)
    ~~~ \mbox{and} ~~~
    v_n = V_n+o_p(1)
    \cp \sigma^2 > 0.
  \end{equation*}
  Because $|\log(1 + t) - t + \frac{t^2}{2}|
  \le |t|^3$ for $|t| \le \half$,
  for $A < \infty$ we have
  \begin{equation*}
    \sup_{-A \le u \le A} |f_n(u) - g_n(u)|
    \le \sum_{i = 1}^n \frac{A^3 |\xi_{n,i}|^3}{n^{3/2}}
    \le A^3 \cdot \frac{\max_{i \le n} |\xi_{n,i}|}{\sqrt{n}}
    \frac{1}{n} \sum_{i = 1}^n \xi_{n,i}^2 \cp 0
  \end{equation*}
  when $A \max_{i \le n} |\xi_{n,i}| / \sqrt{n} \le \half$.
  Thus \Cref{lem:gaussian-ratio-concave-integrals} applies with $b_n=\sqrt
  n$ and yields
  \begin{equation*}
    \pBinPlus(\Xb_n)
    =
    \Phi\left(-\frac{s_n}{\sqrt{v_n}}\right)+o_p(1)
    =
    \Phi\left(-\frac{S_n}{\sqrt{V_n}}\right)+o_p(1),
  \end{equation*}
  by the continuous mapping theorem.
\end{proof}

\subsection{Discussion and context}

We now place the preceding asymptotic results in two broader contexts.
First, although $H^2(Q_n,\nullbasic)$, the squared Hellinger distance from $Q_n$
to the null class, governs detectability in
Theorem~\ref{thm:np-hellinger-scale}, the mean excess $\Eb_{Q_n}X-1$ is a
simpler and more familiar measure of signal strength.
Therefore we relate the two quantities in \Cref{sec:mean-excess}, both for
regular families with finite variance and for Pareto-type families with infinite
variance.

We next relate our results to anytime-valid testing, where data arrive
sequentially and sampling stops in response to accumulating evidence while
preserving type-I error control~\citep{Waudby-SmithRa24,RamdasWa25}.
Let $\mc{F}_n\defeq\sigma(X_1,\ldots,X_n)$ and let $\tau$ denote the time at which the test
first rejects.
Then $\tau$ is a $(\mc{F}_n)$-stopping time.
For a null class $\mc{P}$, the test is \emph{anytime-valid at level $\alpha$} if
\begin{equation*}
  \sup_{P\in\mc{P}}\Pb_P(\tau<\infty)
  \le\alpha.
\end{equation*}
We show in \Cref{sec:anytime-valid} that this anytime-valid guarantee imposes a
sharp cost in local power: under $n^{-1/2}$-local alternatives from a DQM
family, any such test has trivial asymptotic power.

\subsubsection{Hellinger distances, normality, and mean excess}\label{sec:mean-excess}

While Theorem~\ref{thm:np-hellinger-scale} delineates the cases
$H^2(Q_n, \nullbasic) \gg 1/n$ and
$H^2(Q_n, \nullbasic) \ll 1/n$,
the more familiar measure of signal strength against the mean
null $\Eb X \le 1$ is the excess \(\delta_n \defeq \Eb_{Q_n}X-1\).
For quadratic-mean-differentiable alternatives with limiting variance
\(\sigma^2\) and local uniform integrability as in
Example~\ref{example:dqm-mean}, we can expand the Hellinger distance in
terms of the derivative~\eqref{eq:mean-derivative}.
For quadratic-mean-differentiable families, the
definition~\eqref{eq:dqm-assumption} yields the familiar Fisher-information
scaling $H^2(Q_\theta, Q_0) = \finfo_0 \theta^2 / 8 + o(\theta^2)$ of the
Hellinger distance~\citep[Sec.~7.2]{VanDerVaart98} to the point null \(Q_0\).
In our case, Lemma~\ref{lem:hellinger-projection}
extends this to the full null class $\nullbasic$:
Proposition~\ref{proposition:regularly-varying-np-local} in
Appendix~\ref{sec:regularly-varying-np-local} shows that
\begin{align*}
  H^2(Q_\theta, \nullbasic) = \frac{\dot{\mu}^2 \theta^2}{8 \sigma^2}
  + o(\theta^2)
\end{align*}
as $\theta \downarrow 0$.
Thus \(H^2(Q_{h/\sqrt{n}},\nullbasic) = \dot{\mu}^2 h^2 / (8 n \sigma^2)
+o(h^2/n)\) uniformly in $h \ll \sqrt{n}$,
and so we see that
testing
\begin{equation*}
  H_0 : \Eb[X] \le 1
  ~~~ \mbox{versus} ~~~
  H_1 : Q_{h_n/\sqrt{n}}
\end{equation*}
becomes asymptotically impossible or exact precisely as $h_n \ll 1$ or
$h_n \gg 1$.

By contrast, different tail behavior of the alternative distributions
$Q_\theta$ than the uniform second moment in Example~\ref{example:dqm-mean}
can yield different local alternatives.
As one example,
the family $(Q_\theta)$ of alternatives has tails
scaling as a polynomial $x^{-\rho}$ for some $1 < \rho < 2$,
where we assume (for simplicity) that
\begin{align*}
  \Eb_{Q_\theta} X = 1 + \theta
\end{align*}
and for some values $C_\theta$ with $0 < c_- \le C_\theta \le c_+ < \infty$
that
\begin{align}
  \label{eqn:pareto-tail}
  \limsup_{x \to \infty}
  \sup_{0 < \theta < \theta_0}
  \left|\frac{Q_\theta(X > x)}{C_\theta \cdot x^{-\rho}}
  - 1 \right| = 0.
\end{align}
Then Proposition~\ref{proposition:heavy-tail-np-local} in
Appendix~\ref{sec:heavy-tail-np-local} shows that
as $\theta \downarrow 0$,
\begin{equation}
  \label{eq:regular-varying-hellinger}
  H^2(Q_\theta, \nullbasic)
  =
  \frac{\rho-1}{2\rho}
  \left(
    -\frac{\rho^2\pi C_\theta}{\sin(\pi\rho)}
  \right)^{-\frac{1}{\rho-1}}
  \cdot \theta^{\frac{\rho}{\rho-1}}
  + o\left(\theta^{\frac{\rho}{\rho - 1}}\right).
\end{equation}
That is, the critical radius for mean detection
scales as $n^{\frac{1 - \rho}{\rho}} \gg n^{-1/2}$.

\subsubsection{Anytime-valid tests}\label{sec:anytime-valid}

Consider any test $(T_n)$ that provides any-time validity under the null $H_0$,
so that $\Pb_{0,n}(T_n = 1) \le \alpha$ for all $n$.
Assume that the rejection regions for the test are monotonically
non-decreasing, so that $A_n \defeq \{T_n = 1\}$ satisfies $A_{n-1} \subset
A_n$.
From these, we may construct the stopping times
$\tau = \inf\{n \mid T_n = 1\}$.
For example, any test based on an e-process $E_n$ or sub-martingale that
rejects if $\max_{i \le n} E_i > t$ for a threshold $t$ satisfies this
(this is the Ville-style test).
It turns out that any such test has trivial power under local
alternatives.

\newcommand{\contiguousmutual}{\mathop{\triangleleft\triangleright}}
\newcommand{\contiguous}{\triangleleft}

To see this, we develop a few standard consequences of
quadratic mean differentiability and contiguity,
where we recall~\cite[Ch.~6]{VanDerVaart98}
that for two sequences $P_n$ and $Q_n$ of distributions,
$Q_n$ is contiguous with respect to $P_n$ if
$P_n(A_n) \to 0$ implies $Q_n(A_n) \to 0$, written
$Q_n \contiguous P_n$.
When sequences are mutually contiguous, we write
$Q_n \contiguousmutual P_n$.
We make the following observation:
\begin{lemma}
  \label{lemma:qmd-are-tv-close}
  Let $m \in \mathbb{N}$,
  $\{P_h\}$ be a q.m.d.\ family at $P_0$,
  and $h_n \to h \in \Rb$.
  Then
  \begin{align*}
    \lim_{n \to \infty} \tvnorm{P_{h_n/\sqrt{n}}^m - P_0^m} = 0
    ~~~ \mbox{and} ~~~
    P_{h_n/\sqrt{n}}^n \contiguousmutual P_0^n.
  \end{align*}
\end{lemma}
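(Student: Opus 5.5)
The plan is to control both claims through Hellinger distance, using the standard DQM expansion $H^2(P_\theta,P_0)=\finfo_0\theta^2/8+o(\theta^2)$ from \citet[Sec.~7.2]{VanDerVaart98}. For the first claim, $m$ is fixed and only $\theta_n:=h_n/\sqrt n\to0$ matters. Tensorization of the Hellinger affinity gives
\begin{equation*}
  1-H^2(P_{\theta_n}^m,P_0^m)=\{1-H^2(P_{\theta_n},P_0)\}^m .
\end{equation*}
Hence
\begin{equation*}
  H^2(P_{\theta_n}^m,P_0^m)\le m\,H^2(P_{\theta_n},P_0)=O(m\theta_n^2)\to 0 .
\end{equation*}
The inequality $\tvnorm{P-Q}\le\sqrt{2}\,H(P,Q)$, in the normalization with $H^2=1-\int\sqrt{dPdQ}$, then gives the total variation limit. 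The same tensorization with $m=n$ yields
\begin{equation*}
  1-H^2(P_{\theta_n}^n,P_0^n)=\bigl(1-\tfrac{\finfo_0h_n^2}{8n}+o(1/n)\bigr)^n\to e^{-\finfo_0h^2/8}>0 .
\end{equation*}
So the product measures stay at Hellinger distance bounded away from $1$, but this alone does not give contiguity.

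For mutual contiguity I will take the standard route, Le Cam's first lemma. By \citet[Thm.~7.2]{VanDerVaart98}, applied with $h_n\to h$ as in their Theorem~7.2 and Example~6.5, the log likelihood ratio expands as
\begin{equation*}
  \log\frac{dP^n_{h_n/\sqrt n}}{dP^n_0}=\frac{h}{\sqrt n}\sum_{i=1}^n\score_0(X_i)-\frac{h^2\finfo_0}{2}+o_{P_0}(1).
\end{equation*}
Under $P_0^n$ this converges in distribution to $\normal(-h^2\finfo_0/2,\,h^2\finfo_0)$. Since $\Eb e^{N}=1$ for this limit $N$, Le Cam's first lemma yields $P^n_{h_n/\sqrt n}\contiguous P_0^n$. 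For the reverse direction, the limiting law puts no mass at $-\infty$, i.e.\ the likelihood ratio $dP_0^n/dP^n_{h_n/\sqrt n}$ does not escape to infinity, and the lemma's symmetric statement gives $P_0^n\contiguous P^n_{h_n/\sqrt n}$. The case $h=0$ is degenerate but covered: the limit is the point mass at $0$.

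I expect the main obstacle to be the uniformity needed for $h_n\to h$ rather than fixed $h$. Both the $o(\theta^2)$ Hellinger expansion and the LAN expansion must hold along the sequence $\theta_n$. DQM at $0$ gives this directly, since the remainder is $o(\theta^2)$ as $\theta\to0$ along any sequence, so I would cite the DQM definition~\eqref{eq:dqm-assumption} with $\theta=\theta_n$ rather than re-deriving LAN.
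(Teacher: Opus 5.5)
Your proposal is correct and follows essentially the same route as the paper. For the fixed-$m$ claim you both use the DQM expansion $H^2(P_\theta,P_0)=\finfo_0\theta^2/8+o(\theta^2)$, tensorization of the Hellinger affinity, and the bound of total variation by $\sqrt{2}$ times the Hellinger distance; for mutual contiguity you both rely on local asymptotic normality. The paper only asserts the contiguity step as standard, so your explicit argument via \citet[Thm.~7.2]{VanDerVaart98} and both directions of Le Cam's first lemma fills in what the paper leaves to the reference. Your remark that an affinity bounded away from zero at $m=n$ would not by itself give contiguity is also correct.
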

\begin{proof}
  Recall~\cite[Ch.~7]{VanDerVaart98} that
  for the Fisher information $\finfo = \Eb_0[\score_0^2]$,
  $H^2(P_\delta, P_0) = \frac{\delta^2}{8} \finfo + o(\delta^2)$ as
  $\delta \to 0$.
  Then for any $m \in \mathbb{N}$,
  \begin{align*}
    H^2\left(P_{h_n/\sqrt{n}}^m, P_0^m\right)
    & = 1 - (1 - H^2(P_{h_n/\sqrt{n}}, P_0))^m
    = 1 - \left(1 - \frac{h_n^2}{8 n} \finfo + o(1/n)\right)^m \to 0
  \end{align*}
  as $n \to \infty$.
  The mutual contiguity is similarly standard for quadratic mean
  differentiable families via
  local asymptotic normality~\cite[Ch.~7.3]{VanDerVaart98}.
  Using the standard inequality
  $\tvnorm{P - Q} \le \sqrt{2} H(P, Q)$ gives the result.
\end{proof}

The conditions Lemma~\ref{lemma:qmd-are-tv-close} provides are
enough to guarantee that any such stopping-time-based procedure
has asymptotically no power against local alternatives.
We can state this as a (very slightly) more general result:
\begin{lemma}
  Let $P_0$ and $(Q_n)_{n \ge 1}$ be probability measures such that
  for every $m \in \mathbb{N}$,
  \begin{equation}
    \label{eq:tv-condition}
    \lim_{n \to \infty} \tvnorm{Q_n^m - P_0^m} = 0,
  \end{equation}
  and for which $Q_n^n \contiguous P_0^n$.
  Let $\tau$ be stopping time and
  $\Pb_0$ be the distribution of $\{X_t\}_{t \in \mathbb{N}}$ under $P_0$.
  Then
  \begin{align*}
    \lim_{n \to \infty} Q_n^n(\tau \le n)
    = \Pb_0(\tau < \infty).
  \end{align*}
\end{lemma}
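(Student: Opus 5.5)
We will prove $\liminf_n Q_n^n(\tau\le n)\ge \Pb_0(\tau<\infty)$ and $\limsup_n Q_n^n(\tau\le n)\le \Pb_0(\tau<\infty)$ separately. Here $Q_n^n(\tau\le n)$ denotes the probability that the first $n$ coordinates, drawn i.i.d.\ from $Q_n$, trigger stopping by time $n$. Because $\tau$ is a stopping time, $\{\tau\le k\}$ is $\mc{F}_k$-measurable for each $k$, so it is an event depending only on $(X_1,\ldots,X_k)$.

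\emph{Lower bound.} Fix $m\in\mathbb{N}$. For $n\ge m$ we have $\{\tau\le m\}\subset\{\tau\le n\}$, so
\begin{equation*}
  Q_n^n(\tau\le n)\ge Q_n^m(\tau\le m)\ge P_0^m(\tau\le m)-\tvnorm{Q_n^m-P_0^m}.
\end{equation*}
Condition~\eqref{eq:tv-condition} then gives $\liminf_n Q_n^n(\tau\le n)\ge \Pb_0(\tau\le m)$. Letting $m\to\infty$ and using continuity from below yields the bound $\Pb_0(\tau<\infty)$.

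\emph{Upper bound.} Split the event as
\begin{equation*}
  Q_n^n(\tau\le n)\le Q_n^m(\tau\le m)+Q_n^n(m<\tau\le n).
\end{equation*}
The first term converges to $\Pb_0(\tau\le m)$ by~\eqref{eq:tv-condition}. For the second term we use contiguity. Note that
\begin{equation*}
  P_0^n(m<\tau\le n)\le \Pb_0(m<\tau<\infty)=:\epsilon_m,
\end{equation*}
and $\epsilon_m\to0$ as $m\to\infty$. The main obstacle is that contiguity only transfers events whose $P_0^n$-probability tends to zero. Here the $P_0^n$-probability is merely bounded by a small $\epsilon_m$, uniformly in $n$. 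To handle this, use a diagonal choice: pick $m_n\to\infty$ slowly enough that $\Pb_0(\tau\le m_n)$ is still approximated in the first term. Concretely, by~\eqref{eq:tv-condition} and a standard diagonal argument, there is a sequence $m_n\to\infty$ with $m_n\le n$ and $\tvnorm{Q_n^{m_n}-P_0^{m_n}}\to0$.

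With this choice, the events $B_n:=\{m_n<\tau\le n\}$ satisfy $P_0^n(B_n)\le\epsilon_{m_n}\to0$. Contiguity $Q_n^n\contiguous P_0^n$ therefore gives $Q_n^n(B_n)\to0$. Meanwhile
\begin{equation*}
  \bigl|Q_n^{m_n}(\tau\le m_n)-\Pb_0(\tau\le m_n)\bigr|\le\tvnorm{Q_n^{m_n}-P_0^{m_n}}\to0,
\end{equation*}
and $\Pb_0(\tau\le m_n)\le\Pb_0(\tau<\infty)$. Together these give $\limsup_n Q_n^n(\tau\le n)\le\Pb_0(\tau<\infty)$.

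\emph{Diagonal step.} This is the one point to check carefully. For each $k$, let $N_k$ be increasing with $\tvnorm{Q_n^k-P_0^k}\le 1/k$ for all $n\ge N_k$. Set $m_n:=\max\{k\le n: N_k\le n\}$. Then $m_n\to\infty$ and $\tvnorm{Q_n^{m_n}-P_0^{m_n}}\le 1/m_n$, which is all the upper bound needs.
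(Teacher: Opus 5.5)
Your proof is correct. It uses the same decomposition as the paper, $\{\tau\le n\}=\{\tau\le m\}\cup\{m<\tau\le n\}$, with total variation controlling the first piece and contiguity the second. It differs only in how you get around the obstacle you identified, that contiguity only speaks about events whose null probability tends to zero.

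The paper keeps $m$ fixed and invokes contiguity in its equivalent $\epsilon$--$\delta$ form: for each $\epsilon>0$ there are $\delta>0$ and $N$ such that $P_0^n(B_n)<\delta$ implies $Q_n^n(B_n)<\epsilon$ for all $n\ge N$. Choosing $m$ with $\Pb_0(m<\tau<\infty)<\delta$ then gives $\limsup_n Q_n^n(m<\tau\le n)\le\epsilon$, and the paper lets $m\to\infty$ after $n\to\infty$.

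You instead let $m=m_n\to\infty$ along a diagonal with $\tvnorm{Q_n^{m_n}-P_0^{m_n}}\to0$. Then $P_0^n(m_n<\tau\le n)\le\Pb_0(m_n<\tau<\infty)\to0$, and the sequential definition of contiguity applies verbatim. Your route needs only the literal definition of contiguity. The $\epsilon$--$\delta$ reformulation the paper relies on is true, but it itself requires a short subsequence argument that the paper leaves implicit. The price is the diagonal extraction, which upgrades the fixed-$m$ total-variation hypothesis to one along growing dimensions $m_n$. The paper's route keeps a clean iterated limit and uses the hypothesis only at fixed $m$.

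Your separate lower bound via $\{\tau\le m\}\subset\{\tau\le n\}$ matches the paper's argument, where it appears implicitly as nonnegativity of $Q_n^n(A_n\setminus A_m)$. Writing it separately makes explicit that contiguity is needed only for the upper bound.
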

\noindent
So any level-$\alpha$ anytime valid test has trivial power under local
alternatives: Lemma~\ref{lemma:qmd-are-tv-close} shows that the local
alternatives $Q_{h/\sqrt{n}}$ for a DQM family satisfy the
condition~\eqref{eq:tv-condition}.
\begin{proof}
  Define the
  sets $A_m = \{\tau \le m\}$ that the stopping time is at most
  $m$, so that $A_\infty \defeq \cup_{m \in \mathbb{N}} A_m
  = \{\tau < \infty\}$ and $A_m \in \mc{F}_m = \sigma(X_1, \ldots, X_m)$.
  For $m \le n$, we have decomposition
  \begin{equation*}
    Q_n^n (A_n) = Q_n^n(A_m) + Q_n^n(A_n \setminus A_m)
    = Q_n^m(A_m) + Q_n^n(A_n \setminus A_m)
  \end{equation*}
  because $A_m \in \mc{F}_m$.
  The total variation
  limit~\eqref{eq:tv-condition} shows
  that \(Q_n^{m}(A_m) \to P_0^m(A_m)\)
  as \(n \to \infty\).
  For the second term \( Q_n^n(A_n \setminus A_m) \), let
  $\epsilon > 0$ be otherwise arbitrary.
  The contiguity
  condition $Q_n^n \contiguous P_0^n$ guarantees that
  there exists $\delta > 0$ and $N \in \mathbb{N}$ such that
  for $n \ge N$ and $B_n \in \mc{F}_n$, $P_0^n(B_n) < \delta$
  implies that $Q_n^n(B_n) < \epsilon$.
  Observe that monotonically as $n \to \infty$,
  \begin{align*}
    P_0^n(A_n \setminus A_m)
    \uparrow \Pb_0(A_\infty \setminus A_m)
    = \Pb_0(\tau < \infty, \tau > m).
  \end{align*}
  By continuity of measure,
  $\lim_{m \uparrow \infty} \Pb_0(A_\infty \setminus A_m) = 0$,
  so
  we may choose $m$ large enough that
  \begin{align*}
    P_0^n(A_n \setminus A_m) \le \Pb_0(A_\infty \setminus A_m) < \delta.
  \end{align*}
  Then $n \ge \max\{N, m\}$ implies
  \(Q_n^n(A_n \setminus A_m) < \epsilon\),
  so
  \begin{equation*}
    \lim_{n \to \infty} Q_n^n(A_n \setminus A_m) \le \epsilon.
  \end{equation*}
  Because $\epsilon > 0$ was arbitrary,
  we obtain that $\lim_{m \to \infty} \lim_{n \to \infty}
  Q_n^n(A_n \setminus A_m) = 0$, and so
  \begin{align*}
    \lim_{n \to \infty} Q_n^n(A_n) =
    \lim_{m \to \infty} \lim_{n \to \infty} Q_n^m(A_m)
    = \lim_{m \to \infty} P_0^m(A_m) = \Pb_0(A_\infty)
  \end{align*}
  as desired.
\end{proof}

\section{Experiments}\label{sec:experiments}

Our asymptotic theory makes concrete predictions about the behavior of the
proposed tests under regular local alternatives
with \(2nH^2(Q_{n,\kappa},\nullIid)\to\kappa\).
We design our experiments to test how quickly these asymptotic predictions kick
in for different types of underlying distributions.
We also compare the proposed statistics against existing statistics valid over
the i.i.d.\ null~\eqref{eq:iid-null}.
Appendix~\ref{sec:computation} details the (efficient) algorithms for
computing each of the statistics in this paper.

\subsection{Distributions and local alternatives}

\begin{figure}[htbp]
  \centering
  \begin{subfigure}{.5\linewidth}
    \includegraphics[width=\columnwidth]{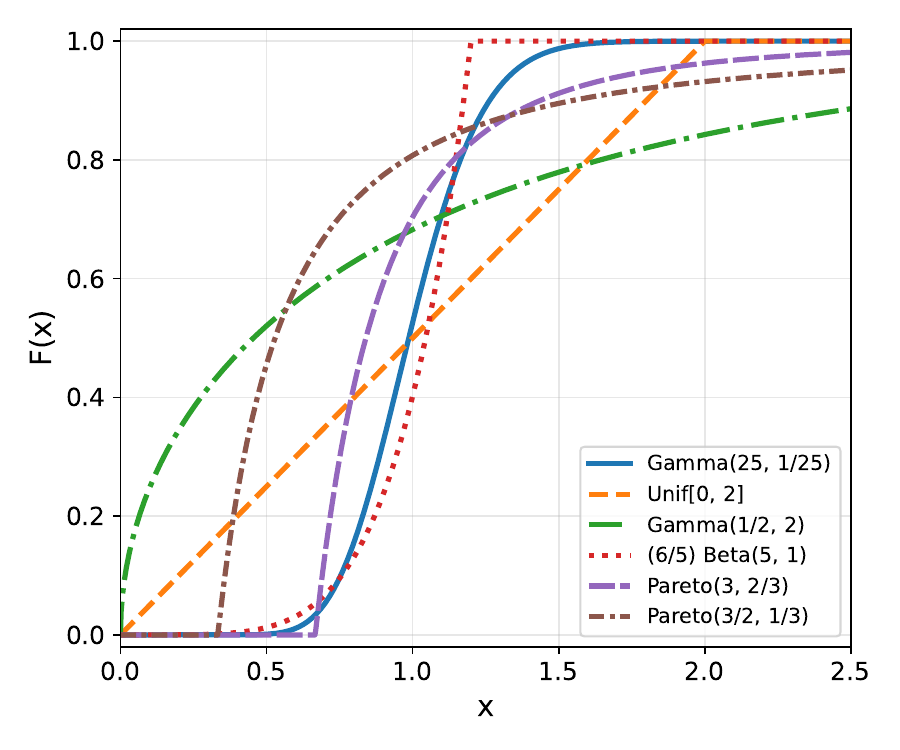}
    \caption{Cumulative distribution functions.}
  \end{subfigure}%
  \begin{subfigure}{.5\linewidth}
    \includegraphics[width=\columnwidth]{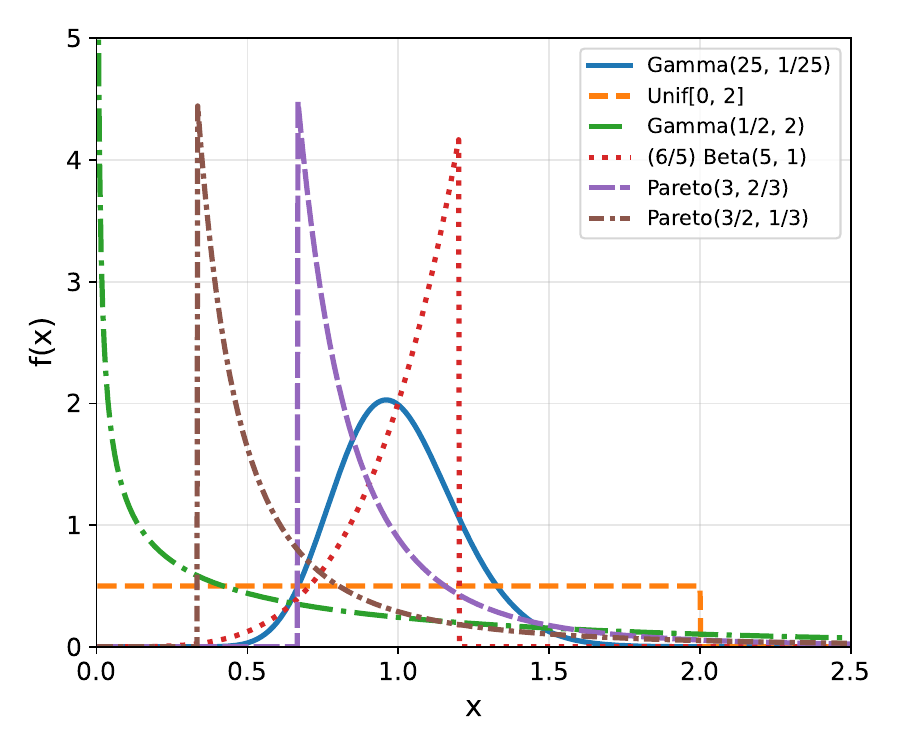}
    \caption{Probability density functions.}
  \end{subfigure}
  \caption{Distributions used in experiments.}
  \label{fig:distributions}
\end{figure}

We evaluate the competing \(p\)-values on a small collection of distributional
families that represent qualitatively different shapes of nonnegative data.
We normalize all null distributions to have mean one.  Specifically, we use the
following six baseline distributions:
\begin{align*}
\text{concentrated continuous:}\quad
& X_0 \sim \mathrm{Gamma}(25,1/25), \\
\text{evenly spread continuous:}\quad
& X_0 \sim \mathrm{Unif}[0,2], \\
\text{right-skewed finite-variance:}\quad
& X_0 \sim \mathrm{Gamma}(1/2,2), \\
\text{left-skewed:}\quad
& X_0 = \frac65 Y,\qquad Y\sim \mathrm{Beta}(5,1), \\
\text{heavy-tailed finite-variance:}\quad
& X_0 \sim \mathrm{Pareto}(3,2/3), \\
\text{infinite-variance finite-mean:}\quad
& X_0 \sim \mathrm{Pareto}(3/2,1/3).
\end{align*}
Here \(\mathrm{Gamma}(k,\theta)\) denotes the gamma distribution with shape
\(k\) and scale \(\theta\), and \(\mathrm{Pareto}(a,x_m)\) denotes the Pareto
distribution with tail index \(a\) and scale \(x_m\), so that
\(\Eb X_0 = a x_m/(a-1)\) when \(a>1\).  Thus each of the six choices
above satisfies \(\Eb X_0=1\).
In particular, we highlight the inclusion of the last distribution, which has finite mean but infinite variance.
\Cref{fig:distributions} shows the cdf and pdf of these distributions.

For each baseline distribution \(X_0\), we generate local alternatives by a multiplicative mean shift:
\begin{align*}
  X_{n,\kappa}
  :=
  \left(1+\frac{h}{n^{\gamma}}\right) X_0,
\end{align*}
and write $Q_{n,\kappa}$ to be the law of $X_{n,\kappa}$.
For $\kappa>0$, we choose the constants $h>0$ and $\gamma>0$ so that
\(2n H^2(Q_{n,\kappa},\nullIid) \longrightarrow \kappa.\)
For $\kappa=0$, we set $h=0$, recovering the null distribution.
For each of the five finite-variance baselines,
\Cref{proposition:regularly-varying-np-local} gives $\gamma=1/2$ and \( h=2\sigma\sqrt{\kappa}. \)
For $X_0\sim\mathrm{Pareto}(3/2,1/3)$,
\Cref{proposition:heavy-tail-np-local} gives $\gamma=1/3$ and
\(  h=\left(\frac{9\pi^2\kappa}{16}\right)^{1/3}.
\)

\subsection{Results}

\foreach \alphav/\alphat in {0-05/0.05} {
  \begin{figure}[htb]
      \centering
    \foreach \index in {0, 1} {
      \begin{subfigure}{\linewidth}
      \includegraphics[width=\columnwidth]{plots/power_grid_alpha-\alphav_concise\index.pdf}
      \end{subfigure}
    }
    \caption{
      Power plots of various statistics at $\alpha=\alphat$.
      Rows correspond to different $\kappa$, and 
      columns to different distributions.
    }
    \label{fig:level-\alphav-concise}
  \end{figure}
}

We show concise results in \Cref{fig:level-0-05-concise}.
Appendix~\ref{sec:additional-experiments} reports extensive results for more
values of $\kappa$ and $\alpha$.

We plot the power against sample size of various statistics at 
significance level \(\alpha=0.05\) and different strengths of the local alternatives \(\kappa\).
Different rows correspond to different values of \(\kappa\), and different columns
correspond to different distributions.
The shaded areas correspond to the 95\% confidence intervals of the power
estimates over 1024 runs.
We also plot reference lines corresponding to the nominal level \(\alpha\) and the
asymptotic power, \(1 - \Phi(z_{1-\alpha} - 2\sqrt{\kappa})\).
(This asymptotic power does not
apply to \(\mathrm{Pareto}(3/2,1/3)\), which has infinite variance.)

Aside from $\min\{\nplr,\pBinPlus\}$, $\pBinPlus$, and $\nplr$, we give brief
explanations of the other reported statistics in \Cref{tab:stats}.
Note that the studentized mean test is not valid for finite samples---we
include it only for comparison purposes.
Appendix~\ref{sec:existing-statistics} provides a more detailed discussion.

\begin{table}[htbp]
  \centering
  \caption{Baseline statistics included in experiments.}
  \begin{tabularx}{\textwidth}{l|
    >{\hsize=1.0\hsize\linewidth=\hsize\raggedright\arraybackslash}X|
    >{\hsize=1.0\hsize\linewidth=\hsize\raggedright\arraybackslash}X
    }
    \toprule
    Statistic & Description & Reference \\
    \midrule
    ESP & Elementary symmetric polynomial statistic & \citet[Theorem 1]{MingShWa26} \\
    Betting\_$\alpha$ & recommended betting statistic at level $\alpha$ & \citet[Thm. 3 and eq. (26)]{Waudby-SmithRa24} \\
    dKelly & diversified Kelly betting & \citet{Waudby-SmithRa24} \\
    PrPlEB\_$\alpha$ & plug-in empirical Bernstein at level $\alpha$ &
                                                                       \citet[equations (13) to (15)]{Waudby-SmithRa24} \\
    Anderson & CDF-band statistic based on DKW inequality & \citet{Anderson69,Massart90} \\
    \emph{StudentT}* & Studentized mean test, (\textbf{invalid}) &  e.g., see \citet{LehmannRo05} \\
    \bottomrule
  \end{tabularx}
  \label{tab:stats}
\end{table}

The case where $\kappa=0$ corresponds to the null, and we include it not for
comparing power but as a sanity check to ensure that all valid statistics have
power below the nominal level $\alpha=0.05$.
Not surprisingly, the studentized mean test is invalid.

For $\kappa>0$, across all distributions, $\pBinPlus$ and
$\min\{\nplr,\pBinPlus\}$ are substantially more powerful than the other
valid statistics, with \(\nplr\) and ESP often the closest competitors.
For the five finite-variance distributions, the power of $\pBinPlus$ and
$\min\{\nplr,\pBinPlus\}$ approaches the predicted asymptotic envelope as
\(n\) grows and is close to that of the invalid studentized mean test by
\(n=1000\).  In contrast, the power of \(\nplr\) remains below the envelope at
large sample sizes, consistent with its strictly smaller asymptotic power.
The Gaussian power benchmark does not apply to the infinite-variance Pareto
distribution, but the qualitative advantage of $\pBinPlus$ and
$\min\{\nplr,\pBinPlus\}$ persists in this setting.
In cases where $\nplr$ is better at small sample sizes and $\pBinPlus$ is better
at larger sample sizes, $\min\{\nplr,\pBinPlus\}$ dominates both, as expected.
Over the displayed range, the other valid statistics, including the betting
statistics, lose power as \(n\) grows and have little or no power at the largest
sample sizes considered.

Appendix~\ref{sec:additional-experiments} includes similar results for other
significance levels, \(\alpha=0.1\) and \(\alpha=0.01\), and other values of
\(\kappa\).

\section{Discussion and Conclusion}

We approach exact nonparametric mean testing under nonnegativity through
deterministic certificates rather than through an explicit least-favorable
null distribution.
The leave-one-out duality framework reduces finite-sample validity to pointwise
inequalities whose multiplier structure is compatible with the conditional mean
null.
This perspective proves the validity of the nonparametric likelihood-ratio
statistic of \citet{WangZh03}, yields the new generalized binomial
statistic \(\pBinPlus\), and gives a switching principle for combining
valid \(p\)-values without paying the usual cost of a union bound.
The resulting statistics are both theoretically and empirically powerful.

\paragraph{Extensions beyond mean testing.}
Is the dual-multiplier idea specific to nonparametric mean testing, or is it
more broadly applicable?
For monotone global-null tests valid under the product-uniform law, the same
framework constructs explicit dual certificates that establish validity under
every conditionally superuniform law, and thus yields an anytime-valid
extension, reproducing the results of 
\citep{KoningVanMeer26}.

Fix \(\alpha\in(0,1)\), and consider a global-null test
\(\varphi:[0,1]^n\to[0,1]\) satisfying
\begin{align*}
  \int_{[0,1]^n}\varphi(\bm{u})\,d\bm{u}\le\alpha.
\end{align*}
The test is based on sequentially revealed \(p\)-values \(P_1,\ldots,P_n\), and
the global null means that all component nulls hold.
Assume that \(\varphi\) is coordinatewise nonincreasing --- decreasing a
\(p\)-value cannot make rejection less likely --- and the \(p\)-values are
conditionally superuniform, meaning that, for all \(k\in[n]\)
and \(t\in[0,1]\), almost surely \(\Pb(P_k\le t\mid P_{1:k-1})\le t\), or
equivalently, \(\Eb\bigl[\one\{P_k\le t\}-t\mid P_{1:k-1}\bigr]\le0\).
Thus the family of centered thresholds \(\one\{P_k\le t\}-t\) is the global-null
analogue of the mean increment \(X_i-1\): predictable nonnegative mixtures of
these provide the dual certificate.
In other words, we seek predictable nonnegative multipliers \(\Lambda_k\)
such that
\begin{align*}
  \varphi(\bm{p})
  \le\alpha+\sum_{k=1}^n\int_0^1
  \bigl(\one\{p_k\le t\}-t\bigr)
  \Lambda_k(dt;\bm{p}_{1:k-1}).
\end{align*}
Immediately, we see that under every conditionally superuniform law \(Q\),
taking expectations in this inequality gives \(\Eb_Q[\varphi(\bm{P})]\le\alpha\).
We can also directly construct the multipliers by
\begin{align*}
  \varphi_k(\bm{p}_{1:k})
  &:=\int_{[0,1]^{n-k}}
  \varphi(\bm{p}_{1:k},\bm{u}_{k+1:n})\,d\bm{u}_{k+1:n},
  && k=0,\ldots,n,\\
  \Lambda_k([t,1];\bm{p}_{1:k-1})
  &:=\varphi_k(\bm{p}_{1:k-1},t^-)
  -\varphi_k(\bm{p}_{1:k-1},1),
  && k\in[n],\quad t\in[0,1],
\end{align*}
where \(t^-\) is the left limit.
Each section \(u\mapsto\varphi_k(\bm{p}_{1:k-1},u)\) is nonincreasing, so
the second line defines a nonnegative measure \(\Lambda_k\) that depends only
on the history \(\bm{p}_{1:k-1}\), and is therefore predictable.
Integrating against these measures and summing over \(k\) telescopes from
\(\varphi_0\le\alpha\) to \(\varphi_n=\varphi\), yielding the desired
certificate.
Under the product-uniform law,
\(\bigl(\varphi_k(\bm{P}_{1:k})\bigr)_{k=0}^n\) is the Doob martingale of
\(\varphi(\bm{P})\), recovering the induced-test construction of
\citet{KoningVanMeer26}.
Under every conditionally superuniform law \(Q\), monotonicity makes the same
process a supermartingale, so the extension implies an anytime-valid test; at
the terminal time, this also shows that the product-uniform law is least
favorable independent of the form of \(\varphi\).
Mean testing has no analogous universal least-favorable law, hence validity over
the iid null \(\nullIid^n\) does not imply validity over the larger
sequential class \(\mc{P}_{\textup{seq}}^n\).

\paragraph{Using duality to discover statistics.}
Is this duality framework just a clever method to prove the validity of
statistics found in other ways, or can it also guide the construction of new
statistics?
Although \Cref{sec:binplus} motivates the final form of \(\pBinPlus\) through the
exact binomial \(p\)-value, our discovery proceeded in the opposite direction:
we first determined the form of the dual multipliers and these in turn motivated
the statistic.

Recall that
\[
  \dual^{\lamBinPlus}_\alpha(\bm{x}) = \alpha+ \sum_{i=1}^n(x_i-1)\lamBinPlus_\alpha(\bm{x}_{-i})
  \ge \one\{\pBinPlus(\bm{x})\le\alpha\} .
\]
Since the statistic \(\pBinPlus\) is symmetric in its arguments, averaging the
multipliers over all permutations preserves the required inequalities and yields
a single multiplier symmetric in its arguments.
We may therefore assume without loss of generality that \(\lamBinPlus_\alpha\) is
symmetric.
Proceeding heuristically, suppose that \(\dual^{\lamBinPlus}_\alpha(\bm{x})=0\)
whenever \(\prod_i x_i=0\).
The resulting system of linear equations for \(\lamBinPlus_\alpha(\bm{z})\) can be solved recursively
in the number of nonzero coordinates of \(\bm{z}\),
yielding \(\lamBinPlus_\alpha(\bm{z})=\alpha \polyRightInt_1(0, \bm{z})\).
Similarly, if we instead suppose heuristically
that \(\dual^{\lamBinPlus}_\alpha(\bm{x})=1\) whenever the top two coordinates
of \(\bm{x}\) are equal and greater than $1$, we
obtain \( \lamBinPlus_\alpha(\bm{z})=(1-\alpha)\polyLeftInt_1(0, \bm{z})\).

Combining these two heuristics motivates the final form
of
\(\lamBinPlus_\alpha(\bm{z}) := \min \{\alpha \polyRightInt_1(0, \bm{z}), (1-\alpha)\polyLeftInt_1(0, \bm{z})\}\).
We then read the statistic from these multipliers by
taking \(\pBinPlus(\bm{x})\) to be the smallest level \(\alpha\) at
which \( (1-\alpha)\polyLeftInt_i(\bm{x}) \le \alpha \polyRightInt_i(\bm{x})\) for
every \(i\).
By \eqref{eq:def-rho-i}, this level is \(\max_i\rho_i(\bm{x})\), which
equals \(\pBinPlus(\bm{x})\) by \eqref{eq:binplus-max-ratio}.
In this way,  duality served as a design principle, not only as a proof
technique.

\section*{LLM Usage}

In addition to language editing, we used ChatGPT, Codex, and GitHub Copilot
throughout this project to check derivations, explore generalizations and proof
strategies, assist with code and numerical experiments, and search for related
literature.
The authors originated the generalized binomial statistic~$\pBinPlus$, the
leave-one-out dual formulation, the combination principle, the proof
for $\nplr$, and the main computational ideas without LLMs.
AI generated much of an initial proof of the validity of~$\pBinPlus$ and initial
proofs of the combination principle and
Proposition~\ref{proposition:binplus-nplr-comparison}; the authors checked and
substantially reworked these arguments.
The authors developed the (relatively standard) results in
\Cref{sec:asymptotics}, except for the initial proof of
\Cref{proposition:binplus-nplr-comparison}.
The authors independently verified every mathematical claim, numerical result,
and citation, and take responsibility for the article.

\FloatBarrier

\bibliography{fullbib}

\appendix

\section{Existing statistics}\label{sec:existing-statistics}

Existing finite-sample procedures for mean inference separate into two
qualitatively different groups.
The first obtains finite-sample guarantees by imposing bounded support, variance
control, or tail conditions.
These methods are important context, but they are not direct competitors for our
model without truncation or additional assumptions.
The second group uses only nonnegativity, and therefore forms the direct comparison class for this paper.
Within this second group, we further distinguish between i.i.d. fixed-sample
procedures, leave-one-out or simultaneous-valid procedures, and sequential
betting or e-value procedures.

Note that fully distribution-free inference for a mean without any structural
restriction is subject to classical impossibility phenomena of
\citet{BahadurSa56}.
In our setting, nonnegativity is the structural restriction that makes
finite-sample inference possible.

\subsection{Methods requiring bounded support, variance control, or tail control}

The following procedures rely on assumptions beyond nonnegativity, such as a known finite upper bound, a variance parameter, or a tail condition.
Our model imposes only $X_i\ge 0$ and a mean constraint, and therefore allows unbounded support and infinite variance.
Consequently, these procedures are not direct finite-sample competitors for our model class, although they are useful context for what is possible under stronger assumptions.

\paragraph{Classical concentration inequalities.}
Classical one-sided confidence bounds based on \citet{Hoeffding63},
\citet{Bernstein24}, \citet{Bentkus04}, and empirical
Bernstein bounds such as \citet{MaurerPo09} require
assumptions beyond nonnegativity.
The relevant assumptions may be a known bounded range, a known variance proxy,
or a moment/tail condition strong enough to control the upper tail.

\paragraph{\citet{RomanoWo00}}
\citet{RomanoWo00} construct finite-sample conservative and asymptotically efficient confidence intervals for the mean of a distribution supported on $[0,1]$.
They define their exact interval $I_{n,1}$ through sampling quantiles optimized
over a Kolmogorov--Smirnov confidence band around the empirical distribution, and
their computable conservative enlargements $I_{n,2}$ and $I_{n,3}$ use
Berry--Esseen bounds and worst-case variance bounds over that band, which
essentially relies on compactness of support.

\paragraph{\citet{AusternMa22}}
\citet{AusternMa22} construct finite-sample confidence intervals for bounded
i.i.d. means by adding explicit corrections to Gaussian tail bounds and then
inverting them.
The corrections vanish asymptotically, so the intervals attain the Gaussian
optimal width.
When the variance is unknown, they optimize the known-variance bound over a
high-probability confidence interval for the variance.
Both constructions rely on bounded support.

\subsection{Methods for nonnegative, potentially unbounded observations}

The methods in this subsection only use nonnegativity.
They are directly comparable to the methods we develop in this paper, and we include several of them in our experiments.
The main differences between these methods is the null class under which they are valid: some are valid under i.i.d. fixed-sample assumptions, some under the leave-one-out conditional-mean null, and some under the sequential conditional-mean null.

\subsubsection{I.i.d. fixed-sample methods}

\paragraph{\citet{Kaplan87}}
\citet{Kaplan87} constructs a finite-sample one-sided test for the mean of an i.i.d. nonnegative population by stopping multiplicative martingales for every permutation of the observations and applying Markov's inequality.
A simple symmetric version yields the p-value
\[
  T_{\mathrm{Kap}}(x)
  = \min\left\{1,\, \min_{1\le j\le n}
      \binom{n}{j}\prod_{k=1}^j x_{(n-k+1)}\right\},
\]
where $x_{(1)}\le \cdots\le x_{(n)}$ are the order statistics.
The ESP statistic of \citet{MingShWa26} strictly dominates this statistic, so we
do not include it in the experiments.

\paragraph{\citet{Anderson69}}
\citet{Anderson69} constructs fixed-sample confidence bounds for the mean of an
i.i.d. distribution by integrating an extremal cdf within a uniform
Dvoretzky--Kiefer--Wolfowitz band; we use the sharp Massart~\citep{Massart90}
constant for this band.  In our setting, the lower confidence bound at
level $\alpha\le 1/2$ takes the form
\[
  \widehat\mu_{A,\alpha}(x)
  := \sum_{i=1}^n
  \left(\frac{n-i+1}{n}-\varepsilon_{n,\alpha}\right)_+
  \bigl(x_{(i)}-x_{(i-1)}\bigr),
  \qquad
  \varepsilon_{n,\alpha}:=\sqrt{\frac{\log(1/\alpha)}{2n}},
\]
with $x_{(0)}:=0$.
We find the corresponding p-value by bisecting over $\alpha$.

\paragraph{\citet{PhanThLe21}}
\citet{PhanThLe21} construct finite-sample confidence bounds for the mean based
on general ordering of the observations, which they take to be the
centered $\ell_2$ ordering in their experiments.
While they formulate their statistic for bounded distributions,
it naturally extends to the nonnegative setting.
However, in our experiments this test had near-zero power for several choices of
the center of the $\ell_2$ ordering.
In addition, it was very computationally expensive, so we do not include it in
the plots.

\subsubsection{Likelihood-ratio and symmetric-polynomial methods}

\paragraph{The nonparametric likelihood-ratio.}
Building on earlier nonparametric likelihood-ratio and empirical-likelihood
work \citep{ThomasGr75,Owen88,Owen90}, \citet{WangZh03} study $\nplr$ for the
present problem, derive its closed-form one-dimensional representation, and
prove its validity for $n=2$.
\citet{Gaffke05} compare $\nplr$ to related statistics and prove additional
partial validity results, including validity for two-point distributions.
The present paper proves finite-sample validity of $\nplr$ under the leave-one-out conditional-mean null $\Eb[X_i\mid \bm{X}_{-i}]\le 1$.
This makes $\nplr$ a central existing comparator, and we include it in all experiments.

\paragraph{\citet{MingShWa26}.}
In concurrent independent work, \citet{MingShWa26} prove that $\pESP:=\min_k(\binom{n}{k}/e_k(x))$
is a valid $p$-value under the same leave-one-out conditional-mean null
considered here, where \(e_k(x)\) is the \(k\)th elementary symmetric polynomial of \(x\).
In particular, this implies the validity of the \(\nplr\) statistic.
While \citet{MingShWa26} note that evaluating the ESP statistic requires $O(n^2)$
time due to the need to consider all orders $k$, we show in
Appendix~\ref{sec:esp} that it suffices to
check only two values of $k$, and using tilted Fourier transforms we can compute
the statistic in $O(n\log^2 n)$ time.
We include \(\pESP\) in our experiments, and find that its
performance is almost the same as that of \(\nplr\).

\subsubsection{Sequential betting and e-value methods}

Sequential betting methods are valid under the larger null class $\Eb[X_i\mid \bm{X}_{1:i-1}]\le 1$.
This makes them broadly valid and natural competitors, but also means that they
do not exploit the additional leave-one-out structure available
to $\nplr$, $\pBinPlus$, and $\min\{\nplr,\pBinPlus\}$.
For background on time-uniform confidence sequences and e-values, see
\citet{RamdasWa25}.

\citet{Waudby-SmithRa24} construct any-time valid confidence sequences of the
mean of bounded distributions by constructing test supermartingales.
In particular, if $M_t$ is a nonnegative supermartingale with $M_0=1$ under the
null, then by Ville's maximal inequality, \( \Pb\{\max_{t\ge0}M_t\ge 1/\alpha\}\le \alpha\).
Hence \(\{\max_{s \le t}M_s\ge 1/\alpha\}\) is a valid rejection region for the null at
level \(\alpha\) and \(\min\{1,1/\max_{s \le t}M_s\}\) is a valid \(p\)-value.

Many of the test supermartingales constructed in \citet{Waudby-SmithRa24} are
valid under our nonnegative setting, and we include three of them in our
experiments: the betting capital process (Betting), diversified Kelly betting
(dKelly), and the predictable plug-in empirical Bernstein (PrPlEB).

\paragraph{Betting capital process.}
In the present one-sided nonnegative setting, the betting capital process is
\[
  K_t:=\prod_{i=1}^t\{1+\lambda_i(X_i-1)\},
\]
where $\lambda_i\in[0,1]$ is predictable.
We include the recommended level-dependent betting statistic, denoted Betting$_\alpha$, using the tuning
\[
  \lambda_t=\widetilde\lambda_t\wedge \frac12,
  \qquad
  \widetilde\lambda_t=
  \sqrt{\frac{2\log(1/\alpha)}{\widehat\sigma_{t-1}^2\,t\log(t+1)}},
\]
with
\[
  \widehat\sigma_t^2
  =\frac{1+\sum_{i=1}^t(X_i-\widehat\mu_i)^2}{t+1},
  \qquad
  \widehat\mu_t=\frac{1+\sum_{i=1}^tX_i}{t+1}.
\]

\paragraph{Diversified Kelly betting}
For diversified Kelly betting, rather than averaging over finitely many
prespecified strategies, we use the continuous mixture
\[
  K_t:=\int_0^1\prod_{i=1}^t\{1+\lambda(X_i-1)\}\,d\lambda,
\]
where we efficiently evaluate the integral using the log-concave approximation
methods described in \Cref{sec:computing-binplus-integrals}.
Using a fixed grid of $\lambda$ values is undesirable for large $n$, because in regular local regimes the optimal $\lambda$ is typically of order $n^{-1/2}$.
For computational efficiency, the plotted dKelly statistic uses the final value
at $t=n$.

\paragraph{Predictable plug-in empirical Bernstein}
\citet[Theorem~2]{Waudby-SmithRa24} present their predictable plug-in empirical
Bernstein confidence sequence for $X_t\in[0,1]$, but its
lower-bound component extends to nonnegative, potentially unbounded
observations.
Under $H_0:\Eb[X_t\mid \bm{X}_{1:t-1}]\le 1$, let $q_{t-1}\in[0,1]$ and $\lambda_t\in[0,1)$ be predictable and set
\[
  \psi_E(\lambda):=-\log(1-\lambda)-\lambda,
\]
\[
  M_t^{\rm PrPl\text{-}EB}
  =\prod_{i=1}^t
  \exp\left\{\lambda_i(X_i-1)-4(X_i-q_{i-1})^2\psi_E(\lambda_i)\right\}.
\]
Then $(M_t^{\rm PrPl\text{-}EB})_{t\ge 0}$ is a test supermartingale under the null.
We use the predictable choices
\[
  q_t=\min\left\{1,\frac{1+\sum_{j=1}^tX_j}{t+1}\right\},
  \qquad
  \widehat\sigma_t^2=\frac{1+\sum_{j=1}^t(X_j-q_j)^2}{t+1},
\]
with $q_0=1$, $\widehat\sigma_0^2=1$, and
\[
  \lambda_t=\min\left\{\frac12,
  \sqrt{\frac{2\log(1/\alpha)}{\widehat\sigma_{t-1}^2\,t\log(1+t)}}\right\}.
\]
Clipping $q_t$ at one is necessary in the unbounded setting to ensure $X_t-q_{t-1}\ge -1$.
The construction follows from the lower-increment empirical-Bernstein inequality
used in \citet[Lemma~1]{Waudby-SmithWuRaKaMi24} and \citet[proof of
Lemma~4.1]{FanGrLi15}.

\section{Additional Experiments}\label{sec:additional-experiments}

We show detailed results in \Cref{fig:level-0-1,fig:level-0-05,fig:level-0-01}.
These additional experiments support the claims in the main text.

\foreach \alphav/\alphat in {0-1/0.1, 0-05/0.05, 0-01/0.01} {
  \begin{figure}[htb]
    \centering
            \includegraphics[width=\columnwidth]{plots/power_grid_alpha-\alphav.pdf}
    \caption{
      Extended power plots of various statistics at $\alpha=\alphat$.
      Different rows correspond to different $\kappa$, and different
      columns correspond to different distributions.
    }
    \label{fig:level-\alphav}
  \end{figure}
}

\section{Computation}
\label{sec:computation}

In this section we describe efficient algorithms for computing the various
statistics.

\subsection{Computing $\nplr$}

The nonparametric likelihood ratio $\nplr$ in
definition~\eqref{eqn:nplr} requires maximizing the product
$\prodPoly_{\bx}(t) = \prod_{i=1}^n (1 + t(x_i - 1))$ over $t \in [0, 1]$,
which is trivial by bisection, as $\log \prodPoly_{\bx}(t)$ is concave in $t$.

\subsection{Computing \(\pBinPlus\)}
\label{sec:computing-binplus-integrals}

We first recall the definition~\eqref{eq:def-Pvee} of \( \pBinPlus \).
For \(\bm{x}\in\Rb_+^n\) let \(k\in\arg\max_i x_i\), and define
\(\pBinPlus(\bm{x})=1\) when \(x_k\le 1\) and
\[
  \pBinPlus(\bm{x})
  =
  \frac{\int_{-1/(x_k-1)}^{0} \prodPoly_{\bm{x}_{-k}}(t)\,dt}
       {\int_{-1/(x_k-1)}^{1} \prodPoly_{\bm{x}_{-k}}(t)\,dt}
\]
otherwise.
This requires evaluating two polynomial integrals.

We mention four approaches in passing, which we do not use, before
describing the procedure.
Naively expanding the integrand $\prodPoly_{\bx}$ in the monomial basis and
integrating term by term is numerically unstable even for \(n\) in the
hundreds: while the integrand is nonnegative on its domain, the monomial
coefficients can have large alternating signs.
Gauss-Legendre quadrature with $q = \ceil{n/2}$ nodes is exact as the
integrands $\prodPoly_{\bx_{-k}}(t)$ have degree at most $n - 1$, is
numerically stable, and requires time $O(n^2)$; recursive Bernstein basis
expansions also yield a stable $O(n^2)$ procedure~\citep{Lorentz86}.
An  FFT-based  polynomial  multipication  scheme~\citep{CormenLeRiSt01}  can
reduce complexity to $\widetilde{O}(n)$, but in our experiments this becomes
numerically unstable around $n = 100$ because of severe cancellations.

Instead, we use an adaptive-rejection-sampling style envelope construction
\citep{GilksWi92} to approximate the integral.
The integrand $\prodPoly_{\bm{x}_{-k}}(t)$ is log-concave on the interval
$[-1/(x_k - 1), 1]$, so that $h(t) = \log \prodPoly_{\bx_{-k}}(t)$ is
concave; its secants thus provide lower bounds and its first-order
approximations provide upper bounds.
As both bounds are piecewise affine, we can compute integrals of their
exponentials in closed form.
To compute either integral, we partition the domain into intervals,
compute the secant-based lower integral and tangent-based upper integral
(with secants and tangents computed in each interval), yielding
global lower and upper integral bounds.
We bisect the interval contributing the largest gap until the global upper
and lower bounds are within a specified relative tolerance; we maintain a
max-heap~\citep{CormenLeRiSt01} to make each interval selection logarithmic
in the number of active cells.

We provide no formal complexity guarantees for this procedure, noting simply
that with \(N_{\mathrm{bis}}\) bisections, evaluating the bounds on the
intervals takes \(O(nN_{\mathrm{bis}})\), because each split requires only a
constant number of evaluations of \(h\) and \(h'\), and each requires a
single pass through the \(n-1\) factors.
The maximum heap operations add \(O(N_{\mathrm{bis}}\log N_{\mathrm{bis}})\),
giving total
runtime
\(O\left(nN_{\mathrm{bis}}+N_{\mathrm{bis}}\log N_{\mathrm{bis}}\right).\)
In our experiments, we observe that \(N_{\mathrm{bis}}<1000\) at relative
tolerance \(10^{-5}\), even for \(n\) up to \(10^5\).
Thus, for fixed accuracy, the adaptive routine can be substantially faster than
the exact \(O(n^2)\) methods for large \(n\).

\subsection{Computing \(\pESP\)}\label{sec:esp}

We discuss efficiently and stably computing \citeauthor{MingShWa26}'s
normalized elementary symmetric polynomial $\pESP$ statistic
[\citeyear{MingShWa26}].
Let \(e_k\) and \(A_k\) be the elementary symmetric polynomials and their
normalized versions: for \( \bm{x} \in \Rb^n \),
\[
e_k(\bm{x})=\sum_{\substack{S\subseteq[n]\\ |S|=k}}\prod_{i\in S}x_i, \qquad
A_k(\bm{x})=\frac{e_k(\bm{x})}{\binom nk}, \qquad A_0(\bm{x})=1.
\]
Then, for \( \bm{x} \in \Rb_{+}^n \),
\begin{equation}
  \label{eqn:esp-stat}
  \pESP (\bm{x}) \defeq \left(\max_{0\le k\le n} A_k(\bm{x})\right)^{-1}.
\end{equation}
\citet[Sec.~6]{MingShWa26} use an FFT-based product tree to compute each
\(e_k\) in \(O(n\log^2 n)\) arithmetic operations.
In our experiments, however, the floating-point implementation of this
procedure became numerically unreliable for \(n\) in the hundreds when the
coefficients spanned many orders of magnitude.
We therefore develop an alternative method that considers only two indices
to compute the maximum in~\eqref{eqn:esp-stat}, and then use FFT-based convolution
after exponential tilting to compute the corresponding \(A_k\) values.

\subsubsection{Finding the maximizing index of $A_k$}

Expanding \( \prodPoly_{\bm{x}}(t)\) in the
Bernstein basis and using the definition of $A_k$ yields
\[
\prodPoly_{\bm{x}}(t)
= \prod_{i=1}^n (1 + t(x_i - 1))
= \sum_{k=0}^n \binom nk  t^k(1-t)^{n-k} A_k(\bm{x}),
\]
so that for $K \sim \binomial(n,t)$, we obtain \(\prodPoly_{\bm{x}}(t) =
\Eb[A_K(\bm{x})].\)
That $K$ is typically near its mean $nt$ gives the heuristic approximation
\[
  \prodPoly_{\bm{x}}(t) \approx A_{ nt }(\bm{x})
\]
(where we assume $nt$ is integer), so if \(t^\star\) maximizes
\(\prodPoly_{\bm{x}}(t)\), we might hope that either \(k=\lfloor
nt^\star\rfloor\) or \(k=\lceil nt^\star\rceil\) maximizes \(A_k(\bm{x})\).
This indeed holds:
\begin{proposition}
  \label{proposition:AK}
  Let $\bm{x} \in \Rb_+^n$ and
  $t^\star\in\argmax_{0\le t\le1}\prodPoly_{\bm{x}}(t)$.
  Then
  \begin{equation*}
    \max_{0\le k\le n}A_k(\bm{x})
    =
    \max\left\{
    A_{\lfloor nt^\star\rfloor}(\bm{x}),
    A_{\lceil nt^\star\rceil}(\bm{x})
    \right\}.
  \end{equation*}
\end{proposition}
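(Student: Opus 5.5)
The plan combines two structural facts about $A_k(\bm{x})$: the sequence $k\mapsto A_k(\bm{x})$ is log-concave, and the maximizer $t^\star$ of $\prodPoly_{\bm{x}}$ pins down a mean equal to $nt^\star$. Together these should force the mode of $(A_k)$ into $[nt^\star-1,nt^\star+1]$, and then into $\{\lfloor nt^\star\rfloor,\lceil nt^\star\rceil\}$.

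\emph{Step 1 (unimodality).} Newton's inequalities for nonnegative $\bm{x}$ give $A_k^2\ge A_{k-1}A_{k+1}$. Moreover $A_k>0$ exactly for $k\le p$, where $p$ is the number of positive coordinates. Hence $(A_k)_{k=0}^{n}$ is log-concave with no internal zeros, so it is unimodal. It then suffices to show two implications: $A_m\ge A_{m-1}$ implies $m\le\lceil nt^\star\rceil$, and $A_m\ge A_{m+1}$ implies $m\ge\lfloor nt^\star\rfloor$. Ties at plateaus are harmless because we only need the maximum value.

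\emph{Step 2 (mean identity).} Use the Bernstein expansion $\prodPoly_{\bm{x}}(t)=\sum_k\binom nk t^k(1-t)^{n-k}A_k$. Define weights $w_k:=\binom nk t^{\star k}(1-t^\star)^{n-k}A_k/\prodPoly_{\bm{x}}(t^\star)$. These weights are the law of a Poisson-binomial sum $K=\sum_i B_i$ with independent $B_i\sim\mathrm{Bernoulli}\bigl(t^\star x_i/(1-t^\star+t^\star x_i)\bigr)$.
\begin{itemize}
\item If $t^\star\in(0,1)$, stationarity gives $\sum_i(x_i-1)/(1+t^\star(x_i-1))=0$. Combined with $\sum_i(1-t^\star+t^\star x_i)/(1+t^\star(x_i-1))=n$, this yields $\sum_i x_i/(1+t^\star(x_i-1))=n$, i.e.\ $\Eb K=nt^\star$.
\item If $t^\star\in\{0,1\}$, we argue directly. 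For $t^\star=0$, $\prodPoly_{\bm{x}}'(0)=n(A_1-1)\le0$ gives $A_1\le A_0$, so unimodality puts the maximum at $k=0$. For $t^\star=1$, $\prodPoly_{\bm{x}}'(1)\ge0$ gives $A_n\ge A_{n-1}$ (read off via the reversed Bernstein expansion), so the maximum is at $k=n$.
\end{itemize}

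\emph{Step 3 (locating the mode; main obstacle).} Let $b_k$ be the $\mathrm{Bin}(n,t^\star)$ pmf. Then $w_k/b_k=A_k/\prodPoly_{\bm{x}}(t^\star)$, so the ratio $w/b$ is log-concave, and $w$ and $b$ have equal total mass and equal mean $nt^\star$.

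\begin{itemize}
\item \emph{Sign pattern.} Equal mass and equal mean force $w-b$ to change sign at least twice. Log-concavity of the ratio then forces the pattern $(-,+,-)$: the set $\{k: A_k\ge \prodPoly_{\bm{x}}(t^\star)\}$ is an interval $[\ell,u]$ on which $w\ge b$.
\item \emph{Reduction.} I would then show $\ell\le\lceil nt^\star\rceil$ and $u\ge\lfloor nt^\star\rfloor$. A one-sided excess of $w$ over $b$ outside a window around the mean would contradict $\sum_k k(w_k-b_k)=0$; here I use that $b$ itself has its mode in $\{\lfloor nt^\star\rfloor,\lceil nt^\star\rceil\}$ (up to the usual $\pm1$ for binomials). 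The mode of $A$ lies in $[\ell,u]$, but this alone does not pin it down.
\item \emph{Direct attack, tried first.} To sharpen, I would compare the unimodal ratio $A_k$ directly against $b_k$ via a Darroch-type argument: the mode of a Poisson-binomial sum lies within $1$ of its mean. Alternatively, write $\prodPoly_{\bm{x}}'(s)=n\,\Eb_{\mathrm{Bin}(n-1,s)}[A_{K+1}-A_K]$. The aim is to show $\prodPoly_{\bm{x}}'((m-1)/n)\ge0$ whenever $A_m\ge A_{m-1}$. By log-concavity of $\prodPoly_{\bm{x}}$ in $t$, this gives $t^\star\ge(m-1)/n$, i.e.\ $m\le\lceil nt^\star\rceil$ after handling the integer equality case.
\item \emph{Fallback.} If that does not go through, I would use the ratio form $r_k=A_{k+1}/A_k$, which is nonincreasing, and bound $\prodPoly_{\bm{x}}'(s)$ via $r$ at $s=(m-1)/n$.
\end{itemize}

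This step, turning "equal means plus log-concave ratio" into a mode window of width exactly one, is where I expect the real difficulty to lie. The symmetric implication follows by applying the same argument to $\bm{x}$ with $t\mapsto 1-t$, which swaps the roles of $A_k$ and $A_{n-k}$ via the reversed expansion.
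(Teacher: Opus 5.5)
\textbf{There is a genuine gap at Step~3.} Your Step~2 is correct: both the mean identity $\sum_i q_i(t^\star)=nt^\star$ and the endpoint cases $t^\star\in\{0,1\}$ hold. But Step~3 carries the whole proposition and is only stated as an aim. The tools you propose for it cannot close it. Log-concavity of $k\mapsto A_k$, equal mass and mean of $w$ and $b$, and monotonicity of $r_k=A_{k+1}/A_k$ hold for \emph{every} log-concave sequence, and for such sequences the conclusion is false.

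Take $n=100$, with $A_k=(1+\epsilon)^k$ for $k\le 65$ and $A_k=0$ for $k>65$. This sequence is log-concave with no internal zeros. By continuity, some small $\epsilon>0$ (roughly $6\times10^{-4}$) makes $w_k\propto\binom{n}{k}2^{-n}A_k$ have mean exactly $50$. Then $t=\tfrac12$ is a critical point of $t\mapsto\sum_k\binom nk t^k(1-t)^{n-k}A_k$ and every hypothesis of your Step~3 holds, yet the mode of $A$ is $65$. In the same example $A_{65}>A_{64}$, while $\sum_{j}\binom{99}{j}s^j(1-s)^{99-j}(A_{j+1}-A_j)<0$ at $s=64/100$. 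So your target ``$A_m\ge A_{m-1}\Rightarrow\prodPoly_{\bm{x}}'((m-1)/n)\ge0$'' cannot follow from the ratios $r_k$ alone, which rules out your fallback.

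A Darroch-type mode bound is also too weak. It gives at best $f(k+1)\le f(k)$ for $k\ge\mu$, whereas, as explained below, one needs $(k+1)f(k+1)\le kf(k)$.

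Separately, your Step~1 reduction is misstated. With non-strict hypotheses it fails on plateaus and zero tails. For example, $\bm{x}=(0,0,5)$ has $t^\star=1/6$ and $A_3=A_2=0$, so $A_3\ge A_2$ holds although $3>\lceil nt^\star\rceil=1$. The hypotheses must be strict.

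\textbf{The missing ingredient} is an inequality that uses the Poisson-binomial structure rather than mere log-concavity. The paper tilts at $\hat t=k/n$ instead of at $t^\star$; this is exactly the evaluation point you guessed. At $\hat t$ the binomial factors cancel, and $(A_{k+1}-A_k)/\prodPoly_{\bm{x}}(\hat t)$ is a positive multiple of $\frac{k+1}{k}f_{\hat t}(k+1)-f_{\hat t}(k)$. Here $f_{\hat t}$ is the pmf of the Poisson-binomial law with success probabilities $q_i(\hat t)=\hat t x_i/(1+\hat t(x_i-1))$.

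The D\"umbgen--Wellner inequality $(j+1)f(j+1)\le\mu f(j)$ for $j\ge\mu$ then gives $A_{k+1}>A_k\Rightarrow k<\mu_{\hat t}=\sum_i q_i(\hat t)$. Equivalently $(\log\prodPoly_{\bm{x}})'(\hat t)>0$, hence $k<nt^\star$ by concavity of $\log\prodPoly_{\bm{x}}$. Applying the same argument to $n-K$ gives $A_{k+1}<A_k\Rightarrow k+1>nt^\star$. These two implications place the maximum at $\lfloor nt^\star\rfloor$ or $\lceil nt^\star\rceil$ directly, so neither Newton's inequalities nor unimodality is needed.
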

\begin{proof}
  We show the monotonicity results that
  \begin{subequations}
    \label{eqn:index-implications}
    \begin{align}
      \label{eqn:index-implication-a}
      k \ge n t^\star & ~~ \mbox{implies}~~
      A_{k+1}(\bx) \le A_k(\bx) \\
      k + 1 \le n t^\star
      & ~~\mbox{implies~~} A_{k}(\bx) \le A_{k + 1}(\bx)
      \label{eqn:index-implication-b}
    \end{align}
  \end{subequations}
  for $0 \le k \le n$ (where we ignore the out of bounds indices); this
  monotonicity clearly implies the result.
  To develop the inequality, we relate indices to $t\opt$ by
  taking derivatives of $\log \prodPoly_{\bx}(t)$,
  which for $q_i(t) \defeq \frac{t x_i}{1 + t (x_i - 1)} \in [0, 1]$ satisfy
  \begin{align}
    \frac{\partial}{\partial t} \log \prodPoly_{\bx}(t)
    & = \sum_{i = 1}^n \frac{x_i - 1}{1 + t(x_i - 1)}
    = \sum_{i = 1}^n \left(\frac{q_i(t)}{t}
    - \frac{1 - q_i(t)}{1 - t}\right)
    = \frac{1}{t(1 - t)}
    \sum_{i = 1}^n \left(q_i(t) - t \right).
    \label{eqn:derivative-log-E-mean}
  \end{align}
  Immediately, we see that if $\sum_{i = 1}^n q_i(t) > nt$, then
  concavity implies $t\opt > t$, while if
  $\sum_{i = 1}^n q_i(t) < nt$, then $t\opt < t$.

  To relate this to the polynomials, consider
  the random variable $K_t$ defined to be the number of successes
  in $n$ independent Bernoulli trials with success probabilities
  $q_i(t)$ (a Poisson-Binomial random variable), which has probability mass function
  \begin{align*}
    f_t(k) & = \sum_{\substack{S \subset [n] \\ |S| = k}}
    \prod_{i \in S} q_i(t) \prod_{i \not \in S} (1 - q_i(t))
    = \sum_{\substack{S \subset [n] \\ |S| = k}}
    \prod_{i \in S} (t x_i) \prod_{i \not \in S} (1 - t)
    \cdot \prod_{i = 1}^n \frac{1}{1 + t (x_i - 1)} \\
    & = \frac{1}{\prodPoly_{\bx}(t)} t^k (1 - t)^{n - k}
    \sum_{\substack{S \subset [n] \\ |S| = k}} \prod_{i \in S} x_i
    = \frac{1}{\prodPoly_{\bx}(t)} t^k (1 - t)^{n-k} e_k(\bx),
  \end{align*}
  where we recall the $k$th elementary symmetric polynomial.
  Normalizing, we equivalently write
  \begin{align*}
    f_t(k) = \frac{\binom{n}{k}}{\prodPoly_{\bx}(t)} t^k(1 - t)^{n-k}
    A_k(\bx)
    ~~ \mbox{or} ~~
    \frac{A_k(\bx)}{\prodPoly_{\bx}(t)}
    = \frac{f_t(k)}{\binom{n}{k} t^k (1 - t)^{n-k}}.
  \end{align*}

  To represent the difference \( A_{k + 1}(\bx) - A_k(\bx)\) in terms of these
  probabilities, for $t \in (0, 1)$, we have
  \begin{align*}
    \frac{A_{k+1}(\bx) - A_k(\bx)}{\prodPoly_{\bx}(t)}
    & = \frac{1 - t}{t} \frac{f_t(k + 1)}{\binom{n}{k + 1}}
    t^{-k} (1 - t)^{k - n}
    - \frac{f_t(k)}{\binom{n}{k}} t^{-k} (1 - t)^{k - n} \\
    & = \frac{1}{\binom{n}{k} t^k(1 - t)^{n-k}}
    \cdot \left[\frac{1 - t}{t} 
      \frac{k + 1}{n - k} f_t(k + 1) - f_t(k) \right]
  \end{align*}
  because $\binom{n}{k+1} = \binom{n}{k} \frac{n-k}{k + 1}$.
  To cancel factors, take $\hat{t} = \frac{k}{n}$, whence
  $(1 - \hat{t}) / \hat{t} = \frac{n - k}{k}$, so
  \begin{align}
    \label{eq:esp-adjacent-ft}
    \frac{A_{k+1}(\bx) - A_k(\bx)}{\prodPoly_{\bx}(\hat{t})}
    = \frac{1}{\binom{n}{k} \hat{t}^k (1 - \hat{t})^{n-k}}
    \cdot \left[\frac{k + 1}{k} f_{\hat{t}}(k + 1) - f_{\hat{t}}(k)\right].
  \end{align}
  
  Now we leverage a result on the monotonicity properties of the probability
  mass functions of Poisson-Binomial random variables that
  \citet{DumbgenWe20} develop:
  \begin{lemma}
    \label{lem:dw-score}
    Let $K$ be a Poisson-binomial random variable with probability mass
    function $f$ and mean $\mu$.
    If $j \ge \mu$, then $(j+1)f(j+1)\le\mu f(j) \le j f(j)$.
  \end{lemma}

  \begin{proof}
    The second inequality is obvious, and we prove the first.
    When every Bernoulli success probability is strictly less than one, this
    follows from \citet[Equation~(7)]{DumbgenWe20}.
    If a success probability of $K$ equals one, the result follows by
    approximation.
  \end{proof}

  The expansion~\eqref{eq:esp-adjacent-ft} shows that $A_{k+1}(\bm{x})
  > A_k(\bm{x})$ if and only if $(k+1) f_{\hat{t}} (k+1) > k
  f_{\hat{t}}(k)$.
  Hence, by the contrapositive of \Cref{lem:dw-score},
  if $A_{k+1}(\bx) > A_k(\bx)$ then 
  \[
    k < \mu_{\hat{t}} := \Eb K_{\hat{t}} = \sum_{i = 1}^n q_i(\hat{t})
    .
  \]
  By the discussion following equality~\eqref{eqn:derivative-log-E-mean}, we see
  that $\mu_{\hat{t}} = \sum_{i = 1}^n q_i(\hat{t}) > k = n \hat{t}$
  implies $\hat{t} < t\opt$.
  To sum up, we have shown that if $A_{k+1}(\bx) > A_k(\bx)$,
  then \(\hat{t} < t\opt\), or equivalently, $k < n t\opt$.
  Taking contrapositives, we see that
  \begin{align*}
    k \ge n t\opt ~~ \mbox{implies} ~~
    A_k(\bx) \ge A_{k+1}(\bx)
    ~~ \mbox{for~} 1 \le k \le n-1.
  \end{align*}

  To argue the converse direction, apply the preceding argument to the
  reflected variable $\wt{K}_t = n - K_t$, which is a Poisson-binomial
  random variable with probabilities $\wt{q}_i(t) = 1 - q_i(t)$ and mean
  $\Eb \wt{K}_t = n - \mu_t$.
  Then repeating the same argument, \emph{mutatis mutandis}
  (replace $k$ by $n - k - 1$ and $\hat{t} = \frac{k + 1}{n}$),
  we obtain $A_{k + 1}(\bx) < A_k(\bx)$ implies $k + 1 > n t\opt$,
  whose contrapositive is that
  $k + 1 \le n t\opt$ implies $A_{k+1}(\bx) \ge A_k(\bx)$.
  
  There remain two cases to prove the
  inequalities~\eqref{eqn:index-implications}: $k=0$ in the
  implication~\eqref{eqn:index-implication-a} and $k=n-1$
  in~\eqref{eqn:index-implication-b}.
  These can occur only when $t^\star=0$ and $t^\star=1$,
  respectively.
  By concavity,
  if \(t^\star=0\), then
  \[
  0 \ge \prodPoly_{\bm{x}}'(0) = n(A_1(\bm{x})-A_0(\bm{x})),
  \]
  while if \(t^\star=1\), then
  \[
  0 \le \prodPoly_{\bm{x}}'(1) = n(A_n (\bm{x})-A_{n-1}(\bm{x})).
  \]
  The inequalities~\eqref{eqn:index-implications} follow.
\end{proof}

\subsubsection{Computation by exponentially tilted Fourier transforms}

Proposition~\ref{proposition:AK} reduces the discrete optimization over
$n+1$ values to at most two coefficient evaluations.
We compute the relevant coefficients using the exponentially shifted
FFT-convolution method of \citet{PeresLeKe21}, i.e., to obtain $A_k(\bm{x})$, we
use FFT convolution to compute the coefficients of $z^k$ in the polynomial
\begin{align*}
  \prod_{i=1}^n(1 + r x_i z)
  =\prod_{i=1}^n(1 + r x_i)
  \cdot \prod_{i=1}^n (1 - q_i + q_i z)
  = \sum_{j=0}^n \binom{n}{j} A_j(\bm{x}) r^j z^j,
\end{align*}
where
\[
  q_i=\frac{r x_i}{1+r x_i},
  \qquad i=1,\ldots,n.
\]
For numerical stability, \citet{PeresLeKe21} choose \(r\)
satisfying \(\sum_i q_i=k\) when computing the coefficient of $z^k$.
When $t^{\star} \in (0, 1)$, we take \(r=t^\star/(1-t^\star)\) for simplicity, for
which \(\sum_i q_i=nt^\star\) is approximately equal to the index of the maximizing
coefficient.
For \(t^\star\in\{0,1\}\), we compute the corresponding coefficient directly.
Since $\prodPoly_{\bm{x}}$ is log-concave, we can find $t^\star$ by bisection, and the overall
run time is $O(n\log^2 n)$.
This procedure was numerically stable in our experiments.

\section{Proofs and additional results for Section~\ref{sec:asymptotics}}

\subsection{Proof of Lemma~\ref{lem:hellinger-projection}}
\label{sec:proof-hellinger-projection}

We begin with an auxiliary lemma on the convexity properties of the affinity
functional $A_Q(t) = \Eb_Q[\frac{1}{1 + t(X - 1)}]$.

\begin{lemma}
  \label{lemma:properties-affinity}
  Let \(Q\) be a probability law on \(\Rb_+\). Then
  \begin{enumerate}[label=(\roman*),leftmargin=*]
  \item \label{item:inverse-moment-endpoint-limit}
    \(A_Q(0)=1\), \(0<A_Q(t)<\infty\) for \(0\le t<1\), and
    \begin{align*}
      \lim_{t\uparrow1}A_Q(t)=A_Q(1)
    \end{align*}
    in the extended real line.
    Consequently, \(A_Q\) attains its minimum on \([0,1]\)
    and \(A_Q^\star:=\min_{0\le t\le1}A_Q(t)\) satisfies \(0<A_Q^\star\le1\).
  \item \label{item:inverse-moment-derivative}
    The function \(A_Q\) is differentiable on \((0,1)\) and has an
    extended right derivative at zero.
    Using \(A_Q'(t)\) to denote the right derivative,
    for \(0 \le t < 1\)
    \begin{align*}
      A_Q'(t)
      =
      -\Eb_Q\!\left[
        \frac{X-1}{\{1+t(X-1)\}^2}
        \right].
    \end{align*}
  \end{enumerate}
\end{lemma}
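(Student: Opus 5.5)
The plan is to work pointwise with the integrand $\psi_t(x) := (1+t(x-1))^{-1}$ and pass to expectations by monotone or dominated convergence.

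For part~\ref{item:inverse-moment-endpoint-limit}, first note that $\psi_0\equiv 1$, which gives $A_Q(0)=1$. For $0\le t<1$ and $x\ge0$ we have $1+t(x-1)\ge 1-t>0$. Hence $0<\psi_t(x)\le (1-t)^{-1}$, so $0<A_Q(t)<\infty$.

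For the endpoint limit, the key observation is that for each fixed $x\ge0$ the map $t\mapsto\psi_t(x)$ is monotone on $[0,1]$: it is nondecreasing if $x<1$ and nonincreasing if $x\ge1$. I will therefore split
\[
A_Q(t)=\Eb_Q[\psi_t(X)\one\{X<1\}]+\Eb_Q[\psi_t(X)\one\{X\ge1\}].
\]
On $\{X<1\}$ the integrand increases to $\psi_1(X)=X^{-1}$ (with value $+\infty$ at $X=0$) as $t\uparrow1$. Monotone convergence then gives convergence to $\Eb_Q[X^{-1}\one\{X<1\}]$ in the extended reals. On $\{X\ge1\}$ the integrand lies in $[0,1]$, so dominated convergence applies. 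Summing the two pieces yields $\lim_{t\uparrow1}A_Q(t)=A_Q(1)$.

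Attainment of the minimum then follows. $A_Q$ is continuous on $[0,1)$ (dominated convergence on any $[0,1-\epsilon]$, using the bound $(1-\epsilon)^{-1}$) and continuous at $1$ in the extended sense. It is also finite somewhere, since $A_Q(0)=1$. So $A_Q$ is a lower semicontinuous function into $(0,\infty]$ on the compact set $[0,1]$, and it attains its minimum. The resulting value satisfies $A_Q^\star\le A_Q(0)=1$. For positivity, $A_Q^\star>0$ is clear if the minimizer is in $[0,1)$. At $t=1$ we have $A_Q(1)=\Eb_Q[X^{-1}]>0$, since $X^{-1}>0$ everywhere (and equals $+\infty$ at $0$).

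For part~\ref{item:inverse-moment-derivative}, I differentiate under the integral. Pointwise,
\[
\partial_t\psi_t(x)=-(x-1)(1+t(x-1))^{-2}.
\]
Fix $t_0\in(0,1)$ and a neighborhood $[t_0-\delta,t_0+\delta]\subset(0,1)$. To get a uniform bound on the derivative, write $(x-1)/(1+s(x-1))^2$ as $u/(1+su)^2$ with $u=x-1\ge-1$. For $u\ge0$ this is at most $1/(4s)$, since $u/(1+su)^2$ is maximized at $u=1/s$. For $u\in[-1,0)$ its absolute value is at most $(1-s)^{-2}$. Both bounds are uniformly bounded for $s$ in the neighborhood. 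The mean value theorem plus dominated convergence therefore give differentiability on $(0,1)$ with the stated formula.

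At $t=0$ the $u\ge0$ bound blows up, which I expect to be the main obstacle, since $\Eb_Q X$ may be infinite. The plan here is again to split on the sign of $X-1$. The difference quotient is
\[
\frac{\psi_t(x)-1}{t}=-\frac{x-1}{1+t(x-1)}.
\]
On $\{X<1\}$ this is bounded by $(1-t)^{-1}$, so dominated convergence gives the limit $-\Eb_Q[(X-1)\one\{X<1\}]$. On $\{X\ge1\}$ the quantity $(x-1)/(1+t(x-1))$ increases to $x-1$ as $t\downarrow0$, so monotone convergence gives the limit $-\Eb_Q[(X-1)\one\{X\ge1\}]\in[-\infty,0]$. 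Adding the two pieces gives the extended right derivative $-\Eb_Q[X-1]$, which matches the formula at $t=0$. The sum is well defined because the negative part is finite.

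Finally, at interior points the right derivative coincides with the derivative, so the formula holds for all $0\le t<1$.
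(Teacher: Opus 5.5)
Your proof is correct, but it takes a more elementary route than the paper's.

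The paper argues convex-analytically. Each map $t\mapsto(1+t(x-1))^{-1}$ is closed convex on $[0,1]$, so $A_Q$ is closed convex: convexity is preserved under expectation, and Fatou gives lower semicontinuity. From there:
\begin{itemize}
\item The endpoint limit $\lim_{t\uparrow1}A_Q(t)=A_Q(1)$ follows at once from continuity of closed convex functions on $\Rb$.
\item Differentiation under $\Eb_Q$ on $(0,1)$ is quoted from a standard result for convex integrands.
\item At $t=0$, the criterion of increasing slopes shows that the difference quotient $(1-x)/(1+t(x-1))$ decreases to $1-x$ for every $x$ and is bounded above by $2$ when $t\le 1/2$. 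A single monotone convergence step then gives $A_Q'(0)=1-\Eb_Q X$.
\end{itemize}

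Your version replaces each of these with hands-on measure theory:
\begin{itemize}
\item Instead of convexity, you use the pointwise monotonicity of $t\mapsto\psi_t(x)$ and split at $x=1$: monotone convergence on $\{X<1\}$ and domination by $1$ on $\{X\ge1\}$.
\item Instead of the cited interchange theorem, you bound the derivative explicitly by $1/(4s)$ and $(1-s)^{-2}$ and apply the mean value theorem with dominated convergence.
\item At $t=0$, you use the same sign split rather than one monotone limit over all $x$. The underlying idea matches the paper's here.
\end{itemize}

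What each buys: your argument is self-contained and verifies every interchange of limit and expectation directly. The paper's is shorter and also yields closed convexity of $A_Q$ as a by-product. The paper uses that property later, for instance to localize the minimizer in Proposition~\ref{proposition:heavy-tail-np-local} and to invoke Lemma~\ref{lem:concave-local-maximization} in Proposition~\ref{proposition:regularly-varying-np-local}. Convexity is not part of the statement you were asked to prove, though, so your proof is complete as it stands.
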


\begin{proof}
  For \(x\ge0\), define the closed convex functions
  $\phi_x(t) = \frac{1}{1 + t(x - 1)}$,
  $t \in [0, 1]$.
  Then $A_Q(t)=\Eb_Q\phi_X(t)$
  is a proper lower-semicontinuous convex function on \([0,1]\); see
  \citet[Lemma~2.1]{Bertsekas73} for preservation of convexity under
  expectation, while lower semicontinuity follows from Fatou's lemma.
  Moreover,
  \begin{equation*}
  A_Q(0)=1,
  \qquad
  0<A_Q(t)\le\frac{1}{1-t}<\infty
  \quad (0\le t<1).
  \end{equation*}

  Continuity of closed convex
  functions on $\Rb$~\citep[Ch.~I.3.2]{HiriartUrrutyLe93} gives
  \begin{equation*}
  \lim_{t\uparrow1}A_Q(t)=A_Q(1)
  \end{equation*}
  in the extended real line.
  Since \(A_Q\) is lower semicontinuous on \([0,1]\)
  it attains its infimum, and \(A_Q(0)=1\) together with \(\phi_X(t)>0\) for
  every \(t\) implies \(0<A_Q^\star\le1\).

  For $0 < t < 1$,
  \citet[Proposition~2.1]{Bertsekas73} implies that
  \begin{equation*}
  A_Q'(t)
  =
  \Eb_Q\!\left[\phi_X'(t)\right]
  =
  -\Eb_Q\!\left[
    \frac{X-1}{\{1+t(X-1)\}^2}
    \right].
  \end{equation*}
  To compute the right derivative $A_Q'(0)$,
  observe that
  $A_Q'(0) = \lim_{t \downarrow 0}
  \frac{1}{t} (A_Q(t) - A_Q(0))$~\citep[Prop.~I.4.1.3]{HiriartUrrutyLe93}.
  The criterion of
  increasing slopes~\citep[Prop.~I.1.1.4]{HiriartUrrutyLe93}
  implies $t \mapsto \frac{1}{t}(\phi_x(t) - \phi_x(0))$ is
  monotonically non-decreasing in $t$, so
  \begin{align*}
    \frac{1}{t} (\phi_x(t) - \phi_x(0))
    = \frac{1 - x}{1 + t(x - 1)}
    \downarrow 1 - x
    ~~ \mbox{as} ~~ t \downarrow 0.
  \end{align*}
  Because
  \begin{align*}
    \frac{1}{t} (\phi_x(t) - \phi_x(0)) - 2
    = \frac{1 - x - 2(1 + t(x - 1))}{1 + t(x - 1)}
    = \frac{2 t - 1 - (2 t + 1) x}{1 + t(x - 1)} \le 0
  \end{align*}
  for $0 \le t \le \frac{1}{2}$ and $x \ge 0$, monotone convergence
  implies $A_Q'(0) = 1 - \Eb_Q[X]$.
\end{proof}

We now return to prove Lemma~\ref{lem:hellinger-projection} proper
using the preceding lemma.
Let \(P\in\nullbasic\) and \(0 \le t < 1\), so
\(1+t(x-1)>0\) for \(x\ge0\).
Cauchy-Schwarz gives
\begin{align*}
  1 - H^2(Q, P)
  & =
  \int \sqrt{dQ dP} =
  \int\sqrt{
    \left(\frac{dQ}{1+t(x-1)}\right)
    \bigl((1+t(x-1))\,dP\bigr)
  } \\
  &\le
  \left(\int \frac{dQ}{1+t(x-1)}\right)^{1/2}
  \left(\int \{1+t(x-1)\}\,dP\right)^{1/2}
  \\
  &=
  \sqrt{A_Q(t)} \sqrt{\Eb_P[1+t(X-1)]}
  \le \sqrt{A_Q(t)}
\end{align*}
because \(P \in\nullbasic\) satisfies $\Eb_P[X] \le 1$,
so $H^2(Q, P) \ge 1 - \sqrt{A_Q(t)}$ for $0 \le t < 1$.
By part~\ref{item:inverse-moment-endpoint-limit} of
Lemma~\ref{lemma:properties-affinity}, \(\inf_{0\le t<1}A_Q(t)=A_Q^\star\),
so we obtain
\begin{equation*}
  H^2(Q,\nullbasic) \ge 1 - \inf_{0 \le t < 1} \sqrt{A_Q(t)}
  = 1 - \sqrt{A_Q^\star}.
\end{equation*}
The equality conditions for Cauchy--Schwarz suggest that to attain this
bound, we should choose \(P\) such that
\begin{equation*}
  \frac{dQ}{1+t(X-1)} \propto \{1+t(X-1)\}\,dP,
  ~~ \mbox{i.e.} ~~
  dP \propto \frac{dQ}{\{1+t(X-1)\}^2}.
\end{equation*}
We make this precise.

By Lemma~\ref{lemma:properties-affinity},
$t_Q = \argmin_{t \in [0, 1]} A_Q(t)$ exists.
First suppose that \(t:=t_Q<1\), so that
$A_Q'(t) \ge 0$ by optimality.
If $t = 0$, then $A_Q'(0) = \Eb_Q[1 - X] \ge 0$, that is,
$\Eb_Q[X] \le 1$ and $Q \in \nullbasic$, proving
Lemma~\ref{lem:hellinger-projection} in this case as $A_Q(0) = 1$.
When $0 < t < 1$, \(A_Q'(t)=0\),
and we observe that
\begin{align*}
  \Eb_Q\frac{1}{\{1+t(X-1)\}^2}
  = 
  \Eb_Q\frac{1 + t(X-1)}{\{1+t(X-1)\}^2} - \Eb_Q\frac{t (X-1)}{\{1+t(X-1)\}^2}
  = A_Q(t) + tA_Q'(t) = A_Q^\star,
\end{align*}
where we use the form of $A_Q'(t)$ from
Lemma~\ref{lemma:properties-affinity}.\ref{item:inverse-moment-derivative}.
Therefore
\begin{equation*}
  dP_Q^\star(x)
  :=
  \frac{1}{A_Q^\star} \cdot \frac{dQ(x)}{\{1+t(x-1)\}^2}
\end{equation*}
defines a probability measure on $\Rb_+$, with mean satisfying
\begin{equation*}
  \Eb_{P_Q^\star}[X-1]
  = 
  \frac{1}{A_Q^{\star}}  \Eb_Q\frac{X-1}{\{1+t(X-1)\}^2}
  =
  -\frac{A_Q'(t)}{A_Q^\star}
  \le 0,
\end{equation*}
where we again use
Lemma~\ref{lemma:properties-affinity}.\ref{item:inverse-moment-derivative}
to evaluate $A_Q'(t)$.
Thus \(P_Q^\star\in \nullbasic\) satisfies the equality
conditions in the Cauchy-Schwarz inequality, so
\begin{equation*}
  H^2(Q,P_Q^\star)
  = 1 - \sqrt{A_Q^\star}.
\end{equation*}
This proves both identities~\eqref{eq:hellinger-radius-dual}
and~\eqref{eq:hellinger-nearest-null} in
Lemma~\ref{lem:hellinger-projection} when \(0 \le t_Q < 1\).

We address the remaining case that \(t_Q=1\).
We would like to take $dP^\star \propto dQ^\star / X^2$, but it is
unclear that this normalizes appropriately or belongs to $\nullbasic$.
Since \(1\) minimizes \(A_Q\) and \(A_Q(0)=1\),
\begin{equation*}
  A_Q^\star = A_Q(1)=\Eb_QX^{-1}\le A_Q(0)=1,
\end{equation*}
so \(Q(X = 0) = 0\).
Let $\epsilon > 0$.
Because \(1\) minimizes \(A_Q\),
\begin{align*}
  0
  & \le A_Q\left(\frac{1}{1 + \epsilon}\right)-A_Q(1)
  =
  \Eb_Q\!\left[
    \frac{1 + \epsilon}{X + \epsilon}-\frac1X
  \right]
  =
  \Eb_Q\!\left[
    \frac{\epsilon(X-1)}{X (X + \epsilon)}
  \right].
\end{align*}
Dividing by $\epsilon > 0$ and rearranging gives
\begin{equation*}
  \Eb_Q\frac{1}{X(X+\epsilon)}
  \le
  \Eb_Q\frac{1}{X+\epsilon}
  \le
  \Eb_QX^{-1}
  =
  A_Q^\star,
\end{equation*}
so taking $\epsilon \downarrow 0$ and applying monotone convergence yields
\begin{equation*}
  \Eb_QX^{-2} \le A_Q^\star = \Eb_QX^{-1} < \infty.
\end{equation*}
Therefore
\begin{equation*}
  P_Q^\star(dx)
  =
  \frac{x^{-2}}{A_Q^\star}\,Q(dx)
  +
  \left(
    1-\frac{\Eb_QX^{-2}}{A_Q^\star}
  \right)\pointmass_0(dx)
\end{equation*}
defines a probability measure with mean
\begin{equation*}
  \Eb_{P_Q^\star}X
  =
  \frac1{A_Q^\star}\Eb_QX^{-1}
  =
  1.
\end{equation*}
Finally, because \(Q(X = 0) = 0\), 
\begin{align*}
  H^2(Q,P_Q^\star)
  =
  1-\int_0^{\infty}
  \sqrt{
    dQ(x)\,
    \frac{x^{-2}}{A_Q^\star}\,dQ(x)
  }
  =
  1-\frac1{\sqrt{A_Q^\star}}\Eb_QX^{-1}
  =
  1-\sqrt{A_Q^\star}.
\end{align*}
This gives the final case of the
projection~\eqref{eq:hellinger-nearest-null}, and
\cref{eq:hellinger-radius-dual} follows.

\subsection{Proof of Theorem~\ref{thm:np-hellinger-scale}}
\label{sec:proof-np-hellinger-scale}

The first assertion~\ref{item:no-detection} is essentially
standard.
Let $P_n = \argmin_{P \in \nullbasic} H^2(Q_n, P)$ be the
projection from Lemma~\ref{lem:hellinger-projection},
so that $H^2(Q_n, \nullbasic) = H^2(Q_n, P_n)$.
Then by tensorization,
\begin{align*}
  H^2\left(Q_n^n, \nullIid^n\right)
  = H^2\bigl(Q_n^n,P_n^n\bigr)
  =
  1-\{1-H^2(Q_n,\nullbasic)\}^n.
\end{align*}
Recall the standard relationship between
total variation and Hellinger distance that
\begin{equation*}
  \tvnorm{P - Q}
  \defeq \sup_A|P(A) - Q(A)|
  \le
  \sqrt{1-\{1-H^2(Q,P)\}^2}
\end{equation*}
and the inequality $\Eb_Q[f] \le \Eb_P[f] + \tvnorm{P - Q}$ for
any $0 \le f \le 1$
\citep[see, e.g.,][]{LeCamYa00}.
Then because \(0\le\varphi_n\le1\),
\begin{align*}
  \Eb_{Q_n^n}[\varphi_n]
  \le
  \Eb_{P_n^n}[\varphi_n]
  +
  \tvnorm{Q_n^n - P_n^n}
  \le
  \alpha+
  \sqrt{
    1-\{1-H^2(Q_n,\nullbasic)\}^{2n}
  }.
\end{align*}
Since \(1-(1-u)^{2n}\le2nu,\) as \( n H^2(Q_n, \nullbasic) \to 0\), \(
\limsup_{n \to \infty} \Eb_{Q_n^n}[\varphi_n] \le \alpha\).

To demonstrate part~\ref{item:asymptotic-detection},
observe for any $t$ independent of the sample
$\Xb$ that
\begin{align*}
  \sup_{P \in \nullbasic}
  \Eb_{P^n}\bigg[\prod_{i=1}^n\{1+t(X_i-1)\}\bigg]
  =
  \sup_{P \in \nullbasic} \left(\Eb_P\left[1 + t (X-1)\right]\right)^n
  \le 1.
\end{align*}
Markov's inequality therefore implies that \(\Eb_{P^n}[\lrtest_n]
\le\alpha\) for any $P \in \nullbasic$, guaranteeing
that $\lrtest_n$ has uniform level $\alpha$ on the null $H_0$.
To obtain the power guarantee, recall $t_n = \argmin_{t \in [0,1]}
A_{Q_n}(t)$ and let \(K_n:=\prod_{i=1}^n\{1+t_n(X_i-1)\}.\)
We first observe that if $X_i \simiid Q_n$, then \(K_n>0\) almost surely.
When \(t_n<1\), this is immediate, and if \(t_n=1\),
\begin{equation*}
  \Eb_{Q_n}X^{-1}
  =
  A_{Q_n}(1)
  \le
  A_{Q_n}(0)
  =
  1
\end{equation*}
by Lemma~\ref{lem:hellinger-projection}, so \(Q_n(X = 0) = 0\).
Applying Markov's inequality to \(K_n^{-1}\),
\begin{align*}
  1-\Eb_{Q_n^n}[\lrtest_n]
  =
  Q_n^n\left\{K_n<\frac1\alpha\right\}
  =
  Q_n^n\{K_n^{-1}>\alpha\}
  \le
  \frac{1}{\alpha}\Eb_{Q_n^n}\left[ K_n^{-1}\right],
\end{align*}
while the definition $A_Q(t) = \Eb_Q[(1 + t(X - 1))^{-1}]$ and
Lemma~\ref{lem:hellinger-projection} yield
\begin{align*}
  \frac{1}{\alpha}\Eb_{Q_n^n}\left[ K_n^{-1}\right]
  =
  \frac1\alpha
  \left\{
    \Eb_{Q_n}\left[\frac{1}{1+t_n(X-1)}\right]
  \right\}^n
  =
  \frac{A_{Q_n}(t_n)^n}{\alpha}
  =
  \frac{\{1-H^2(Q_n,\nullbasic)\}^{2n}}{\alpha}.
\end{align*}
This yields part~\ref{item:asymptotic-detection}, as the final power
claim immediately follows from the inequality
\begin{equation*}
  \{1-H^2(Q_n,\nullbasic)\}^{2n}
  \le
  \exp\{-2nH^2(Q_n,\nullbasic)\}.
\end{equation*}

\subsection{Proof of Proposition~\ref{proposition:binplus-nplr-comparison}}
\label{sec:binplus-nplr-comparison}

We first record the following elementary inequality.

\begin{lemma}
\label{lem:binplus-exponential-ratio}
Let \(c\ge1\) and \(u\in\Rb\).
Then
\begin{equation*}
  \frac{e^u-1}{e^{cu}-1}
  \le
  \frac{1}{\sqrt{c e^{(c-1)u}}},
\end{equation*}
where at $u=0$, we interpret the ratio by its continuous extension $1/c$.
\end{lemma}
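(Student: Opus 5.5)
The plan is to symmetrize the exponentials so that the factor $e^{(c-1)u}$ on the right-hand side cancels exactly, leaving a comparison of hyperbolic sines. Write
\begin{equation*}
  e^{u}-1 = 2e^{u/2}\sinh(u/2),
  \qquad
  e^{cu}-1 = 2e^{cu/2}\sinh(cu/2).
\end{equation*}
For $u\neq0$ this gives
\begin{equation*}
  \frac{e^u-1}{e^{cu}-1}
  = e^{-(c-1)u/2}\,\frac{\sinh(u/2)}{\sinh(cu/2)},
  \qquad
  \frac{1}{\sqrt{c e^{(c-1)u}}} = \frac{1}{\sqrt c}\,e^{-(c-1)u/2}.
\end{equation*}
After cancelling the common positive factor $e^{-(c-1)u/2}$, the claim becomes $\sinh(v)/\sinh(cv)\le 1/\sqrt c$ with $v=u/2$.

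The ratio $\sinh(v)/\sinh(cv)$ is even in $v$ and positive for $v\neq0$, since $c\ge1>0$. It therefore suffices to treat $v>0$. The case $u=0$ is immediate from the stated convention, since the continuous extension $1/c$ satisfies $1/c\le 1/\sqrt c$ because $c\ge1$.

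For $v>0$, I would prove the stronger bound $\sinh(cv)\ge c\sinh(v)$. The function $\sinh$ is convex on $[0,\infty)$ and vanishes at $0$. By the criterion of increasing slopes, $w\mapsto \sinh(w)/w$ is therefore nondecreasing on $(0,\infty)$. Since $cv\ge v$, this gives $\sinh(cv)/(cv)\ge\sinh(v)/v$, that is, $\sinh(v)/\sinh(cv)\le 1/c\le1/\sqrt c$.

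There is no real obstacle here. The only step requiring care is the initial rewriting: the choice of half-angle factorization is what makes the $e^{(c-1)u}$ term cancel exactly. After that, the inequality is actually slack by a factor of $\sqrt c$. That slack suggests the $\sqrt c$ form is simply the one needed later in the proof of Proposition~\ref{proposition:binplus-nplr-comparison}.
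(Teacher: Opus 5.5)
Your proof is correct and matches the paper's argument step for step. Both use the half-angle rewriting $e^{-(c-1)u/2}\sinh(u/2)/\sinh(cu/2)$, the convexity bound $\sinh(ca)\ge c\sinh(a)$ extended to negative arguments by symmetry (the paper cites oddness of $\sinh$, you cite evenness of the ratio), and the final slack $1/c\le 1/\sqrt{c}$.
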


\begin{proof}
The LHS can be rewritten as
\begin{equation*}
  \frac{e^u-1}{e^{cu}-1}
  =
  e^{-\frac{1}{2}(c-1)u}
  \frac{\sinh(u/2)}{\sinh(cu/2)}
  .
\end{equation*}
Since \(\sinh\) is convex on \([0,\infty)\) and \(\sinh(0)=0\), we have
\( \sinh(ca)\ge c\sinh(a)\) for \(a\ge0\).
Thus for \( a \ge 0 \),
\begin{equation*}
  \frac{\sinh(a)}{\sinh(ca)} \le \frac{1}{c} .
\end{equation*}
Since \(\sinh\) is odd, the same inequality holds for \(a\le0\).
Hence

\begin{equation*}
  \frac{e^u-1}{e^{cu}-1}
  \le
  \frac{e^{-(c-1)u/2}}{c}
  \le
  \frac{1}{\sqrt{c e^{(c-1)u}}}.
\end{equation*}
The bound extends to \(u=0\) by continuity.
\end{proof}

\begin{proof}[Proof of \Cref{proposition:binplus-nplr-comparison}]
  If \(x_{\max}\le1\), then \(\pBinPlus(\bm{x})=\nplr(\bm{x})=1,\) so there is
  nothing to prove.
  Hence suppose that \(x_{\max}>1\), and let \(t_\star\)
  maximize \(\prodPoly_{\bm{x}}(t)\) over \(t\in[0,1]\).
Since
\(\prodPoly_{\bm{x}}(0)=1\), the maximum is at least one.
If it equals one, then \(\nplr(\bm{x})=1\), and the claim follows
from \(\pBinPlus(\bm{x})\le1\).
We may therefore assume \(t_\star>0\).

Choose \(k\) with \(x_k=x_{\max}\).
On
\(\left(-(x_k-1)^{-1},1\right)\),
\(\log\prodPoly_{\bm{x}_{-k}}(s)\) is concave.
Consider its secant function
from \(s=0\) to \(s=t_\star\):
\begin{equation*}
  L(s)
  :=
  \frac{s}{t_\star}
  \log\prodPoly_{\bm{x}_{-k}}(t_\star).
\end{equation*}
Concavity therefore gives
\(\log\prodPoly_{\bm{x}_{-k}}(s)\le L(s)\) for
\(s\in\left(-(x_k-1)^{-1},0\right]\), and
\(\log\prodPoly_{\bm{x}_{-k}}(s)\ge L(s)\) for
\(s\in[0,t_\star]\).
Hence,
\begin{equation*}
  \int_0^{t_\star}\prodPoly_{\bm{x}_{-k}}(s)\,ds
  \ge
  \int_0^{t_\star}e^{L(s)}\,ds
  \qquad\text{and}\qquad
  \int_{\frac{-1}{x_k-1}}^0\prodPoly_{\bm{x}_{-k}}(s)\,ds
  \le
  \int_{\frac{-1}{x_k-1}}^0e^{L(s)}\,ds.
\end{equation*}
\Cref{eq:def-Pvee} implies that
\begin{multline*}
  \pBinPlus(\bm{x})
  =
  \left\{
    1+
    \frac{\int_0^1\prodPoly_{\bm{x}_{-k}}(s)\,ds}
    {\int_{-1/(x_k-1)}^0\prodPoly_{\bm{x}_{-k}}(s)\,ds}
  \right\}^{-1}
  \le
  \left\{
    1+
    \frac{\int_0^{t_\star}e^{L(s)}\,ds}
    {\int_{-1/(x_k-1)}^0e^{L(s)}\,ds}
  \right\}^{-1}
  \\
  =
  \frac{\int_{-1/(x_k-1)}^0e^{L(s)}\,ds}
  {\int_{-1/(x_k-1)}^{t_\star}e^{L(s)}\,ds}
  =
  \frac{
    \exp \left( \frac{L(t_{\star})}{t_{\star}(x_k-1)} \right) - 1
  }{
    \exp \left( \frac{(1+t_{\star}(x_k-1))}{t_{\star}(x_k-1)} \cdot L(t_{\star}) \right) - 1
  }
  .
\end{multline*}
Moreover
\begin{equation*}
  \nplr(\bm{x})
  =
  \prodPoly_{\bm{x}}(t_\star)^{-1}
  =
  \frac{1}{\{1+t_\star(x_k-1)\}e^{L(t_\star)}}
  .
\end{equation*}
Applying \Cref{lem:binplus-exponential-ratio}
with \(u=\frac{L(t_\star)}{t_\star(x_k-1)}\) and \(c=1+t_\star(x_k-1)\) gives
\begin{equation*}
  \pBinPlus(\bm{x})
  \le
  \sqrt{\nplr(\bm{x})}.
  \qedhere
\end{equation*}
\end{proof}

\section{Regular alternatives and asymptotic power}

\subsection{Derivation of Example~\ref{example:dqm-mean}:
  influence functions for the mean}
\label{sec:derivative-of-mean}

The tail condition implies that \(X\in L^2(Q_\theta)\) for every
\(|\theta|<\varepsilon\). In particular, \(\Eb_\theta[X]\) is finite,
and Cauchy-Schwarz gives \(\Eb_0|X \score_0|<\infty\).
For every bounded measurable $h$, quadratic
mean differentiability~\eqref{eq:dqm-assumption} yields
\begin{align*}
  \frac{\Eb_\theta h-\Eb_0h}{\theta}
  & =
  \int h
  \frac{\sqrt{dQ_\theta}-\sqrt{dQ_0}}{\theta}
  \left(\sqrt{dQ_\theta}+\sqrt{dQ_0}\right) \\
  & = \int h \frac{\score_0}{2} \sqrt{dQ_0}
  \left(2 + \frac{\theta}{2} \score_0\right) \sqrt{dQ_0}
  + o(1)
  =
  \Eb_0[h \score_0] + o(1).
\end{align*}
Applying this expansion to \(X\one\{X\le M\} \in [0, M]\) gives
\begin{align*}
  \lim_{\theta \to 0} \frac{
    \Eb_\theta[X\one\{X\le M\}]
    -
    \Eb_0[X\one\{X\le M\}]
  }{\theta}
  =
  \Eb_0[X \score_0 \one\{X\le M\}].
\end{align*}
For the tail, Cauchy--Schwarz gives
\begin{multline*}
  \left|
    \frac{
      \Eb_\theta[X\one\{X>M\}]
      -
      \Eb_0[X\one\{X>M\}]
    }{\theta}
  \right|
  \\
  \le
  \left\{
    \int
    \left(
      \frac{\sqrt{dQ_\theta}-\sqrt{dQ_0}}{\theta}
    \right)^2
  \right\}^{1/2}
  \left(
    \int X^2\one\{X>M\}
    \left(\sqrt{dQ_\theta}+\sqrt{dQ_0}\right)^2
  \right)^{1/2}.
\end{multline*}
Quadratic mean differentiability~\eqref{eq:dqm-assumption}
  implies that the first factor is \(O(1)\) as $\theta \to 0$,
while
\begin{equation*}
  \left(\sqrt{dQ_\theta}+\sqrt{dQ_0}\right)^2
  \le
  2(dQ_\theta+dQ_0),
\end{equation*}
implies that
\begin{equation*}
  \int X^2\one\{X>M\}
  \left(\sqrt{dQ_\theta}+\sqrt{dQ_0}\right)^2
  \le
  2\Eb_\theta\!\left[X^2\one\{X>M\}\right]
  +
  2\Eb_0\!\left[X^2\one\{X>M\}\right].
\end{equation*}
By the uniform integrability assumption in Example~\ref{example:dqm-mean},
we can choose $M$ large enough that each of these terms is negligible,
so
\begin{align*}
  \lim_{M\to\infty}
  \limsup_{\theta\to0}
  \left|
  \frac{
    \Eb_\theta[X\one\{X>M\}]
    -
    \Eb_0[X\one\{X>M\}]
  }{\theta}
  \right|
  =
  0.
\end{align*}
Finally, that $\sup_{|\theta| < \varepsilon} \Eb_\theta[X^2 \one\{X > M\}] \to 0$
as $M \to \infty$ implies that
\(X\one\{X\le M\}\to X\) in \(L^2(Q_0)\), so Cauchy-Schwarz gives
\begin{align*}
  \lim_{M \to \infty} \Eb_0[X \score_0 \one\{X\le M\}]
  =
  \Eb_0[X \score_0].
\end{align*}
Combining each of the preceding limits demonstrates the example.

\subsection{Proof of Corollary~\ref{corollary:nonparametric-envelope}}
\label{sec:proof-nonparametric-envelope}

Define the mean functional \(\Psi(P):=\Eb_PX-1.\)
Thus the null and alternative hypotheses are \(\Psi(P)\le0\) and
\(\Psi(P)>0\), respectively.
The tangent space of the nonparametric model
(see, e.g., Section 25.3 and especially
Example 25.16 of \citet{VanDerVaart98}) at \(P_0\) is
\begin{equation*}
  \mathcal T
  :=
  \left\{
    g\in L^2(P_0):\Eb_0g=0
  \right\},
\end{equation*}
as each bounded \(g\in\mathcal T\) is the score of the DQM submodel
$dP_{t,g}=(1+tg) dP_0$ for small \(\lvert t\rvert\), and
bounded elements are dense in \(\mathcal T\).

The functional $\Psi$
has pathwise (Gateaux) derivative
\begin{equation*}
  \dot\Psi_{P_0}(g)
  =
  \lim_{t\to0} \frac{\Psi(P_{t,g})-\Psi(P_0)}{t}
  =
  \Eb_0[Xg]
  =
  \Eb_0[(X-1)g]
\end{equation*}
for \(g\in\mathcal T\),
so $\Psi$ has efficient influence function
\(\widetilde\psi(x):= x-1\)  at \(P_0\)
with $L^2$ norm \(\|\widetilde\psi\|_{L^2(P_0)}=\sigma\).
Along given parametric submodel with score $\score_0$,
\begin{equation*}
  \langle\widetilde\psi, \score_0\rangle_{P_0}
  := \Eb_0[\widetilde{\psi} \, \score_0]
  =
  \dot\mu.
\end{equation*}
The one-sided semiparametric local power bound
\citep[Theorem~25.44]{VanDerVaart98} gives
\begin{equation*}
  \limsup_{n\to\infty}
  \Eb_{Q_{h/\sqrt n}^n}[\varphi_n]
  \le
  1-\Phi\left(
    z_{1-\alpha}
    -
    h\frac{\langle\widetilde\psi, \score_0\rangle_{P_0}}
            {\|\widetilde\psi\|_{L^2(P_0)}}
  \right)
  =
  1-\Phi\left(
    z_{1-\alpha}-\frac{h\dot\mu}{\sigma}
  \right).
  \qedhere
\end{equation*}

\subsection{Regular alternatives with finite variance}
\label{sec:regularly-varying-np-local}

\begin{proposition}[Finite-variance local expansion
    of squared Hellinger distance]
  \label{proposition:regularly-varying-np-local}
  Let $(Q_\theta)$ be a DQM~\eqref{eq:dqm-assumption} family
  with derivative~\eqref{eq:mean-derivative},
  and assume the local integrability
  condition in Example~\ref{example:dqm-mean}.
  Then, as $\theta\downarrow0$,
  \begin{equation*}
    H^2(Q_\theta,\nullbasic)
    =
    \frac{\dot\mu^2\theta^2}{8\sigma^2}
    +o(\theta^2).
  \end{equation*}
\end{proposition}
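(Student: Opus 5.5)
By Lemma~\ref{lem:hellinger-projection}, $H^2(Q_\theta,\nullbasic)=1-\sqrt{A^\star_\theta}$ with $A^\star_\theta=\min_{0\le t\le1}A_{Q_\theta}(t)$. It therefore suffices to show
\begin{equation*}
  1-A^\star_\theta=\frac{\dot\mu^2\theta^2}{4\sigma^2}+o(\theta^2),
\end{equation*}
because $1-\sqrt{1-x}=x/2+O(x^2)$. The plan is a second-order expansion of the affinity around $t=0$ at the scale $t=\theta s$. Writing $\xi=X-1$, the pointwise identity
\begin{equation*}
  \frac{1}{1+t\xi}=1-t\xi+\frac{t^2\xi^2}{1+t\xi}
\end{equation*}
gives
\begin{equation*}
  A_{Q_\theta}(t)=1-t\,\Eb_\theta\xi+t^2\,\Eb_\theta\!\left[\frac{\xi^2}{1+t\xi}\right].
\end{equation*}
Here $\Eb_\theta\xi=\dot\mu\theta+o(\theta)$ by Example~\ref{example:dqm-mean} and \eqref{eq:mean-derivative}.

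\emph{Upper bound on $1-A^\star_\theta$.} For a fixed $s$, take $t=\theta s$. Since $\xi\ge-1$, we have $1+t\xi\ge1-t$, so the quadratic term is at most $t^2\Eb_\theta\xi^2/(1-t)$. Hence
\begin{equation*}
  1-A^\star_\theta\ \ge\ \theta^2\left(s\dot\mu-\frac{s^2}{2}\cdot 2\sigma_\theta^2\right)+o(\theta^2)
\end{equation*}
in the sense needed, where $\sigma_\theta^2:=\Eb_\theta\xi^2$. The uniform integrability in Example~\ref{example:dqm-mean} together with DQM gives $\Eb_\theta\xi^2\to\Eb_0\xi^2=\sigma^2$. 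I would obtain this from $L^2$ convergence on truncations plus the uniform tails, exactly as in Appendix~\ref{sec:derivative-of-mean}. The factor bookkeeping has to be done carefully: the true second-order coefficient is $t^2\Eb\xi^2$, not $t^2\Eb\xi^2/2$. Optimizing $s\dot\mu-s^2\sigma^2$ at $s=\dot\mu/(2\sigma^2)$ gives $\dot\mu^2/(4\sigma^2)$, which matches the target. This yields $1-A^\star_\theta\ge\dot\mu^2\theta^2/(4\sigma^2)+o(\theta^2)$.

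\emph{Lower bound on $A^\star_\theta$.} First I localize the minimizer $t_\theta$. By convexity of $A_{Q_\theta}$ and the Cauchy--Schwarz inequality $\Eb[1/(1+t\xi)]\ge 1/\Eb[1+t\xi]=1/(1+t\Eb_\theta\xi)$, we get $A^\star_\theta\ge1/(1+\Eb_\theta\xi)$, but this is too crude. Instead I would show $t_\theta=O(\theta)$. Fix a large $M$ and use that $x\mapsto 1/(1+tx)$ is convex and decreasing, which gives
\begin{equation*}
  \frac{\xi^2}{1+t\xi}\ \ge\ \frac{\xi^2\one\{\xi\le M\}}{1+tM}.
\end{equation*}
Hence for $t\le1$,
\begin{equation*}
  A_{Q_\theta}(t)\ \ge\ 1-t\,\Eb_\theta\xi+\frac{t^2}{1+tM}\,\Eb_\theta[\xi^2\one\{\xi\le M\}].
\end{equation*}
Since $\Eb_\theta[\xi^2\one\{\xi\le M\}]\ge\sigma^2/2$ for small $\theta$, any $t$ with $t\gg\theta$ makes the right side exceed $1$. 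Therefore $t_\theta\le C\theta$ for some constant $C$. On $t\le C\theta$ we have $1+tM\to1$, so minimizing the quadratic lower bound over $t$ gives
\begin{equation*}
  1-A^\star_\theta\le\frac{(\Eb_\theta\xi)^2}{4\,\Eb_\theta[\xi^2\one\{\xi\le M\}]}(1+o(1)).
\end{equation*}
Letting $M\to\infty$ and using the uniform integrability matches the upper bound.

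The main obstacle is controlling the second moment under $Q_\theta$ uniformly, namely showing $\Eb_\theta[\xi^2\one\{\xi\le M\}]\to\sigma^2$ as $\theta\to0$ and then $M\to\infty$. For this I would rerun the truncation argument of Appendix~\ref{sec:derivative-of-mean}: DQM gives $\Eb_\theta h\to\Eb_0h$ for bounded $h$, and the uniform tail condition controls the remainder. The other delicate point is the lower bound, where $1+t\xi$ can be large; the convexity trick above is intended to handle it.
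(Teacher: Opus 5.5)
Your argument is correct, but it handles the key localization step differently from the paper. Both proofs start from $H^2(Q_\theta,\nullbasic)=1-\sqrt{A^\star_\theta}$ and from the identity $A_{Q_\theta}(t)=1-t\,\Eb_\theta\xi+t^2\,\Eb_\theta[\xi^2/(1+t\xi)]$.

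The paper then rescales $t=\delta_\theta u$ with $\delta_\theta=\Eb_\theta\xi$. It proves that $\Eb_{\theta_n}[\xi^2/(1+\delta_{\theta_n}u\xi)]\to\sigma^2$ uniformly over $u\in[0,C]$ and invokes Lemma~\ref{lem:concave-local-maximization}. Concavity of $1-A_{Q_\theta}$ then confines the optimizer to an $O(\delta_\theta)$ window and yields both inequalities at once.

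You instead sandwich $A^\star_\theta$ between two one-sided pointwise bounds. Plugging in $t=\theta s$ and using $1+t\xi\ge1-t$ bounds $A^\star_\theta$ from above. The truncated minorant $\xi^2/(1+t\xi)\ge\xi^2\one\{\xi\le M\}/(1+tM)$ does double duty: it forces every minimizer into $[0,C\theta]$ and supplies the matching lower bound. That step uses only that $x\mapsto1/(1+tx)$ is decreasing, not convex, and it needs no convexity of $A_{Q_\theta}$ either.

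The paper's route is shorter because it reuses a lemma already needed for Theorem~\ref{thm:nplr-local-limit}. Yours is self-contained and shows exactly where the uniform integrability enters. The obstacle you flag is lighter than you expect for the lower bound on $A^\star_\theta$. Since $\xi\ge-1$, the function $\xi^2\one\{\xi\le M\}\le\max\{1,M^2\}$ is bounded. So total-variation convergence from DQM gives $\Eb_\theta[\xi^2\one\{\xi\le M\}]\to\Eb_0[\xi^2\one\{\xi\le M\}]$, and monotone convergence under $Q_0$ handles $M\to\infty$. Uniform integrability is needed only for $\limsup_{\theta\downarrow0}\Eb_\theta\xi^2\le\sigma^2$ in the other half.

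One cosmetic point: your first heading is mislabeled. That paragraph gives a lower bound on $1-A^\star_\theta$ (equivalently an upper bound on $A^\star_\theta$), not an upper bound on $1-A^\star_\theta$.
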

\begin{proof}
  The dual representation~\eqref{eq:hellinger-radius-dual} implies
  \begin{equation*}
    H^2(Q_\theta,\nullbasic) = 1-\sqrt{A_{Q_\theta}^\star}
    = 1 - \sqrt{\inf_{0\le t\le1}A_{Q_\theta}(t)}.
  \end{equation*}
  For shorthand, write \( \delta_{\theta} := \Eb_{\theta}[X-1]\).
  We approximate $A_{Q_\theta}$ in a neighborhood of $t = 0$ by a quadratic,
  which allows us to develop the closed form.
  Adding and subtracting $t \delta_\theta$
  gives
  \begin{equation*}
    A_{Q_\theta}(t) = \Eb_\theta\frac{1}{1+t(X-1)}
    = 1 - \delta_{\theta} t + t^2\Eb_{\theta}\frac{(X-1)^2}{1+t(X-1)}.
  \end{equation*}
  We will apply Lemma~\ref{lem:concave-local-maximization} for arbitrary
  $\theta_n \downarrow 0$ by taking \(r_n:=\delta_{\theta_n}\), \(S_n:=1\),
  \(V_n:=2\sigma^2\), and
  \begin{equation}
    f_n(t):=\frac{1-A_{Q_{\theta_n}}(t)}{\delta_{\theta_n}^2}
    = \frac{t}{r_n}
    - \frac{t^2}{r_n^2}
    \Eb_{\theta_n}\frac{(X - 1)^2}{1 + t (X - 1)}
    ~~~ \mbox{and} ~~~
    g_n(u) \defeq u - u^2 \sigma^2.
    \label{eq:finite-variance-A-expansion}
  \end{equation}

  We verify its conditions.
  By assumption~\eqref{eq:mean-derivative},
  \(\delta_\theta=\dot\mu\,\theta+o(\theta),\) so \(r_n = \dot{\mu} \theta_n
  + o(\theta_n) >0\) for large $n$ and \(r_n\to0\).
  Since \(A_{Q_{\theta_n}}\) is closed convex, \(f_n\) is
  closed concave.
  It remains to verify the uniformity
  condition $\sup_{0 \le u \le C} |f_n(r_n u) - g_n(u)| \to 0$
  for all finite $C$.
  Representation~\eqref{eq:finite-variance-A-expansion} gives
  \begin{equation*}
    f_n(r_nu)
    =
    u-u^2
    \Eb_{\theta_n}
    \frac{(X-1)^2}{1+r_nu(X-1)}.
  \end{equation*}
  Thus, it suffices to show that
  \begin{equation}
    \lim_{n \to \infty} \sup_{0\le u\le C}
    \left|
      \Eb_{\theta_n}
      \frac{(X-1)^2}{1+r_nu(X-1)}
      -\sigma^2
    \right|
    = 0.
    \label{eq:finite-variance-uniform-curvature}
  \end{equation}
  Assume $n$ is large enough
  that $C r_n < \half$.
  Then
  \begin{equation*}
    \left|
      \frac{(x-1)^2}{1+r_nu(x-1)}
      -(x-1)^2
    \right|
    =
    \frac{r_nu|x-1|^3}{1+r_nu(x-1)}.
  \end{equation*}
  On \(\{|x-1|\le M\}\), the denominator is at least \(1-Cr_n\ge1/2\), so the
  right-hand side is at most \(2Cr_nM^3\).
  When \(M > 2\), the event \(\{|x-1| > M\}\) implies $x > 1$ so
  $(x-1)^2 / (1 + r_n u (x - 1)) \le (x - 1)^2$.
  Taking expectations therefore gives
  \begin{equation*}
    \left|
      \Eb_{\theta_n}
      \frac{(X-1)^2}{1+r_nu(X-1)}
      -
      \Eb_{\theta_n}(X-1)^2
    \right|
    \le
    2Cr_nM^3
    +
    \Eb_{\theta_n}\!\left[
      (X-1)^2\one\{|X-1|>M\}
    \right]
  \end{equation*}
  for $0 \le u \le C$ and $M > 2$.
  Take $n \to \infty$ and then $M \to \infty$, using
  the integrability condition in Example~\ref{example:dqm-mean}
  to see that
  \begin{equation*}
    \lim_{M\to\infty}\limsup_{n\to\infty}
    \Eb_{\theta_n}[(X - 1)^2 \one\{X - 1 > M\}] = 0.
  \end{equation*}
  Moreover, DQM implies $Q_{\theta_n}\to Q_0$ in total variation, which together
  with the same uniform integrability gives
  $\Eb_{\theta_n}(X-1)^2\to\Eb_0(X-1)^2=\sigma^2$.
  Hence \eqref{eq:finite-variance-uniform-curvature} follows.

  \Cref{lem:concave-local-maximization} therefore applies when
  \(\theta_n \downarrow 0\).
  Thus
  \begin{equation*}
    1-A_{Q_\theta}^\star
    =
    \delta_\theta^2
    \sup_{u\ge0}\{u-\sigma^2u^2\}
    +o(\delta_\theta^2)
    =
    \frac{\delta_\theta^2}{4\sigma^2}
    +o(\delta_\theta^2).
  \end{equation*}
  Since \(1-\sqrt{1-z}=z/2+o(z)\) as \(z\to0\),
  \begin{equation*}
    H^2(Q_\theta,\nullbasic)
    =
    1-\sqrt{A_{Q_\theta}^\star}
    =
    \frac12\{1-A_{Q_\theta}^\star\}
    +o(\delta_\theta^2)
    =
    \frac{\dot\mu^2\theta^2}{8\sigma^2}
    +o(\theta^2).
    \qedhere
  \end{equation*}
\end{proof}

\subsection{Uniform Pareto-type alternatives}\label{sec:heavy-tail-np-local}

The following lemma provides a uniform analogue of the Abelian theorem for
Mellin convolutions; compare \citet[Theorem 4.1.6, p.~201]{BinghamGoTe87}.
We include a proof
because the standard theorem is pointwise, whereas we require uniformity over
the family \( \{ F_{\delta} \} \).

\begin{lemma}
  \label{lem:uniform-tail-integration}
  Let \(\rho \in (1, 2)\).
  For \(0<\delta\le\delta_0\), let \(C_\delta\) satisfy
  \(0<c_-\le C_\delta\le c_+<\infty\).
  Assume the functions
  \(G_\delta:\Rb_+\to[0,1]\) satisfy the tail condition~\eqref{eqn:pareto-tail},
  i.e.,
  \begin{align*}
    \lim_{x\to\infty}
    \sup_{0<\delta\le\delta_0}
    \left|
    \frac{G_\delta(x)}{C_\delta x^{-\rho}}
    - 1
    \right|
    = 0,
  \end{align*}
  and let \(r : \Rb_+ \to \Rb\) satisfy
  \begin{equation*}
    \int_1^\infty
    |r(u)|u^{-\rho}\,du
    <\infty
    ~~~ \mbox{and} ~~~
    \sup_{0 < u \le 1} \frac{|r(u)|}{u} \le K \label{eq:r-tail}
  \end{equation*}
  for some finite $K$.
  Then
  \begin{align*}
    \lim_{t\downarrow0}
    \sup_{0<\delta\le\delta_0}
    \left|
    \frac{1}{C_\delta t^\rho}
    \int_0^\infty
    r(u)G_\delta(u/t)\,du
    -
    \int_0^\infty r(u)u^{-\rho}\,du
    \right|
    =0.
  \end{align*}
\end{lemma}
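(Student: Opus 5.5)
Substituting $x=u/t$ rewrites the target as $\frac{1}{C_\delta t^\rho}\int_0^\infty r(u)G_\delta(u/t)\,du=\int_0^\infty r(u)\,\frac{G_\delta(u/t)}{C_\delta (u/t)^{-\rho}}\,u^{-\rho}\,du$. Write $R_\delta(x):=G_\delta(x)/(C_\delta x^{-\rho})$. The tail hypothesis says $\epsilon(M):=\sup_{x\ge M}\sup_\delta|R_\delta(x)-1|\to0$ as $M\to\infty$. The difference to control is therefore $\int_0^\infty r(u)u^{-\rho}\{R_\delta(u/t)-1\}\,du$, uniformly in $\delta$. Fix $\eta\in(0,1]$ and split the integral at $u=\eta$.

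\textbf{Region $u\ge\eta$.} Here $u/t\ge\eta/t$, so $|R_\delta(u/t)-1|\le\epsilon(\eta/t)$ uniformly in $\delta$. The integral over this region is then at most $\epsilon(\eta/t)\int_\eta^\infty|r(u)|u^{-\rho}\,du$. This is finite: on $[\eta,1]$ we have $|r(u)|u^{-\rho}\le Ku^{1-\rho}$, which is integrable near zero since $\rho<2$, and on $[1,\infty)$ we use the first hypothesis on $r$. Hence the contribution tends to $0$ as $t\downarrow0$ for each fixed $\eta$.

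\textbf{Region $u<\eta$.} Here we cannot use the asymptotics, because $u/t$ may be small. Instead we bound crudely, using $G_\delta\le1$ and $|r(u)|\le Ku$. The $G$ part gives
\[
\frac{1}{C_\delta t^\rho}\int_0^\eta |r(u)|G_\delta(u/t)\,du\le \frac{K}{c_-t^\rho}\int_0^\eta u\,\min\{1,G_\delta(u/t)\}\,du .
\]
This needs a uniform tail bound of the form $G_\delta(x)\le 2c_+x^{-\rho}$ for $x\ge M_0$, which follows from the hypothesis. Splitting at $u=M_0t$ gives:
\begin{itemize}
\item the piece $u<M_0t$ is at most $K M_0^2t^2/(2c_-t^\rho)=O(t^{2-\rho})\to0$;
\item the piece $M_0t\le u<\eta$ is at most $\frac{2Kc_+}{c_-}\int_0^\eta u^{1-\rho}\,du=\frac{2Kc_+}{c_-}\cdot\frac{\eta^{2-\rho}}{2-\rho}$.
\end{itemize}
The limit term satisfies $\int_0^\eta|r(u)|u^{-\rho}\,du\le K\eta^{2-\rho}/(2-\rho)$ in the same way.

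\textbf{Conclusion.} Taking $\limsup_{t\downarrow0}$ of the supremum over $\delta$ yields a bound of order $\eta^{2-\rho}$. Letting $\eta\downarrow0$ completes the proof. The only delicate step is the small-$u$ region, where $R_\delta$ is uncontrolled. The bound $|r(u)|\le Ku$ together with $\rho<2$ is exactly what makes both the $O(t^{2-\rho})$ term and the $\eta^{2-\rho}$ term vanish. The uniform lower bound $C_\delta\ge c_-$ is what keeps these estimates uniform in $\delta$.
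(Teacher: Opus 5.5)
Your proof is correct and uses the same ingredients as the paper: $G_\delta\le 1$ and $|r(u)|\le Ku$ near zero give an $O(t^{2-\rho})$ piece, the uniform relative tail approximation handles the rest, and $C_\delta\ge c_-$ keeps things uniform. Where you differ is the decomposition: you add a fixed cutoff $\eta$ and a second limit $\eta\downarrow0$.

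The paper fixes $\varepsilon>0$ and picks $x_0$ with $\sup_\delta|R_\delta(x)-1|\le\varepsilon$ for $x\ge x_0$. It then splits only at the moving point $u=tx_0$. On $[tx_0,\infty)$ one has $u/t\ge x_0$, so the entire outer contribution is at most $\varepsilon\int_0^\infty|r(u)|u^{-\rho}\,du$. This is finite because $|r(u)|u^{-\rho}\le Ku^{1-\rho}$ is integrable at $0$, a fact you also use.

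Your intermediate band $[M_0t,\eta)$ is therefore already covered by the $\varepsilon$-relative bound. There you dominate $R_\delta$ by a constant and then must send $\eta\to0$ to remove the resulting $O(\eta^{2-\rho})$ term. The paper's argument is one-parameter and never uses $c_+$. Your use of $c_+$ is also avoidable: writing $G_\delta(x)\le 2C_\delta x^{-\rho}$ makes $C_\delta$ cancel on that band.

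Your version is in effect a uniform dominated-convergence argument: domination on $[M_0t,\infty)$ plus uniform convergence on $[\eta,\infty)$. It is a bit longer than the paper's proof but equally rigorous.
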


\begin{proof}
  Let \(\varepsilon>0\).
  By assumption, there exists \(x_0\ge1\) such that for all \(x\ge x_0\) and
  \(0<\delta\le\delta_0\),
  \begin{align}
    \left|
    \frac{G_\delta(x)}{C_\delta x^{-\rho}}
    -1
    \right|
    \le\varepsilon
    \label{eq:uniform-tail-epsilon-bound}
    .
  \end{align}
  For \(0 < t\le x_0^{-1}\), split the integrals at \(tx_0 \le 1\).
  We claim that as \(t\downarrow0\),
  \begin{subequations}
    \begin{equation}
    \sup_{0<\delta\le\delta_0}
    \left|
    \frac{1}{C_\delta t^\rho}
    \int_0^{tx_0} r(u)G_\delta(u/t)\,du
    -
    \int_0^{tx_0} r(u)u^{-\rho}\,du
    \right|
    =
    O(t^{2-\rho}),
    \label{eq:uniform-tail-small-part-claim}
  \end{equation}
  and
  \begin{equation}
    \sup_{0<\delta\le\delta_0}
    \left|
    \frac{1}{C_\delta t^\rho}
    \int_{tx_0}^\infty r(u)G_\delta(u/t)\,du
    -
    \int_{tx_0}^\infty r(u)u^{-\rho}\,du
    \right|
    \le
    \varepsilon
    \int_0^\infty |r(u)|u^{-\rho}\,du.
    \label{eq:uniform-tail-large-part-claim}
  \end{equation}
  \end{subequations}
  To prove \eqref{eq:uniform-tail-small-part-claim}, use
  that \(G_\delta\le1\), \(C_\delta\ge c_-\), and
  \(|r(u)|\le Ku\) on \(0 < u \le 1\) to obtain
  \begin{align*}
    \sup_{0<\delta\le\delta_0}
    \frac{1}{C_\delta t^\rho}
    \left|
    \int_0^{tx_0}
    r(u)G_\delta(u/t)\,du
    \right|
    &\le
    \frac{K}{c_-t^\rho}
    \int_0^{tx_0}u\,du
    =
    \frac{Kx_0^2}{2c_-}t^{2-\rho}
    .
  \end{align*}
  Similarly, 
  \begin{equation*}
    \int_0^{tx_0}|r(u)|u^{-\rho}\,du
    \le
    K\int_0^{tx_0}u^{1-\rho}\,du
    =
    \frac{K}{2-\rho}(tx_0)^{2-\rho}
    .
  \end{equation*}
  \Cref{eq:uniform-tail-small-part-claim} follows from the triangle inequality.

  To prove \eqref{eq:uniform-tail-large-part-claim}, note that for \(u\ge
  tx_0\), we have \(u/t\ge x_0\), so the tail
  bound~\eqref{eq:uniform-tail-epsilon-bound} gives
  $|\frac{G_\delta(u/t)}{C_\delta t^\rho} - u^{-\rho}| \le \varepsilon
  u^{-\rho}$.
  Multiplying by \(|r(u)|\) and integrating yields
  claim~\eqref{eq:uniform-tail-large-part-claim}.
  The assumptions of the lemma trivially make
  the right-hand integral~\eqref{eq:uniform-tail-large-part-claim} finite.
  Combine \eqref{eq:uniform-tail-small-part-claim} and
  \eqref{eq:uniform-tail-large-part-claim} and take
  \(\varepsilon\downarrow0\).
\end{proof}

\begin{proposition}
  \label{proposition:heavy-tail-np-local}
  Let \((Q_\theta)_{0<\theta\le\theta_0}\) be probability laws on \(\Rb_+\)
  satisfying \(\Eb_{Q_\theta} X = 1 + \theta\)
  and the tail condition~\eqref{eqn:pareto-tail} for some
  $1 < \rho < 2$.
  Then as $\theta \to 0$,
  \begin{align*}
    H^2(Q_\theta,\nullbasic)
    =
    \frac{\rho-1}{2\rho}
    \left(
    -\frac{\rho^2\pi C_\theta}{\sin(\pi\rho)}
    \right)^{-\frac{1}{\rho-1}}
    \cdot \theta^{\frac{\rho}{\rho-1}}
    \cdot (1 + o(1)).
  \end{align*}
\end{proposition}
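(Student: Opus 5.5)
The plan is to start from the dual representation~\eqref{eq:hellinger-radius-dual} of Lemma~\ref{lem:hellinger-projection}, $H^2(Q_\theta,\nullbasic)=1-\sqrt{A_{Q_\theta}^\star}$. Because $1-\sqrt{1-z}=z/2+o(z)$, it suffices to show
\begin{equation*}
  1-A^\star_{Q_\theta}=\frac{\rho-1}{\rho}(\rho K_\theta)^{-\frac{1}{\rho-1}}\theta^{\frac{\rho}{\rho-1}}(1+o(1)),
  \qquad
  K_\theta:=-\frac{\rho\pi C_\theta}{\sin(\pi\rho)}.
\end{equation*}
Since $1<\rho<2$, we have $K_\theta>0$. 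Because the variance is infinite, the quadratic expansion used in Proposition~\ref{proposition:regularly-varying-np-local} is not available. Instead I would reparametrize to isolate a function of $sX$ alone. With $s=t/(1-t)$ one has $1+t(x-1)=(1-t)(1+sx)$, so
\begin{equation*}
  A_{Q_\theta}(t)=(1+s)\,\Eb_\theta\frac{1}{1+sX}=(1+s)\Bigl(1-s(1+\theta)+\Eb_\theta\frac{(sX)^2}{1+sX}\Bigr).
\end{equation*}

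The key step is the asymptotics of the remainder $R_\theta(s):=\Eb_\theta[\psi(sX)]$ with $\psi(y)=y^2/(1+y)$. Integration by parts against the tail $G_\theta(x)=Q_\theta(X>x)$ gives
\begin{equation*}
  R_\theta(s)=\int_0^\infty r(u)\,G_\theta(u/s)\,du,
  \qquad
  r=\psi',\quad r(u)=\frac{u^2+2u}{(1+u)^2}.
\end{equation*}
This $r$ satisfies $|r(u)|\le 2u$ on $(0,1]$ and $r\le1$, so $\int_1^\infty|r|u^{-\rho}<\infty$. Lemma~\ref{lem:uniform-tail-integration} therefore yields $R_\theta(s)=C_\theta s^\rho\bigl(\int_0^\infty r(u)u^{-\rho}du+o(1)\bigr)$ uniformly in $\theta$ as $s\downarrow0$. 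Integrating by parts once more,
\begin{equation*}
  \int_0^\infty u^{-\rho}\,d\psi(u)=\rho\int_0^\infty\frac{u^{1-\rho}}{1+u}\,du=\frac{\rho\pi}{\sin(\pi(2-\rho))}=-\frac{\rho\pi}{\sin(\pi\rho)},
\end{equation*}
so $R_\theta(s)=K_\theta s^\rho(1+o(1))$ uniformly in $\theta$. Expanding the product then gives $1-A_{Q_\theta}(t)=s\theta-K_\theta s^\rho+O(s^2+s^{1+\rho}+s^2\theta)$, and the error terms are $o(s^\rho)$ because $\rho<2$.

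For the optimization step, minimizing $-s\theta+K_\theta s^\rho$ over $s\ge0$ gives $s^\ast=(\theta/(\rho K_\theta))^{1/(\rho-1)}\to0$ with value $-\frac{\rho-1}{\rho}\theta s^\ast$, which is exactly the displayed target. To make this rigorous I would rescale, setting $s=r_\theta u$ with $r_\theta=\theta^{1/(\rho-1)}$, and
\begin{equation*}
  f_\theta(u)=\frac{1-A_{Q_\theta}(t(r_\theta u))}{\theta r_\theta},
\end{equation*}
which converges uniformly on compacts to $u-K_\theta u^\rho$ thanks to the uniform lemma and the bounds $c_-\le C_\theta\le c_+$. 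The lower bound $1-A^\star\ge$ target follows immediately by evaluating at $u^\ast$. For the upper bound I cannot invoke Lemma~\ref{lem:concave-local-maximization} verbatim, because the limit is not quadratic, so I would redo its argument. First, $t\mapsto 1-A_{Q_\theta}(t)$ is concave by convexity of $A_Q$ (Lemma~\ref{lemma:properties-affinity}). The map $t\mapsto s$ is increasing, so if the function is strictly below its value at $s=0$ at $s=Ar_\theta$ for some large $A$ (which holds since $A-K_\theta A^\rho<0$ uniformly), concavity in $t$ confines the maximizer to $s\le Ar_\theta$. On this compact range uniform convergence then gives the matching upper bound.

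I expect the main obstacle to be this localization together with uniformity: making sure the $o(1)$ from Lemma~\ref{lem:uniform-tail-integration} is uniform in $\theta$ while $s$ ranges over $[0,Ar_\theta]$, and that the concavity argument in $t$ (rather than $s$) still excludes maximizers with $s$ of order one. A minor additional check is that $t<1$ near the optimum, so that the $t_Q=1$ case of Lemma~\ref{lem:hellinger-projection} does not arise.
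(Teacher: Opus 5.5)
Your proposal is correct and follows essentially the same route as the paper: the dual representation $H^2=1-\sqrt{A^\star_{Q_\theta}}$, integration by parts plus the uniform Abelian lemma (Lemma~\ref{lem:uniform-tail-integration}) for the remainder term, the evaluation $\rho\int_0^\infty u^{1-\rho}(1+u)^{-1}\,du=-\rho\pi/\sin(\pi\rho)$, rescaling by $\theta^{1/(\rho-1)}$, and convexity of $A_{Q_\theta}$ in $t$ to confine the minimizer to $[0,M\theta^{1/(\rho-1)}]$. The only real difference is your substitution $s=t/(1-t)$, which lets you apply the lemma directly to $G_\theta(u/s)$ instead of splitting on $\{X<1\}$ and $\{X>1\}$ and shifting to $G_\theta(x+1)$ as the paper does; this costs only $O(s^2)=o(s^\rho)$ terms from the factor $1+s$, and your closing check that $t<1$ is unnecessary because $H^2=1-\sqrt{A^\star_{Q_\theta}}$ holds in every case of Lemma~\ref{lem:hellinger-projection}.
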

\begin{proof}
  As in the proof of
  Proposition~\ref{proposition:regularly-varying-np-local}, we
  begin with the representation~\eqref{eq:hellinger-radius-dual}:
  \begin{equation*}
    H^2(Q_\theta,\nullbasic)
    =
    1-\sqrt{A_{Q_\theta}^\star}
    =
    1-\sqrt{\inf_{0\le t\le1}A_{Q_\theta}(t)}.
  \end{equation*}
  We claim that the following uniform approximation holds:
  \begin{align}
    \lim_{t\downarrow0}
    \sup_{0<\theta\le\theta_0}
    \left|
    \frac{
      A_{Q_\theta}(t)-1+\theta t
    }{
      C_\theta t^\rho
    }
    +
    \frac{\rho\pi}{\sin(\pi\rho)}
    \right|
    = 0.
    \label{eq:heavy-tail-inverse-curvature}
  \end{align}
  Deferring the proof of the limit~\eqref{eq:heavy-tail-inverse-curvature},
  we now use it to evaluate $A^\star_{Q_\theta}$, though
  we can no longer leverage the quadratic approximation
  in Lemma~\ref{lem:concave-local-maximization}.
  Rescaling and multiplying by $C_\theta t^\rho$,
  the limit~\eqref{eq:heavy-tail-inverse-curvature}
  implies that for each finite $M$,
  \begin{equation*}
    \lim_{\theta \downarrow 0} \sup_{0\le s\le M}
    \left|
    \frac{
      A_{Q_\theta}
      \left(\theta^{\frac{1}{\rho-1}}s\right)-1
    }{
      \theta^{\frac{\rho}{\rho-1}}
    }
    -
    \phi_\theta(s) \right|
    = 0
    ~~ \mbox{for} ~~
    \phi_\theta (s)
    :=
    -s-\frac{\rho\pi C_\theta}{\sin(\pi\rho)}s^\rho.
  \end{equation*}

  Because \(1 < \rho < 2\) and \(\sin(\pi\rho)<0\), $\phi_\theta$ is convex and coercive
  on $\Rb_+$ as $C_\theta \ge c_- > 0$, and has $\phi_\theta'(0) < 0$, so we may choose
  \(M\) large enough that the global minimum of \(\phi_\theta\) is attained in
  \([0,M]\) for all small $\theta > 0$.
  Consequently, for
  sufficiently small \(\theta\),
  \begin{equation*}
    A_{Q_\theta}
    \left(M\theta^{\frac{1}{\rho-1}}\right)
    >
    A_{Q_\theta}(0)
    =
    1.
  \end{equation*}
  Since \(A_{Q_\theta}\) is convex, it attains its global minimum 
  in \([0,M\theta^{1/(\rho-1)}]\), so  
  \begin{align*}
    \frac{
      A_{Q_\theta}^\star-1
    }{
      \theta^{\frac{\rho}{\rho-1}}
    }
    =
    \min_{0\le s\le M}
    \frac{
      A_{Q_\theta}
      \left(\theta^{\frac{1}{\rho-1}}s\right)-1
    }{
      \theta^{\frac{\rho}{\rho-1}}
    }
    =
    \inf_{s\ge0}\phi_\theta(s) + o_\theta(1).
  \end{align*}
  Differentiating \(\phi_\theta\) shows that
  \begin{align*}
    \argmin_{s \ge 0} \phi_\theta(s)
    &=
    \left(
    -\frac{\rho^2\pi C_\theta}{\sin(\pi\rho)}
    \right)^{-\frac{1}{\rho-1}},
    ~~ \mbox{whence} ~~
    \inf_{s \ge 0} \phi_\theta(s) = -\frac{\rho-1}{\rho}
    \left(
    -\frac{\rho^2\pi C_\theta}{\sin(\pi\rho)}
    \right)^{-\frac{1}{\rho-1}}.
  \end{align*}
  Thus
  \begin{align*}
    1-A_{Q_\theta}^\star
    =
    \frac{\rho-1}{\rho}
    \left(
    -\frac{\rho^2\pi C_\theta}{\sin(\pi\rho)}
    \right)^{-\frac{1}{\rho-1}}
    \theta^{\frac{\rho}{\rho-1}}
    \cdot (1 + o(1))
  \end{align*}
  as $\theta \downarrow 0$.
  In particular, \(A_{Q_\theta}^\star\to1\), and $1 - \sqrt{z} =
  \frac{1-z}{2} + O((1-z)^2)$ as $z\to1$ gives
  the limit~\eqref{eq:regular-varying-hellinger}:
  \begin{equation*}
    H^2(Q_\theta, \nullbasic)
    =
    1-\sqrt{A_{Q_\theta}^\star}
    =
    \half(1 - A_{Q_\theta}^\star) \cdot (1 + o(1)).
  \end{equation*}

  We return to demonstrate the local uniform
  convergence~\eqref{eq:heavy-tail-inverse-curvature}.
  Adding and subtracting $\theta t$ gives
  \begin{equation*}
    A_{Q_\theta}(t)
    =
    \Eb_{Q_\theta}
    \left[
      \frac{1}{1+t(X-1)}
      \right]
    =
    1-\theta t
    +
    t^2
    \Eb_{Q_\theta}
    \left[
      \frac{(X-1)^2}{1+t(X-1)}
      \right].
  \end{equation*}
  The limit~\eqref{eq:heavy-tail-inverse-curvature} is thus equivalent
  to showing that
  \begin{align*}
    \lim_{t\downarrow0}
    \sup_{0<\theta\le\theta_0}
    \left|
    \frac{t^{2-\rho}}{C_\theta}
    \Eb_{Q_\theta}
    \left[
      \frac{(X-1)^2}{1+t(X-1)}
      \right]
    +
    \frac{\rho\pi}{\sin(\pi\rho)}
    \right|
    =0.
  \end{align*}
  We split
  \begin{multline*}
    t^2
    \Eb_{Q_\theta}
    \left[
      \frac{(X-1)^2}{1+t(X-1)}
      \right]
    =
    \Eb_{Q_\theta}
    \left[
      \frac{\{t(X-1)\}^2}{1+t(X-1)}
      \one\{X<1\}
      \right]
    +
    \Eb_{Q_\theta}
    \left[
      \frac{\{t(X-1)\}^2}{1+t(X-1)}
      \one\{X>1\}
      \right].
  \end{multline*}
  and bound the two terms separately.
  First consider the term on \(X < 1\).
  For \(t\le1/2\), uniformly in \(\theta\), we have
  \begin{align*}
    0
    &\le
    \Eb_{Q_\theta}
    \left[
      \frac{\{t(X-1)\}^2}{1+t(X-1)}
      \one\{X<1\}
      \right]
    \le
    \frac{t^2}{1-t}
    =
    O(t^2)
    =
    o(t^\rho).
  \end{align*}

  To bound the term involving \(\{X>1\}\), let
  \(f(u):=u^2/(1+u)\) for \(u\ge0\) and
  $G_\theta(x) = Q_\theta(X > x) = 1 - Q_\theta(X \le x)$ be the complementary
  CDF.
  Then using the change of variables
  $u = t (x - 1)$,
  \begin{align*}
    \Eb_{Q_\theta}
    \left[
      \frac{\{t(X-1)\}^2}{1+t(X-1)}
      \one\{X>1\}
      \right]
    & =
    \Eb_{Q_\theta}
    \left[
      f\{t(X-1)\}\one\{X>1\}
      \right]
    \\
    & =
    -\int_0^\infty
    f(u)\,d\left\{
    G_\theta\left(1+\frac{u}{t}\right)
    \right\}.
  \end{align*}
  For fixed \(t>0\), the tail condition~\eqref{eqn:pareto-tail}
  implies that \(G_\theta\left(1+\frac{u}{t}\right) =
  O(u^{-\rho})\)  as \(u\to\infty\), uniformly in $\theta$ near 0.
  Since \(f(u) \le u\) and \(\rho>1\), \(f(u)G_\theta\left(1+\frac{u}{t}\right)
  = O(u^{1-\rho}) \to 0.\)
  Because \(f(0)=0\),
  \begin{equation*}
    \Eb_{Q_\theta}
    \left[
      \frac{\{t(X-1)\}^2}{1+t(X-1)}
      \one\{X>1\}
      \right]
    =
    \int_0^\infty
    f'(u)G_\theta\left(1+\frac{u}{t}\right)\,du
  \end{equation*}
  by integration by parts.
  We apply \Cref{lem:uniform-tail-integration} to
  \(x \mapsto G_\theta(x+1)\) with \(r=f'\).
  The tail condition~\eqref{eqn:pareto-tail}, together with
  \((x/(x+1))^\rho\to1\), implies that
  \begin{align*}
    \lim_{x\to\infty}
    \sup_{0<\theta\le\theta_0}
    \left|
    \frac{G_\theta(x+1)}
         {C_\theta x^{-\rho}}
         -1
         \right|
         =0.
  \end{align*}
  Moreover,
  \(f'(u)=1-(1+u)^{-2}=O(u)\) as \(u\downarrow0\), and
  \(f'(u)=O(1)\) as \(u\to\infty\).
  So \(\int_1^\infty |f'(u)|u^{-\rho}\,du<\infty\),
  and \Cref{lem:uniform-tail-integration} gives
  \begin{align*}
    \lim_{t\downarrow0}
    \sup_{0<\theta\le\theta_0}
    \left|
    \frac{1}{C_\theta t^\rho}
    \int_0^\infty
    f'(u)G_\theta\left(1+\frac{u}{t}\right)\,du
    -
    \int_0^\infty f'(u)u^{-\rho}\,du
    \right|
    =0.
  \end{align*}

  We thus evaluate the integral
  $\int f'(u) u^{-\rho} d\rho$.
  As \(f(u)u^{-\rho}=u^{2-\rho}/(1 + u)\to0\) as \(u \to \{0, \infty\}\),
  a second integration by parts yields
  \begin{equation*}
    \int_0^\infty f'(u)u^{-\rho}\,du
    =
    \rho
    \int_0^\infty
    f(u)u^{-\rho-1}\,du
    =
    \rho
    \int_0^\infty
    \frac{u^{1-\rho}}{1+u}\,du
    =
    -\frac{\rho\pi}{\sin(\pi\rho)}.
  \end{equation*}
  Combining the bounds for the \(\{X<1\}\) and \(\{X>1\}\) terms proves
  \eqref{eq:heavy-tail-inverse-curvature}.
\end{proof}

\end{document}